\documentclass[12pt]{amsart}
\usepackage{amscd,amsmath,amsthm,amssymb}
\usepackage[left]{lineno}
\usepackage{color}
\usepackage{stmaryrd}
\usepackage[utf8]{inputenc}
\usepackage{cleveref}
\usepackage{epstopdf}
\usepackage{graphicx}
\usepackage{xcolor}

\usepackage{tikz}

\definecolor{verylight}{gray}{0.97}
\definecolor{light}{gray}{0.9}
\definecolor{medium}{gray}{0.85}
\definecolor{dark}{gray}{0.6}
\newcommand\calP{\mathcal{P}}
 %
 %
 %
 \def\NZQ{\mathbb}               

 \def\KK{{\NZQ K}}
 
 %
 %

\def\KK{{\NZQ K}}

 %

 \def\G{{\mathcal G}}

 %

 \def\0b{{\mathbf 0}}

\def\reg{{\mathbf reg}}
\def\height{\operatorname{ht}}
\def\depth{\operatorname{depth}}
 \def\opn#1#2{\def#1{\operatorname{#2}}} 
 %
 \opn\chara{char} \opn\length{\ell} \opn\pd{pd} \opn\rk{rk}
 \opn\projdim{proj\,dim} \opn\injdim{inj\,dim} \opn\rank{rank}
 \opn\depth{depth} \opn\grade{grade} \opn\height{height}
 \opn\embdim{emb\,dim} \opn\codim{codim}
 
 \opn\Tr{Tr} \opn\bigrank{big\,rank}
 \opn\superheight{superheight}\opn\lcm{lcm}
 \opn\trdeg{tr\,deg}
 \opn\reg{reg} \opn\lreg{lreg} \opn\ini{in} \opn\lpd{lpd}
 \opn\size{size} \opn\sdepth{sdepth}
 \opn\link{link}\opn\fdepth{fdepth}\opn\lex{lex}
 \opn\tr{tr}
 \opn\type{type}
 \opn\gap{gap}
 \opn\arithdeg{arith-deg}
 \opn\HS{HS}
 \opn\GL{GL}
 %
 \opn\div{div} \opn\Div{Div} \opn\cl{cl} \opn\Cl{Cl}
 %
 %
 \opn\Spec{Spec} \opn\Supp{Supp} \opn\supp{supp} \opn\Sing{Sing}
 \opn\Ass{Ass} \opn\Min{Min}\opn\Mon{Mon}
 %
 %
 \opn\Ann{Ann} \opn\Rad{Rad} \opn\Soc{Soc}\opn\Deg{Deg}
 %
 %
 \opn\Im{Im} \opn\Ker{Ker} \opn\Coker{Coker} \opn\Am{Am}
 \opn\Hom{Hom} \opn\Tor{Tor} \opn\Ext{Ext} \opn\End{End}
 \opn\Aut{Aut} \opn\id{id}
 
 \opn\nat{nat}
 \opn\pff{pf}
 \opn\Pf{Pf} \opn\GL{GL} \opn\SL{SL} \opn\mod{mod} \opn\ord{ord}
 \opn\Gin{Gin} \opn\Hilb{Hilb}\opn\sort{sort}
 \opn\PF{PF}\opn\Ap{Ap}
 \opn\mult{mult}
 \opn\bight{bight}
 %
 %
 \opn\aff{aff}
 \opn\relint{relint} \opn\st{st}
 \opn\lk{lk} \opn\cn{cn} \opn\core{core} \opn\vol{vol}  \opn\inp{inp} \opn\nilpot{nilpot}
 \opn\link{link} \opn\star{star}\opn\lex{lex}\opn\set{set}
 \opn\width{wd}
 \opn\Fr{F}
 \opn\QF{QF}
 \opn\G{G}
 \opn\type{type}\opn\res{res}
 \opn\conv{conv}
 \opn\Ind{Ind}
 \opn\gr{gr}
 
 %
 %
 
 \def\pot#1#2{#1[\kern-0.28ex[#2]\kern-0.28ex]}

 %
 %
 \opn\dirlim{\underrightarrow{\lim}}
 \opn\inivlim{\underleftarrow{\lim}}
 %
 %
 %

 %
 %

 \def\Implies{\ifmmode\Longrightarrow \else
         \unskip${}\Longrightarrow{}$\ignorespaces\fi}
 \def\implies{\ifmmode\Rightarrow \else
         \unskip${}\Rightarrow{}$\ignorespaces\fi}
 \def\iff{\ifmmode\Longleftrightarrow \else
         \unskip${}\Longleftrightarrow{}$\ignorespaces\fi}

 \let\:=\colon
 \newtheorem{Theorem}{Theorem}[section]
 \newtheorem{Lemma}[Theorem]{Lemma}
 \newtheorem{Corollary}[Theorem]{Corollary}
 
 \newtheorem{Remark}[Theorem]{Remark}

 \newtheorem{Definition}[Theorem]{Definition}

\newtheorem{Setting}[Theorem]{Setting}

 %
 \let\epsilon\varepsilon
 \let\kappa=\varkappa
 %
 %
 \textwidth=15cm \textheight=22cm \topmargin=0.5cm
 \oddsidemargin=0.5cm \evensidemargin=0.5cm \pagestyle{plain}
 %
 %
 \def\qed{\ifhmode\textqed\fi
       \ifmmode\ifinner\quad\qedsymbol\else\dispqed\fi\fi}
 \def\textqed{\unskip\nobreak\penalty50
        \hskip2em\hbox{}\nobreak\hfil\qedsymbol
        \parfillskip=0pt \finalhyphendemerits=0}
 \def\dispqed{\rlap{\qquad\qedsymbol}}
 
 %
 \opn\dis{dis}
 \def\pnt{{\raise0.5mm\hbox{\large\bf.}}}
 
 \opn\Lex{Lex}

 


\begin{document}
	\title {Regularity of powers of  edge ideals of  edge-weighted integrally closed cycles}
	
	\author {Guangjun Zhu$^{\ast}$, Yijun Cui, Jiaxin Li and Yi Yang}

	\address{School of Mathematical Sciences, Soochow University, Suzhou, Jiangsu, 215006, P. R. China}

	\email{ zhuguangjun@suda.edu.cn(Corresponding author:Guangjun Zhu),\linebreak[4]
		237546805@qq.com(Yijun Cui), lijiaxinworking@163.com(Jiaxin Li), 3201088194@qq.com(Yi Yang).}
	
	\thanks{$^{\ast}$ Corresponding author}

\thanks{2020 {\em Mathematics Subject Classification}.
Primary 13B22, 13F20; Secondary 05C99, 05E40}

\thanks{Keywords:   edge-weighted graph, edge ideal}

	

	
	\maketitle
\begin{abstract}
 This paper gives exact formulas for the regularity  of powers of edge ideals of an edge-weighted  integrally closed cycle.
\end{abstract}
	\setcounter{tocdepth}{1}

    \section{Introduction}
 Let $G$ be a graph with the vertex set $V(G)$ and  the edge set $E(G)$.
 We write $xy$ for  an  edge of $G$ with $x$ and $y$ as endpoints.
Suppose ${\bf w}: E(G)\rightarrow \mathbb{Z}_{>0}$ is an edge weight function on $G$. We write $G_{\bf w}$ for the pair $(G,{\bf w})$ and call it an {\em edge-weighted} graph  (or simply weighted graph) with the underlying graph  $G$. For a weighted graph $G_{\bf w}$ with the vertex set $V(G)=\{x_1,\ldots,x_n\}$, its {\em edge-weighted ideal}  (or simply edge ideal), was introduced in \cite{PS}, is the ideal of the polynomial ring $S=\KK[x_{1},\dots, x_{n}]$ in $n$ variables over a field $\KK$ given by
\[
I(G_{\bf w})=(x^{{\bf w}(xy)}y^{{\bf w}(xy)}\mid xy\in E(G_{\bf w})).
\]
If ${\bf w}$ is the constant function defined by ${\bf w}(e)=1$ for all $e\in E(G)$, then  $I(G_{\bf w})$ is the classical  edge ideal of the underlying graph $G$ of $G_{\bf w}$, which
has been  studied extensively in the literature \cite{Ba,BHT,FM,HT,MRW,MV,M,Zh1,Zh2}.

 Recently, there has been a surge of interest in characterizing weights for which the edge ideal of a weighted graph is Cohen-Macaulay. For example,
Paulsen and Sather-Wagstaff in \cite{PS}  classified Cohen-Macaulay  weighted graphs $G_{\bf w}$ where the underlying
graph $G$ is a  cycle, a tree, or a  complete graph.  Seyed Fakhari et al. in \cite{FSTY} continued this study, classifying a
Cohen-Macaulay  weighted graph $G_{\bf w}$ when $G$ is a very well-covered graph. Recently, Diem et al. in \cite{DMV} gave a  complete  characterization of sequentially Cohen-Macaulay edge-weighted graphs. In \cite{W},  Wei classified all Cohen-Macaulay weighted chordal graphs from a purely graph-theoretic point of view. Hien in \cite{Hi}
classified Cohen-Macaulay edge-weighted graphs $G_{\bf w}$ when $G$ has girth at least $5$.

 The study of edge ideals of weighted graphs is much more recent and consequently there are  fewer results in this direction.
In this paper,  we are interested in  the regularity   of powers of the edge ideal of a weighted  integrally closed $n$-cycle $C_{\bf w}^n$. Recall that the regularity is a central invariant associated to a homogeneous ideal  $I$. It is well known that $\reg(I^t)$ is asymptotically a linear function for $t\gg  0$, i.e., there exist constants $a$,  $b$ and a positive integer $t_0$ such that for all $t\geq t_0$, $\mbox{reg}\,(I^t)=at+b$ (see \cite{CHT,K}). In this context, it has been of interest to find the exact form of this linear function and to determine  the stabilization index $t_0$ at  which $\reg(I^t)$ becomes linear (see \cite{Ba,BHT}). It turns out that even in the case of monomial ideals it is difficult to find the linear function and $t_0$ (see \cite{Con}).

Integral closure and normality of monomial ideals  play an important role in the multiplicity theory and it is also an interesting topic.
In \cite{DZCL}, Duan and the first three authors of this article gave a  complete  characterization of an integrally closed edge-weighted graph  $G_{\bf w}$ and  showed that if its underlying
graph $G$ is a  star graph,  a  path, or a cycle, then  $G_{\bf w}$ is normal. The regularity  of powers of edge ideals  has been  computed for some weighted graphs, such as star graphs, integrally closed paths, and integrally closed trees(see \cite{ZDCL,LZD}).
In this article, we will give exact formulas for the regularity of powers of edge ideals of a weighted  integrally closed cycle. Our main results are as follows.

\begin{Theorem}
  Let  $n\ge 3$ be an integer and $C_{{\bf w}}^n$  be an integrally closed  $n$-cycle, then
 \[
\reg (S/I(C_{{\bf w}}^n)^t)=2\omega t+\lfloor \frac{n}{3} \rfloor-2
\]
where $\omega=\max\{{\bf w}_1,\ldots,{\bf w}_n\}$.
\end{Theorem}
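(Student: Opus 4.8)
\emph{Overview.} The plan is to establish the two inequalities
$\reg(S/I^t)\ge 2\omega t+\lfloor n/3\rfloor-2$ and
$\reg(S/I^t)\le 2\omega t+\lfloor n/3\rfloor-2$ separately, where I abbreviate $I:=I(C_{\bf w}^n)$ and let $e^{\ast}$ be an edge of maximal weight $\omega$. The characterization of integrally closed weighted cycles from \cite{DZCL} is used throughout: it forces the edges of weight $\ge 2$ to be tightly clustered along the cycle (no two of them may be separated by a long arc), and this is precisely what prevents two heavy edges from lying in a common induced matching. That clustering is the structural fact which makes the single number $\omega$ govern the slope $2\omega$, while the cycle contributes the combinatorial term $\lfloor n/3\rfloor$.

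\emph{Lower bound.} First I would produce an induced matching $M=\{e^{\ast},f_2,\dots,f_k\}$ of $C_{\bf w}^n$ of size $k=\lfloor n/3\rfloor$ that contains the maximal-weight edge $e^{\ast}$ and whose remaining edges $f_j$ all have weight $1$; existence is a purely combinatorial statement about $C_n$ together with the clustering of heavy edges guaranteed by \cite{DZCL}. Restricting to the induced subgraph on $V(M)$ replaces $I$ by a complete intersection $J=(g_1,\dots,g_k)$ with $\deg g_1=2\omega$ and $\deg g_2=\cdots=\deg g_k=2$. Since passing to such a restriction does not increase the regularity of any power, and since for a complete intersection
\[
\reg\big(S/J^t\big)=(t-1)\max_i\deg g_i+\sum_{i=1}^{k}(\deg g_i-1)=2\omega t+\lfloor n/3\rfloor-2,
\]
the desired lower bound follows at once.

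\emph{Upper bound.} Here I would induct on $t$. The base case $t=1$ amounts to computing $\reg(S/I)=2\omega+\lfloor n/3\rfloor-2$, which I would do either by polarizing the (one or two) heavy edges and reading off the answer from Hochster's formula, or, more conveniently, from a short exact sequence relating $I$ to the edge ideal of the integrally closed weighted path obtained by deleting $e^{\ast}$, whose regularity is known from \cite{LZD}; the residue $n\equiv 2\pmod 3$ needs extra care at this stage. For the inductive step I would invoke the weighted analogue of Banerjee's colon technique \cite{Ba,BHT}: writing $I^{t+1}=I\cdot I^t$ one obtains a recursion of the shape
\[
\reg(S/I^{t+1})\le\max\Big\{\reg(S/I^t),\ \max_{g}\big(\reg\big(S/(I^{t+1}:g)\big)+2\omega\big)\Big\},
\]
where $g$ ranges over the minimal monomial generators of $I^t$ and the degree shift is bounded by $2\omega$ because every generator of $I$ has degree at most $2\omega$. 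Granting the key claim $\reg\big(S/(I^{t+1}:g)\big)\le\reg(S/I^t)$ for every such $g$, the recursion collapses to $\reg(S/I^{t+1})\le\reg(S/I^t)+2\omega$, and the induction closes.

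\emph{Main obstacle.} The real difficulty is controlling the colon ideals $(I^{t+1}:g)$ in the weighted setting. In contrast with the unweighted case, even-connection here produces new generators whose exponents depend on the weights along the connecting walks, so $(I^{t+1}:g)$ is no longer literally an edge ideal. The integral-closedness hypothesis is essential precisely to force these colon ideals to reduce, after splitting off the variables occurring in $g$, to edge ideals of integrally closed weighted paths and shorter subcycles, for which \cite{LZD} and the inductive hypothesis bound the regularity by $\reg(S/I^t)$. Matching the even-connected generators to the clustered heavy-edge structure imposed by \cite{DZCL}, and thereby verifying the inequality $\reg\big(S/(I^{t+1}:g)\big)\le\reg(S/I^t)$, is where essentially all of the work lies; once it is in place, both the lower bound and the outer recursion are routine.
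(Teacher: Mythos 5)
Your lower-bound half is essentially sound, and it is a genuinely different route from the paper: restricting to an induced matching of size $\lfloor n/3\rfloor$ containing the heavy edge (integral closedness, via Lemma \ref{integral}(2), does guarantee the remaining matching edges are trivial) and invoking the complete-intersection formula gives $\reg(S/I^t)\ge 2\omega t+\lfloor n/3\rfloor-2$, whereas the paper never proves a separate lower bound at all — it extracts equality directly from the equality case of Lemma \ref{exact}. You would, however, need to supply the restriction inequality $\reg(I(H_{\bf w})^t)\le \reg(I(G_{\bf w})^t)$ for induced subgraphs of weighted graphs and the mixed-degree complete-intersection formula, since the paper's Lemma \ref{regular} covers only equigenerated regular sequences.

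The genuine gap is in the upper bound. Your recursion
\[
\reg(S/I^{t+1})\le\max\Big\{\reg(S/I^t),\ \max_{g}\big(\reg\big(S/(I^{t+1}:g)\big)+2\omega\big)\Big\},
\quad g\in\mathcal{G}(I^t),
\]
is not what the Banerjee filtration yields. Writing $Q_i=(I^{t+1},g_1,\ldots,g_i)$ with $Q_r=I^t$, each short exact sequence $0\to S/(Q_{i-1}:g_i)(-\deg g_i)\to S/Q_{i-1}\to S/Q_i\to 0$ carries the twist $\deg g_i$, and for a weighted cycle $\deg g_i$ ranges from $2t$ up to $2\omega t$ (e.g.\ $g=(x_1x_2)^{\omega t}$); your justification that the shift is $2\omega$ "because every generator of $I$ has degree at most $2\omega$" conflates generators of $I$ with generators of $I^t$. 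With the correct twists, your key claim $\reg(S/(I^{t+1}:g))\le\reg(S/I^t)$ only gives $\reg(S/I^{t+1})\le\reg(S/I^t)+2\omega t$, which is far too weak; what is actually needed is the generator-dependent bound $\reg(S/(I^{t+1}:g))\le 2\omega(t+1)+\lfloor n/3\rfloor-2-\deg g$, which for $g=(x_1x_2)^{\omega t}$ is a bound by the constant $2\omega+\lfloor n/3\rfloor-2$, independent of $t$. On top of this, the control of these colon ideals — which you correctly identify as "where essentially all of the work lies" — is entirely deferred, and there is no weighted even-connection theorem to fall back on, so the proposal omits rather than proves the core of the argument. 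The paper's actual strategy sidesteps $I^{t+1}:g$ completely: for cycles with at least two non-trivial edges it inducts on $t$ using colons by one or two variables, powered by the identity $I^t:x_{i+1}x_{i+2}=I^{t-1}$ (Lemma \ref{cyclecolon2}); for the hardest case of exactly one non-trivial edge it orders the generators of $I^t$ itself (Setting \ref{1-edge-weighted}, Theorem \ref{unique}) and peels them off one at a time, proving via Theorems \ref{Li} and \ref{Ji} that every successive colon $(J_i:L_i^{(t)})$ is a sum of variables and a trivial path ideal with regularity at most $\lfloor n/3\rfloor-1$, so that the large twists $s_i\le 2\omega t$ are absorbed exactly. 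Some replacement for that structural analysis is indispensable, and the proposal does not contain one.
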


The article is organized as follows. In Section \ref{sec:prelim}, we collect  some of essential definitions and terminology needed later.
In Section \ref{sec:cycle},  by choosing different exact sequences and repeatedly using Lemma \ref{exact},  we  give some exact formulas for the regularity  of the edge ideal of a non-trivial integrally closed  $n$-cycle.
 In Section \ref{sec:power},  when $C_{\bf w}^n$ is an $n$-cycle with the weight of only one edge is non-trivial, we provide a special order on the set of minimal monomial generators of the powers of its edge ideal.
Using this order and some exact sequences, we give formulas for the regularity of the powers of these edge ideals.

\section{Preliminaries}
\label{sec:prelim}

In this section, we provide the definitions and basic facts that  will be used throughout this paper.
For detailed information, we refer to \cite{BH} and \cite{HH}.

 Let $G$ be a simple graph with the vertex set $V(G)$ and the edge set $E(G)$. For any subset $A$ of $V(G)$, $G[A]$ denotes the \emph{induced subgraph} of $G$ on the  set $A$, i.e.,
  for any $u,v \in A$, $uv \in E(G[A])$ if and only if $uv\in E(G)$.
  For each vertex $v\in V(G)$, its \emph{neighborhood} is defined as $N_G(v)\!:=\{u \in V(G)\mid uv\in E(G)\}$.
A connected graph $G$ is called a \emph{cycle} if $\deg_{G}(v)=2$ for all $v\in V(G)$. A cycle with $n$ vertices is denoted by $C_n$. A connected graph with vertex set $\{x_1,\ldots,x_n\}$, is called a \emph{path}, if $E(G)=\{x_ix_{i+1} \mid 1\leq i\leq n-1\}$. Such a path is usually denoted by $P_n$.

Given a weighted graph $G_{\bf w}$,  we denote its vertex and edge sets by $V(G_{\bf w})$ and $E(G_{\bf w})$, respectively. Any concept valid for graphs automatically applies to weighted graphs.
 For example, the \emph{neighborhood} of a vertex $v$ in a weighted graph $G_{\bf w}$ with the underlying graph  $G$ is defined as  $N_G(v)\!:=\{u \in V(G)\mid uv\in E(G)\}$. Given  a subset $W$ of $V(G_{\bf w})$, its \emph{neighborhood} is defined as  $N_G(W)\!:=\bigcup\limits_{v\in W} N_G(v)$.

The \emph{induced subgraph} on the set $A$ in $G_{\bf w}$ is the graph $G_{\bf w}[A]$ with vertex set $A$, and for any  $u,v\in A$,
 $uv\in E(G_{\bf w}[A])$ if and only if $uv\in E(G_{\bf w})$, and the weight function ${\bf w}'$ satisfies ${\bf w}'(uv)={\bf w}(uv)$.
A weighted  graph is said to be  \emph{non-trivial} if there is at least one edge with  a weight  greater than $1$, otherwise, it is said to be
{\em trivial}. An edge $e \in E(G_{\bf w})$ is said to be {\em non-trivial} if its weight ${\bf w}(e) \ge2$. Otherwise, it is said to be {\em trivial}.

\medskip
For any homogeneous ideal $I$ of the polynomial ring  $S=k[x_{1},\dots,x_{n}]$,
there exists a  finite minimal graded free resolution
$$0\rightarrow \bigoplus\limits_{j}S(-j)^{\beta_{p,j}(I)}\rightarrow \bigoplus\limits_{j}S(-j)^{\beta_{p-1,j}(I)}\rightarrow \cdots\rightarrow \bigoplus\limits_{j}S(-j)^{\beta_{0,j}(I)}\rightarrow I\rightarrow 0,$$
where $p\leq n$, and $S(-j)$ is the $S$-module obtained by shifting
the degrees of $S$ by $j$. The number
$\beta_{i,j}(I)$, the $(i,j)$-th graded Betti number of $I$, is
an invariant of $I$ that equals the number of minimal generators of degree $j$ in the
$i$th syzygy module of $I$. The regularity of $I$,  denoted by  $\mbox{reg}\,(I)$, is defined as
$$\mbox{reg}\,(I):=\mbox{max}\,\{j-i\ |\ \beta_{i,j}(I)\neq 0\},$$
which measures the size of the minimal graded
free resolution of $I$.
By looking at the minimal free resolution, it is easy to see that $\reg(I)=\reg(S/I)+1$.

The following lemmas are often used for computing  regularity  of a module.

\begin{Lemma}
	\label{product}{\em (\cite[Theorem 2.5]{CH})}
Let $R=\KK[x_{1},\dots,x_{n}]$ be a polynomial ring and  $I\subset S$ be a graded ideal with $\dim(R/I)\leq1$. Then $\reg(IM)\leq\reg(I)+\reg(M)$ for any finitely generated graded $R$-module $M$.
\end{Lemma}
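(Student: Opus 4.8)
The plan is to reduce the product estimate to a tensor‑product (equivalently, a local‑cohomology) bound for the single module $M/IM$, and then to use the dimension hypothesis on $V(I)$. First I would use only the short exact sequence
\[
0\To IM\To M\To M/IM\To 0 .
\]
The standard behaviour of regularity along a short exact sequence gives
\[
\reg(IM)\le \max\{\reg(M),\ \reg(M/IM)+1\}.
\]
Since $\reg(I)\ge 0$ and $\reg(R/I)=\reg(I)-1$, the term $\reg(M)$ is harmless, and it suffices to establish the single inequality
\[
\reg(M/IM)\le \reg(R/I)+\reg(M).
\]
Here $M/IM=M\otimes_{R}R/I$ is a finitely generated graded $R/I$-module, and $\dim(R/I)\le 1$ forces $\dim(M/IM)\le 1$, so that only $H^{0}_{\mm}(M/IM)$ and $H^{1}_{\mm}(M/IM)$ can be nonzero; these two modules are exactly what control $\reg(M/IM)$.

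Next I would pass to an infinite base field, which alters neither regularity nor the relevant dimensions, and argue by induction on $\dim(R/I)$. The base case $\dim(R/I)=0$ is elementary: then $R/I$ is Artinian with top nonzero degree equal to $\reg(R/I)$, while $M/IM$, being a quotient of $M$, is generated in degrees at most the maximal degree of a minimal generator of $M$, hence in degrees at most $\reg(M)$; therefore $M/IM$ vanishes above degree $\reg(M)+\reg(R/I)$, and being of finite length its regularity equals its top nonzero degree. For the step $\dim(R/I)=1$ I would pick a generic linear form $\ell$, so that $\ell$ is filter‑regular for $N:=M/IM$ and $R/(I+\ell R)$ is Artinian with $\reg(R/(I+\ell R))\le\reg(R/I)$. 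Identifying $N/\ell N=\bar M/\bar I\bar M$ over $\bar R=R/(\ell)$, the base case applied over $\bar R$ yields $\reg(N/\ell N)\le\reg(M)+\reg(R/I)$, and a Bayer--Stillman type comparison reduces $\reg(N)$ to $\reg(N/\ell N)$ together with a correction coming from the finite‑length submodule $0:_{N}\ell$, equivalently from $H^{0}_{\mm}(N)$.

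The main obstacle is precisely this correction term: bounding the top degree of $0:_{N}\ell$ (equivalently the top degrees of $H^{0}_{\mm}(M/IM)$ and $H^{1}_{\mm}(M/IM)$) by $\reg(M)+\reg(R/I)$. This is the one place where $\dim(R/I)\le 1$ is used essentially rather than for bookkeeping, and where the inequality would fail for larger dimension, since the unconditional tensor bound $\reg(M\otimes_{R}N)\le\reg(M)+\reg(N)$ is false in general. An alternative route to the same inequality runs through an iterated mapping‑cone argument applied to $\mathbb{F}\otimes_{R}M$, where $\mathbb{F}$ is the minimal graded free resolution of $R/I$: the free terms $F_{i}\otimes_{R}M$ are bounded immediately by $\reg(M)+\reg(R/I)+i$, and the entire difficulty again concentrates in bounding $\reg(\Tor_{i}^{R}(R/I,M))$ for $i\ge 1$, these being supported on $V(I)$ and hence of dimension at most $1$.
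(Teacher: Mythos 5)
You should first be aware that the paper contains no proof of this lemma: it is quoted verbatim from Conca and Herzog \cite[Theorem 2.5]{CH}, so the only meaningful comparison is with their original argument. Your reduction is sound as far as it goes. The sequence $0\to IM\to M\to M/IM\to 0$ does give $\reg(IM)\le\max\{\reg(M),\reg(M/IM)+1\}$; since $\reg(I)=\reg(R/I)+1\ge 1$ for a nonzero proper graded ideal, the lemma is indeed equivalent to the tensor bound $\reg(M\otimes_R R/I)\le\reg(M)+\reg(R/I)$. Your base case $\dim(R/I)=0$ is correct (a finite-length module generated in degrees at most $\reg(M)$ over an Artinian quotient vanishing above degree $\reg(R/I)$), and the filter-regular hyperplane-section machinery you set up for $\dim(R/I)=1$ is standard and correctly stated.

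However, the proposal is not a proof, and you say so yourself: the inequality $\operatorname{end}\bigl(H^{0}_{\mm}(M/IM)\bigr)\le\reg(M)+\reg(R/I)$ (equivalently the bound on $0:_{N}\ell$, equivalently the bounds on $\reg(\Tor_i^R(R/I,M))$ in your mapping-cone variant) is flagged as ``the main obstacle'' and never established. That inequality is not bookkeeping; it is the entire content of the theorem. Everything you do prove — the reduction, the base case, the Bayer--Stillman type comparison — is formal, whereas the correction term is exactly where every known proof does real work. Note that straightforward estimates do not close it: bounding $\Tor_1(M,R/(I^{\mathrm{sat}}+\ell))$ as a subquotient of $F_1\otimes R/(I^{\mathrm{sat}}+\ell)$, say, loses an additive constant and yields only $\reg(IM)\le\reg(I)+\reg(M)+1$, strictly weaker than the statement, while subquotient bounds inside $G_i\otimes M$ fail outright because the ambient module has unbounded top degree. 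Conca and Herzog close precisely this gap by a sharper argument (closely related to the later theorems of Caviglia and of Eisenbud--Huneke--Ulrich bounding $\reg(\Tor_i(M,N))$ by $\reg(M)+\reg(N)+i$ when $\dim\Tor_1(M,N)\le 1$), and some such input — for instance a careful analysis via $0\to H^{0}_{\mm}(R/I)\to R/I\to R/I^{\mathrm{sat}}\to 0$ exploiting Cohen--Macaulayness of $R/I^{\mathrm{sat}}$ in dimension one — is indispensable. As it stands, your proposal is a correct reduction of the lemma to its hard core, not a proof of it.
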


\begin{Lemma}  {\em (\cite[Lemmas 2.1 and 3.1]{HT})}
	\label{exact}
Let $0\longrightarrow M\longrightarrow N\longrightarrow P\longrightarrow 0$ be a short exact
	sequence of finitely generated graded S-modules. Then we have
	\begin{itemize}
		\item[(1)] $\reg(N)\leq max\{\reg(M), \reg(P)\}$, the equality holds if $\reg(P) \neq \reg(M)-1$.
	\item[(2)] $\reg(P)\leq max\{\reg(M)-1, \reg(N)\}$, the  equality holds if $\reg(M) \neq \reg(N)$.
	\end{itemize}
\end{Lemma}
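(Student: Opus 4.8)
The plan is to reduce the statement to elementary arithmetic on the degrees occurring in a single long exact sequence of Tor modules, using the characterization of regularity equivalent to the Betti-number definition recalled above. For a finitely generated graded $S$-module $L$ one has
\[
\reg(L)=\max\{\,j-i \mid \Tor_i^S(L,\KK)_j\neq 0\,\},
\]
since $\beta_{i,j}(L)=\dim_{\KK}\Tor_i^S(L,\KK)_j$. First I would apply the derived functor $\Tor_\bullet^S(-,\KK)$ to the given short exact sequence, obtaining in each internal degree $j$ the long exact sequence
\[
\cdots\to \Tor_i^S(M,\KK)_j\to \Tor_i^S(N,\KK)_j\to \Tor_i^S(P,\KK)_j\to \Tor_{i-1}^S(M,\KK)_j\to\cdots,
\]
all of whose maps are degree preserving because the maps of the original sequence have degree $0$. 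The entire argument then runs off the three-term exact pieces of this sequence.

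Next I would extract three numerical inequalities. Write $r_M=\reg(M)$, $r_N=\reg(N)$, $r_P=\reg(P)$. If $\Tor_i^S(N,\KK)_j\neq 0$, then by exactness this class either maps nontrivially into $\Tor_i^S(P,\KK)_j$ or is the image of a class in $\Tor_i^S(M,\KK)_j$; in the two cases $j-i\le r_P$ or $j-i\le r_M$, giving $r_N\le\max\{r_M,r_P\}$, which is part $(1)$. The same bookkeeping applied to $\Tor_i^S(N,\KK)_j\to \Tor_i^S(P,\KK)_j\to \Tor_{i-1}^S(M,\KK)_j$ yields $r_P\le\max\{r_M-1,r_N\}$, which is part $(2)$. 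Applying it once more to $\Tor_{i+1}^S(P,\KK)_j\to \Tor_i^S(M,\KK)_j\to \Tor_i^S(N,\KK)_j$ gives the auxiliary inequality $r_M\le\max\{r_N,r_P+1\}$, which the statement does not assert but which I will need for the equality claims.

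Finally I would deduce the equalities purely formally from the three inequalities
\[
r_N\le\max\{r_M,r_P\},\qquad r_P\le\max\{r_M-1,r_N\},\qquad r_M\le\max\{r_N,r_P+1\},
\]
together with the two hypotheses. For part $(1)$, assuming $r_P\neq r_M-1$ and supposing $r_N<\max\{r_M,r_P\}$, a short split on whether the maximum is $r_M$ or $r_P$ uses the second and third inequalities to force $r_P\in\{r_M-1,r_M\}$; the excluded value and the strict inequality then yield a violation of the second inequality, so $r_N=\max\{r_M,r_P\}$. For part $(2)$, assuming $r_M\neq r_N$ and supposing $r_P<\max\{r_M-1,r_N\}$, splitting on which term realizes the maximum and feeding the result into the third and first inequalities produces $\max\{r_N,r_P+1\}<r_M$ or $\max\{r_M,r_P\}<r_N$, each contradicting one of the inequalities, so $r_P=\max\{r_M-1,r_N\}$.

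I expect the only real subtlety to be the equality statements. The inequalities themselves are immediate once the Tor long exact sequence is in place, but both equalities require the third, unstated inequality and a careful reading of the hypotheses $r_P\neq r_M-1$ and $r_M\neq r_N$: these are exactly the conditions ruling out the degenerate shift-by-one overlap in which a graded Tor class of $N$ (resp.\ $P$) cancels against a neighboring class and thereby lowers the regularity. Organizing the case analysis so that each branch terminates in a violation of one of the three inequalities is the part that needs care, though it involves no computation beyond comparing integers.
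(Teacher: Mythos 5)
Your argument is correct, but note that the paper itself contains no proof of this lemma: it is quoted verbatim from Hoa and Tam \cite[Lemmas 2.1 and 3.1]{HT}, so there is no internal proof to compare against. Relative to the cited source, your route is the natural alternative: the published proofs of such regularity lemmas are usually run through the local cohomology characterization $\reg(L)=\max\{i+j \mid H_{\mm}^i(L)_j\neq 0\}$ and the long exact sequence of $H_{\mm}^{\bullet}$, whereas you use the Betti-number characterization $\reg(L)=\max\{j-i\mid \Tor_i^S(L,\KK)_j\neq 0\}$ and the long exact sequence of $\Tor^S_{\bullet}(-,\KK)$. Over the polynomial ring these are equivalent, and your version is arguably better suited to this paper, since it works directly from the minimal free resolution definition of regularity given in Section 2 and needs no local duality. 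Your three inequalities are the right ones, and I checked that the case analysis closes: for (1), if $r_N<\max\{r_M,r_P\}$ with the maximum equal to $r_M$, the third inequality forces $r_P\geq r_M-1$, the hypothesis $r_P\neq r_M-1$ then gives $r_P=r_M$, and $r_P\leq\max\{r_M-1,r_N\}$ is violated; if the maximum is $r_P>r_M$, the second inequality is violated outright (no hypothesis needed in that branch). For (2), if $r_P<\max\{r_M-1,r_N\}$ with maximum $r_M-1$ one contradicts $r_M\leq\max\{r_N,r_P+1\}$, and with maximum $r_N$ the hypothesis $r_M\neq r_N$ reduces either to the previous branch or to a violation of $r_N\leq\max\{r_M,r_P\}$, exactly as you sketched. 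One small point worth making explicit in a final write-up: the $\Tor$ characterization and the long exact sequence require the maps in the given short exact sequence to be homogeneous of degree $0$ (which is the standing convention here), and the conventions $\reg(0)=-\infty$ should be invoked so the inequalities hold when a module vanishes.
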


Let $\mathcal{G}(I)$ denote the unique minimal set of monomial generators of a monomial ideal $I\subset S$ and let $u\in S$
be a monomial. We set $\supp(u)=\{x_i\!: x_i |u\}$. If $G(I)=\{u_1,\ldots,u_m\}$, we set $\supp(I)=\cup_{i=1}^{m}\supp(u_i)$.

\begin{Lemma}{\em (\cite[Lemma 2.2, Lemma 3.2]{HT})}
\label{sum1}
Let $S_{1}=\KK[x_{1},\dots,x_{m}]$ and $S_{2}=\KK[x_{m+1},\dots,x_{n}]$ be two polynomial rings  over a field $\KK$. Let $S=S_1\otimes_\KK S_2$ and  $I\subset S_{1}$,
$J\subset S_{2}$ be two nonzero homogeneous  ideals.  Then
\begin{itemize}
\item[(1)] $\reg(S/(I+J))=\reg(S_1/I)+\reg(S_2/J)$,
\item[(2)]$\reg(S/JI)=\reg(S_1/I)+\reg(S_2/J)+1$.
\end{itemize}
In particular,  if  $u$ is a monomial of degree  $d$ such that $\mbox{supp}\,(u)\cap \mbox{supp}\,(I)=\emptyset$ and $J=(u)$,  then  $\mbox{reg}\,(J)=d$ and $\mbox{reg}\,(JI)=\mbox{reg}\,(I)+d$.
\end{Lemma}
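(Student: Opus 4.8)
The plan is to compute the minimal graded free resolution of each module explicitly as a tensor product of resolutions over the base field $\KK$, and then read off the regularity from a Künneth-type formula for the graded Betti numbers. The starting observation is that, because $S=S_1\otimes_\KK S_2$ and the generators of $I\subset S_1$, $J\subset S_2$ involve disjoint sets of variables, there are natural identifications of $S$-modules $S/(I+J)\cong (S_1/I)\otimes_\KK(S_2/J)$ and $JI\cong I\otimes_\KK J$. The first is immediate from $S/(IS+JS)=(S_1\otimes_\KK S_2)/(I\otimes_\KK S_2+S_1\otimes_\KK J)$. For the second, the multiplication map $I\otimes_\KK J\to S$, $u\otimes v\mapsto uv$, is the restriction of the isomorphism $S_1\otimes_\KK S_2\xrightarrow{\sim} S$; since $\otimes_\KK$ is exact over a field, this restriction is injective with image exactly $JI$.

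Next I would tensor resolutions. Let $F_\bullet$ be the minimal graded free resolution of $S_1/I$ over $S_1$ (resp.\ of $I$ over $S_1$) and $G_\bullet$ that of $S_2/J$ over $S_2$ (resp.\ of $J$ over $S_2$). Because $\KK$ is a field, $-\otimes_\KK-$ is exact and introduces no higher Tor, so $F_\bullet\otimes_\KK G_\bullet$ is a graded free resolution over $S$ of the corresponding tensor product module. The key point is minimality: every entry of a differential in $F_\bullet\otimes_\KK G_\bullet$ is, up to sign, an entry of a differential of $F_\bullet$ or of $G_\bullet$, hence lies in $\mm_{S_1}$ or in $\mm_{S_2}$, both contained in the graded maximal ideal of $S$. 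Thus $F_\bullet\otimes_\KK G_\bullet$ is in fact the minimal free resolution, and I obtain the Künneth formula $\beta^S_{i,j}=\sum_{a+b=i,\,p+q=j}\beta^{S_1}_{a,p}\,\beta^{S_2}_{b,q}$ for the graded Betti numbers of the relevant module.

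Finally I would extract the regularity. A nonzero summand in the Künneth formula forces $\beta^{S_1}_{a,p}\neq 0$ and $\beta^{S_2}_{b,q}\neq 0$, and then $j-i=(p-a)+(q-b)$; since the two optimal index pairs may be chosen independently, the maximum over nonzero Betti numbers splits as a sum, so the regularity of the tensor product is the sum of the two factor regularities. For $(1)$ this reads $\reg(S/(I+J))=\reg(S_1/I)+\reg(S_2/J)$, and for $(2)$ it reads $\reg(JI)=\reg_{S_1}(I)+\reg_{S_2}(J)$; converting via $\reg(I)=\reg(S_1/I)+1$ and $\reg(J)=\reg(S_2/J)+1$ on each factor and then using $\reg(S/JI)=\reg(JI)-1$ gives $\reg(S/JI)=\reg(S_1/I)+\reg(S_2/J)+1$. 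The special case is then immediate: taking $S_2\supseteq\supp(u)$ and $J=(u)$, the resolution $0\to S(-d)\to(u)\to 0$ gives $\reg(J)=d$ and $\reg_{S_2}((u))=d$, whence $\reg(JI)=\reg(I)+d$. I expect the main obstacle to be the careful verification that $F_\bullet\otimes_\KK G_\bullet$ is minimal together with the bookkeeping between the regularity of an ideal and that of its quotient; the exactness of $\otimes_\KK$ and the injectivity yielding $JI\cong I\otimes_\KK J$ are the conceptual crux, while the remaining steps are formal.
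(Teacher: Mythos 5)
This lemma is not proved in the paper at all: it is quoted directly from Hoa and Tam \cite[Lemmas 2.2 and 3.2]{HT}, so there is no internal proof to compare against; your argument must be judged on its own, and it is correct. The two identifications you start from are valid: $S/(I+J)\cong (S_1/I)\otimes_\KK(S_2/J)$ is immediate, and for $JI\cong I\otimes_\KK J$ the only point you gloss over is that the image of the multiplication map is all of the ideal $JI$ and not just the $\KK$-span of the products $uv$; this is a one-line check (any $s\in S$ is a sum of products $ab$ with $a\in S_1$, $b\in S_2$, and then $s\cdot uv=\sum (au)(bv)$ with $au\in I$, $bv\in J$), but it should be said. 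The Künneth step is the crux and you handle it correctly: tensoring the two minimal resolutions over $\KK$ yields a free resolution of the tensor product whose differentials $d_F\otimes 1\pm 1\otimes d_G$ have all entries in $\mm_{S_1}S+\mm_{S_2}S\subseteq\mm_S$, hence it is minimal, giving $\beta^S_{i,j}=\sum_{a+b=i,\,p+q=j}\beta^{S_1}_{a,p}\beta^{S_2}_{b,q}$. The additivity of regularity then follows precisely because this sum has no cancellation: nonzero Betti numbers in optimal positions on each factor multiply to a nonzero Betti number upstairs, so the maximum of $j-i$ splits as the sum of the two maxima. The final bookkeeping ($\reg_S(JI)=\reg_{S_1}(I)+\reg_{S_2}(J)$, then subtract $1$ to pass to $S/JI$; take $J=(u)\cong S_2(-d)$ for the special case) is right. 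In short, your proof is complete up to routine verifications, and it is the standard argument for this kind of statement — very likely the same one used in \cite{HT} itself, which the paper simply cites.
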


\begin{Lemma}{\em (\cite[Theorem 2.5]{CH})}
\label{var}
	Let $I\subseteq S$ be a monomial ideal. Then, for any variable $x$, $\reg(I, x) \leq \reg(I)$.
\end{Lemma}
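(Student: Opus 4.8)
The plan is to reduce the stated bound to a colon inequality via the short exact sequence machinery of Lemma \ref{exact}. Since $\reg(J)=\reg(S/J)+1$ for every graded ideal $J$, the assertion $\reg(I,x)\le \reg(I)$ is equivalent to $\reg(S/(I,x))\le \reg(S/I)$, and we may assume $x\notin I$. First I would record the canonical sequence
\[
0\longrightarrow (S/(I:x))(-1)\xrightarrow{\ x\ } S/I\longrightarrow S/(I,x)\longrightarrow 0 ;
\]
here the first map is multiplication by $x$, which is injective because $xf\in I$ forces $f\in (I:x)$, and its cokernel is $S/(I,x)$. Applying part (2) of Lemma \ref{exact} gives
\[
\reg(S/(I,x))\le \max\{\reg((S/(I:x))(-1))-1,\ \reg(S/I)\}=\max\{\reg(S/(I:x)),\ \reg(S/I)\},
\]
so the whole statement follows once the colon bound $\reg(S/(I:x))\le \reg(S/I)$, equivalently $\reg(I:x)\le \reg(I)$, is established.

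I expect this colon bound to be the main obstacle, and it is here that the monomial structure of $I$ is used: it cannot be extracted from a single application of Lemma \ref{exact}, since the relations coming from the sequence above only compare $\reg(S/(I:x))$, $\reg(S/I)$ and $\reg(S/(I,x))$ with one another. My plan is to polarize first, which preserves all graded Betti numbers and hence the regularity, so that I may assume $I=I_\Delta$ is squarefree. For a variable $x=x_v$ one checks directly that $I_\Delta:x_v=I_{\star_\Delta(v)}$, and that $\star_\Delta(v)$ is the cone with apex $v$ over $\link_\Delta(v)$, so that $\reg(I_{\star_\Delta(v)})=\reg(I_{\link_\Delta(v)})$. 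The remaining inequality $\reg(I_{\link_\Delta(v)})\le \reg(I_\Delta)$ I would obtain by induction on the number of vertices, using the standard sequence
\[
0\longrightarrow \KK[\star_\Delta(v)](-1)\xrightarrow{\ x_v\ } \KK[\Delta]\longrightarrow \KK[\operatorname{del}_\Delta(v)]\longrightarrow 0
\]
together with Terai's identity $\reg(I_\Delta)=\pd(\KK[\Delta^{\vee}])$ and Alexander duality, which interchange the star, deletion and link operations on $\Delta$ with those on $\Delta^{\vee}$.

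The delicate feature of this step, and the genuine content of the lemma, is that the colon estimate and the sum estimate reinforce one another: neither is available in isolation, and the induction must carry the star (colon) bound and the deletion (sum) bound simultaneously, each feeding the other at the next smaller number of vertices. As a consistency check on the set-up, note that $S/(I,x)\cong \bar S/\bar I$, where $\bar S=S/(x)$ and $\bar I$ is the image of $I$ under $x\mapsto 0$; this exhibits $(I,x)$ as the elimination of $x$ from $I$ and makes it intuitively clear why discarding a variable cannot raise the regularity. The bound is sharp: equality already occurs for principal ideals $I=(u)$ with $x\nmid u$, where both sides equal $\deg u$.
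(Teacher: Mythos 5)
The first thing to say is that the paper contains no proof of this lemma at all: it is imported verbatim from \cite[Theorem 2.5]{CH}, so there is no internal argument to compare yours against, and your proposal must stand on its own. Its skeleton is sound as far as it goes: the sequence $0\to (S/(I:x))(-1)\xrightarrow{\,x\,} S/I\to S/(I,x)\to 0$ together with Lemma \ref{exact}(2) does correctly reduce the claim to the colon bound $\reg(I:x)\le\reg(I)$, and the identifications $(I_\Delta:x_v)=I_{\st_\Delta(v)}$ and $\reg I_{\st_\Delta(v)}=\reg I_{\lk_\Delta(v)}$ (the star being a cone) are correct. But two steps are asserted rather than proved, and one of them is where all the content lives. (a) ``Polarize first'' is not free for a colon: polarization is defined generator-by-generator, and you need that $(I^{\calP}:x_{11})$ becomes $(I:x)^{\calP}$ after the relabelling $x_{1k}\mapsto x_{1,k-1}$. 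This is true --- $(I:x)$ is generated by $u/\gcd(u,x)$ for $u\in\mathcal{G}(I)$, and since $u\mid v$ implies $\calP(u)\mid\calP(v)$, polarizing this possibly non-minimal generating set is harmless --- but none of it is in your text. (b) The inequality $\reg I_{\lk_\Delta(v)}\le\reg I_\Delta$, which is the actual substance of the lemma, is deferred to a ``simultaneous induction'' that is never formulated: you do not state the inductive hypothesis, and the claim that the star and deletion bounds must ``feed each other'' through Terai's identity and Alexander duality is not checkable as written.

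The gap is fillable, and more easily than your plan suggests; indeed your structural claim that ``neither bound is available in isolation'' is false. By Hochster's formula, for $\sigma\subseteq V\setminus\{v\}$ one has $\Delta[V\setminus\{v\}][\sigma]=\Delta[\sigma]$, so every graded Betti number of the deletion $\KK[\Delta[V\setminus\{v\}]]$ already occurs among those of $\KK[\Delta]$; hence the deletion (sum) bound holds outright, with no colon input and no induction. Your link bound then follows in one line from Terai's identity combined with $(\lk_\Delta(v))^{\vee}=\Delta^{\vee}[V\setminus\{v\}]$, namely $\reg I_{\lk_\Delta(v)}=\pd\KK[\Delta^{\vee}[V\setminus\{v\}]]\le\pd\KK[\Delta^{\vee}]=\reg I_\Delta$. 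Better still, the deletion bound alone already proves the lemma directly: $(I,x)^{\calP}=(I^{\calP},x_{11})$ up to variables occurring in no generator, and $S/(I_\Delta,x_v)=\KK[\Delta[V\setminus\{v\}]]$, so the exact sequence, the colon bound, and the proposed induction can all be dispensed with. Your closing consistency checks (the elimination description of $(I,x)$, and sharpness for $I=(u)$ with $x\nmid u$) are correct.
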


\begin{Lemma}{\em (\cite[Lemma 4.6]{ZDCL})}
\label{path}
Let $P_{\bf w}^n$ be a trivial path  with $n$ vertices, then $\reg(S/I(P^n_{\bf w}))=\lfloor \frac{n+1}{3} \rfloor$ and
    $\reg(S/I(P^n_{\bf w})^t)=2(t-1)+\reg(S/I(P^n_{\bf w}))$ for all $t\ge 1$.
\end{Lemma}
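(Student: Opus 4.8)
The plan is to reduce both assertions to the ordinary edge ideal. Since $P^n_{\bf w}$ is trivial, $I:=I(P^n_{\bf w})=(x_1x_2,x_2x_3,\dots,x_{n-1}x_n)$ is the classical edge ideal of the path on $x_1,\dots,x_n$, so the weights are irrelevant and I work with $I$ directly. Abbreviate $I_m=I(P^m_{\bf w})$ and $r_m=\reg(S/I_m)$. I first treat $t=1$, proving $r_n=\lfloor\frac{n+1}{3}\rfloor$ by matching upper and lower bounds, and then pass to powers. For the upper bound I would split on the \emph{second} vertex $x_2$ (splitting on the leaf $x_1$ gives the weaker recursion $r_n\le\max\{r_{n-2}+1,r_{n-1}\}$, which overshoots when $n\equiv1\pmod 3$) via
$$0\longrightarrow (S/(I_n:x_2))(-1)\xrightarrow{\ \cdot x_2\ }S/I_n\longrightarrow S/(I_n,x_2)\longrightarrow 0.$$
A direct computation gives $(I_n:x_2)=(x_1,x_3)+I(P\text{ on }x_4,\dots,x_n)$ and $(I_n,x_2)=(x_2)+I(P\text{ on }x_3,\dots,x_n)$, so Lemma \ref{sum1} (the leftover free variables $x_2$, resp.\ $x_1$, not affecting regularity) yields $\reg(S/(I_n:x_2))=r_{n-3}$ and $\reg(S/(I_n,x_2))=r_{n-2}$. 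Accounting for the shift by $1$, Lemma \ref{exact}(1) gives the recursion $r_n\le\max\{r_{n-3}+1,r_{n-2}\}$. Checking $r_0=r_1=0$, $r_2=r_3=1$ and verifying $\max\{\lfloor\frac{n-2}{3}\rfloor+1,\lfloor\frac{n-1}{3}\rfloor\}=\lfloor\frac{n+1}{3}\rfloor$ in each residue class modulo $3$ then gives $r_n\le\lfloor\frac{n+1}{3}\rfloor$ by induction.

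For the reverse inequality I would exhibit the induced matching $M=\{x_1x_2,x_4x_5,x_7x_8,\dots\}$, which has $\lfloor\frac{n+1}{3}\rfloor$ edges and spans an induced subgraph $H$ carrying no further edges. Then $I(H)$ is a sum of $|M|$ edge ideals in pairwise disjoint variables, each of regularity $1$, so Lemma \ref{sum1}(1) gives $\reg(S/I(H))=|M|$; combined with the standard monotonicity of regularity under passage to induced subgraphs this forces $r_n\ge\lfloor\frac{n+1}{3}\rfloor$, settling the case $t=1$.

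The powers are the crux, and I would argue by induction on $t$ toward the increment $\reg(S/I^t)=\reg(S/I^{t-1})+2$ for $t\ge2$, which together with $t=1$ produces the stated formula. For the upper bound I would follow Banerjee's method: relate $I^t$ to the colon ideals $(I^t:m)$, where $m$ runs over the minimal generators of $I^{t-1}$, through short exact sequences to which Lemma \ref{exact} applies. Each $(I^t:m)$ is again a quadratic monomial ideal, namely the edge ideal of the graph obtained from $P_n$ by adjoining the chords between vertices ``even-connected'' through $m$; for a path these auxiliary graphs stay simple enough that their regularity remains governed by the induced-matching number, and tracking the degree-$2$ shifts coming from $\mathcal{G}(I^{t-1})$ yields $\reg(S/I^t)\le\reg(S/I^{t-1})+2$. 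For the matching lower bound I would promote $M$ to powers in the manner of Beyarslan--H\`{a}--Trung, exhibiting a nonzero graded Betti number that certifies $\reg(S/I^t)\ge2(t-1)+|M|$.

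The step I expect to be the main obstacle is precisely this uniform control of the colon ideals $(I^t:m)$ in the upper bound: one must verify that, for every generator $m$ of $I^{t-1}$, the even-connection chords adjoined to $P_n$ never raise the relevant regularity beyond what the increment of $2$ allows. It is this combinatorial bookkeeping on the path --- rather than the elementary $t=1$ computation --- where the difficulty is concentrated.
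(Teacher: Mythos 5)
The first thing to say is that the paper contains no proof of this lemma to compare against: it is imported verbatim as \cite[Lemma 4.6]{ZDCL}, and that source in turn rests on the Beyarslan--H\`a--Trung theorem that $\reg (I(G)^t)=2t+\nu(G)-1$ for every forest $G$ (\cite{BHT}), specialized to $G=P_n$, whose induced matching number is $\nu(P_n)=\lfloor\frac{n+1}{3}\rfloor$. Measured on its own merits, your $t=1$ argument is correct and complete: the computations $(I_n:x_2)=(x_1,x_3)+I(P_{n-3})$ and $(I_n,x_2)=(x_2)+I(P_{n-2})$ are right, Lemma \ref{exact}(1) does give the recursion $r_n\le\max\{r_{n-3}+1,\,r_{n-2}\}$, the residue-class check is correct (and your observation that splitting at the leaf $x_1$ overshoots when $n\equiv 1 \pmod 3$ is accurate), and the lower bound via the induced matching $\{x_1x_2,x_4x_5,\dots\}$ works, although the monotonicity of regularity under induced subgraphs that it relies on is not among the paper's quoted lemmas and would need a citation (it is standard, via restriction of multigraded Betti numbers). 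Your lower bound for powers can likewise be made rigorous inside the paper's toolkit: restricting $I^t$ to the vertices of the matching gives $I(H)^t$ with $H$ a disjoint union of $\lfloor\frac{n+1}{3}\rfloor$ edges, and Lemma \ref{regular} then yields $\reg(S/I(H)^t)=2t+\lfloor\frac{n+1}{3}\rfloor-2=2(t-1)+\lfloor\frac{n+1}{3}\rfloor$, which bounds $\reg(S/I^t)$ from below.

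The genuine gap is the upper bound for $t\ge 2$, and it sits exactly where you flagged it. Your inductive step needs, for every minimal generator $m$ of $I^{t-1}$, a uniform bound on $\reg$ of the even-connection colon ideal $(I^t:m)$, and your proposal offers only the assertion that ``for a path these auxiliary graphs stay simple enough that their regularity remains governed by the induced-matching number.'' That assertion is precisely the technical heart of the forest case in \cite{BHT} (and of Banerjee's method \cite{Ba}): one must describe the graphs obtained from $P_n$ by adjoining even-connected chords, show they acquire no structure that raises regularity, and bound their regularity by $\nu(P_n)+1$; none of this is carried out, and it does not follow from anything else in your text. As written, therefore, the proposal proves the lemma only for $t=1$. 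Since the paper itself treats this statement as an external input, the clean fixes are either to cite \cite{BHT} for the powers statement outright (one line, and fully legitimate here), or to actually execute the even-connection analysis for $P_n$; the middle course of naming the machinery without performing its key verification is not a proof.
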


 \begin{Lemma}\label{regular}{\em (\cite[Lemma 4.4]{BHT})}
 	Let $u_1,\ldots, u_m$ be a regular sequence of homogeneous polynomials in $S$ with  $\deg\,(u_1)=\cdots=\deg\,(u_1)=d$. Let $I=(u_1,\ldots, u_m)$. Then we have $\reg\,(I^t)=dt+(d-1)(m-1)$  for all $t\ge 1$.
 \end{Lemma}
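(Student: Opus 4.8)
The plan is to peel off one generator and run a double induction on the pair $(m,t)$ driven entirely by Lemma~\ref{exact}. Set $I'=(u_1,\dots,u_{m-1})$, so that $I=I'+(u_m)$ and therefore
\[
I^t=(I')^t+u_mI^{t-1}\qquad(t\ge1,\ \text{with } I^0:=S).
\]
The Mayer--Vietoris sequence for this sum of ideals is
\[
0\longrightarrow (I')^t\cap u_mI^{t-1}\longrightarrow (I')^t\oplus u_mI^{t-1}\longrightarrow I^t\longrightarrow 0,
\]
and the first task is to evaluate the intersection. Since $u_1,\dots,u_m$ is a regular sequence, so is the initial segment $u_1,\dots,u_{m-1}$, whence $I'$ is a complete intersection and $u_m$ is a nonzerodivisor on $S/I'$. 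Consequently $\gr_{I'}(S)\cong(S/I')[T_1,\dots,T_{m-1}]$ is a free $S/I'$-module, so $u_m$ acts as a nonzerodivisor on each graded piece $(I')^j/(I')^{j+1}$ and hence, via the filtration of $S/(I')^t$, on $S/(I')^t$ itself. In particular $(I')^t:u_m=(I')^t$, and a direct check then gives $(I')^t\cap u_mI^{t-1}=u_m(I')^t$. As $u_m$ is a nonzerodivisor of degree $d$, multiplication by $u_m$ furnishes graded isomorphisms $u_m(I')^t\cong (I')^t(-d)$ and $u_mI^{t-1}\cong I^{t-1}(-d)$, so the sequence becomes
\[
0\longrightarrow (I')^t(-d)\longrightarrow (I')^t\oplus I^{t-1}(-d)\longrightarrow I^t\longrightarrow 0.
\]

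With this sequence I would induct, the outer induction being on $m$ and the inner one on $t$. The base case $m=1$ is immediate, since $I^t=(u_1^t)$ is principal and generated in degree $dt$, giving $\reg(I^t)=dt=dt+(d-1)(m-1)$. For the inductive step, fix $m\ge2$ and assume the formula for all $(m-1,s)$ with $s\ge1$ (outer hypothesis) and for $(m,t-1)$ when $t\ge2$ (inner hypothesis). Abbreviate $M=(I')^t(-d)$, $N=(I')^t\oplus I^{t-1}(-d)$ and $P=I^t$. The outer hypothesis yields $\reg((I')^t)=dt+(d-1)(m-2)$, so
\[
\reg(M)=dt+(d-1)(m-2)+d,\qquad \reg(M)-1=dt+(d-1)(m-1).
\]

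For the middle term one computes $\reg(I^{t-1}(-d))=dt+(d-1)(m-1)$ from the inner hypothesis when $t\ge2$, while for $t=1$ one uses directly $\reg(I^0(-d))=\reg(S(-d))=d$; in either case $\reg(N)\le dt+(d-1)(m-1)$, and comparing with the value of $\reg(M)$ above shows $\reg(M)>\reg(N)$. Applying the equality case of Lemma~\ref{exact}(2) to $0\to M\to N\to P\to0$ then gives
\[
\reg(I^t)=\reg(P)=\max\{\reg(M)-1,\ \reg(N)\}=dt+(d-1)(m-1),
\]
which closes the induction. The main obstacle is the homological input of the first paragraph: one must justify both that $u_m$ is a nonzerodivisor modulo every power $(I')^t$ and the resulting collapse $(I')^t\cap u_mI^{t-1}=u_m(I')^t$. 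This is exactly where the regular-sequence hypothesis does its work, through the freeness of $\gr_{I'}(S)$ over $S/I'$; once it is in place, the bookkeeping with Lemma~\ref{exact}(2) is routine.
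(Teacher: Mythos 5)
Your proof is correct. There is nothing internal to compare it against: the paper does not prove this lemma, it quotes it as \cite[Lemma 4.4]{BHT}, and the proof in that source deduces the formula from the explicitly known minimal free resolution (graded Betti numbers) of powers of a complete intersection, whereas you give a self-contained double induction using only Lemma \ref{exact}. All of your reductions check out: $I^t=(I')^t+u_mI^{t-1}$ holds since each degree-$t$ product of generators either avoids $u_m$ or is divisible by it; the initial segment $u_1,\dots,u_{m-1}$ is again a regular sequence, so $\gr_{I'}(S)\cong (S/I')[T_1,\dots,T_{m-1}]$ is free over $S/I'$, the finite filtration of $S/(I')^t$ then gives $(I')^t:u_m=(I')^t$, and consequently $(I')^t\cap u_mI^{t-1}\subseteq (I')^t\cap (u_m)=u_m(I')^t\subseteq u_mI^{t-1}$ (the last inclusion because $(I')^t\subseteq I^{t-1}$), so the intersection collapses exactly as you claim and the Mayer--Vietoris sequence takes the stated twisted form. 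The numerics are also right: $\reg(M)=dt+(d-1)(m-2)+d=dt+(d-1)(m-1)+1$ strictly exceeds $\reg(N)\le dt+(d-1)(m-1)$ (including the $t=1$ case with $I^0=S$ and the degenerate case $d=1$), so the equality case of Lemma \ref{exact}(2) legitimately yields $\reg(I^t)=\max\{\reg(M)-1,\reg(N)\}=dt+(d-1)(m-1)$, and the induction on $(m,t)$ is well-founded since each step only invokes $(m-1,t)$ and $(m,t-1)$, with the principal-ideal base $m=1$ immediate. What your route buys is elementarity and self-containment — no appeal to the structure of resolutions of powers of complete intersections — at the cost of recovering only the regularity, while the approach behind the cited lemma yields the entire Betti table of $I^t$ as a byproduct.
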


\section{The regularity of the edge ideal}
\label{sec:cycle}

Calculating or even estimating the regularity  of edge ideals of weighted cycles with arbitrary weight functions is a  difficult problem. In this section, we will give precise formulas for the  regularity of the edge ideal of  a non-trivial integrally closed  $n$-cycle.


\begin{Definition}{\em (\cite[Definition 1.1]{FHT})}\label{bettispliting}
		Let $I$ be a monomial ideal. If there exist monomial ideals $J$ and $K$ such that $\mathcal{G}(I)=\mathcal{G}(J)\cup\mathcal{G}(K)$ and $\mathcal{G}(J)\cap\mathcal{G}(K)=\emptyset$. Then $I=J+K$ is a Betti splitting if
\[
		\beta_{i, j}(I)=\beta_{i, j}(J)+\beta_{i, j}(K)+\beta_{i-1, j}(J \cap K) \text { for all } i, j \geq 0,
\]
		where $\beta_{i-1, j}(J \cap K)=0$ for $i=0$.
\end{Definition}

\begin{Lemma}{\em (\cite[Corollary 2.7]{FHT})}\label{spliting}
Suppose  $I=J+K$, where $\mathcal{G}(J)$ contains all the generators of $I$ that are divisible by some variable $x_i$, and $\mathcal{G}(K)$ is a nonempty set containing the remaining generators of $I$. If $J$ has a linear resolution, then $I=J+K$ is a Betti splitting.

Definition \ref{bettispliting} states that $\reg(I)=\max\{\reg(J), \reg(K), \reg(J \cap K)-1\}$,  as a result of the Betti splitting.
\end{Lemma}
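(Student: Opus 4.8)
The plan is to prove the Betti-splitting criterion directly from the Mayer--Vietoris sequence and then read off the regularity formula from the additivity of Betti numbers, rather than invoking any structure special to cycles. Since $I=J+K$ with $\mathcal{G}(I)=\mathcal{G}(J)\cup\mathcal{G}(K)$ a disjoint union, there is a short exact sequence of $\ZZ^n$-graded $S$-modules
\[
0\longrightarrow J\cap K \xrightarrow{\ f\mapsto (f,-f)\ } J\oplus K \xrightarrow{\ (g,h)\mapsto g+h\ } I \longrightarrow 0.
\]
Applying $-\otimes_S\KK$ and chasing the resulting long exact sequence of $\Tor$ modules in each internal degree $j$, a rank count shows that the additivity $\beta_{i,j}(I)=\beta_{i,j}(J)+\beta_{i,j}(K)+\beta_{i-1,j}(J\cap K)$ of Definition \ref{bettispliting} holds for all $i,j$ if and only if every connecting map $\psi_i\colon \Tor^S_i(J\cap K,\KK)\to \Tor^S_i(J,\KK)\oplus\Tor^S_i(K,\KK)$ vanishes. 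So the whole problem reduces to showing that the two components of $\psi_i$, namely the maps $\alpha_i$ and $\beta_i$ induced by the inclusions $J\cap K\hookrightarrow J$ and $J\cap K\hookrightarrow K$, are both zero.

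For $\alpha_i$ I would use a degree argument driven by the linear resolution of $J$. Writing $d$ for the common degree of the generators of $J$ (a linear resolution forces generation in a single degree), we have $\beta_{i,j}(J)=0$ unless $j=i+d$. On the other hand, every minimal generator of $J\cap K$ has the form $\lcm(u,v)$ with $u\in\mathcal{G}(J)$ and $v\in\mathcal{G}(K)$; as $u$ and $v$ are distinct minimal generators of $I$, neither divides the other, so $\deg\lcm(u,v)>d$. Hence $J\cap K$ is generated in degrees $\ge d+1$, which forces $\beta_{i,j}(J\cap K)=0$ for $j\le i+d$, in particular in the single degree $j=i+d$ where $\Tor^S_i(J,\KK)$ is supported. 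Since $\alpha_i$ preserves internal degree, it is zero.

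For $\beta_i$ I would instead exploit the $x_i$-structure, and here no hypothesis on $K$ is needed. Every generator of $J\cap K$ is divisible by $x_i$, while no generator of $K$ is; since $K$ is a monomial ideal with no generator divisible by $x_i$, one checks $(K:x_i)=K$, whence $x_iS\cap K=x_iK$ and therefore $J\cap K\subseteq x_iK$. The inclusion $x_iK\hookrightarrow K$ factors as the degree-shifting isomorphism $K(-1)\xrightarrow{\cong}x_iK$ followed by the map induced by multiplication $K(-1)\xrightarrow{\ \cdot x_i\ }K$; applying $\Tor^S_\bullet(-,\KK)$ and using that $x_i\in\mm$ acts as zero on the $\KK$-vector space $\Tor^S_i(K,\KK)$, the induced map $\Tor^S_i(x_iK,\KK)\to\Tor^S_i(K,\KK)$ is zero. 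As $\beta_i$ factors through it, $\beta_i=0$. With $\alpha_i=\beta_i=0$ for all $i$, the long exact sequence splits into short exact pieces and the Betti-splitting identity follows. Finally, feeding this additivity into $\reg(M)=\max\{j-i:\beta_{i,j}(M)\ne0\}$ and noting that a nonzero $\beta_{i-1,j}(J\cap K)$ contributes $j-i=(j-(i-1))-1$ yields $\reg(I)=\max\{\reg(J),\reg(K),\reg(J\cap K)-1\}$. The main obstacle is precisely the step of showing that the two connecting maps vanish: the cleanest point is the factorization $J\cap K\subseteq x_iK$ combined with the fact that multiplication by $x_i$ annihilates $\Tor(-,\KK)$, which disposes of the $K$-side, while the linear resolution of $J$ is exactly what is required to kill the $J$-side by the degree count above.
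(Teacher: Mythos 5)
Your proof is correct, but there is nothing in the paper to compare it against: the paper does not prove this lemma at all, it simply quotes it from \cite[Corollary 2.7]{FHT}. What you have written is, in substance, a faithful self-contained reconstruction of the argument in that cited source. The reduction of the Betti-splitting identity to the vanishing of the inclusion-induced maps $\Tor_i^S(J\cap K,\KK)\to\Tor_i^S(J,\KK)\oplus\Tor_i^S(K,\KK)$ via the sequence $0\to J\cap K\to J\oplus K\to I\to 0$ is their Proposition 2.1; the vanishing on the $K$-side for an $x_i$-partition, via $(K:x_i)=K$, hence $J\cap K\subseteq x_iS\cap K=x_iK$, combined with the fact that multiplication by $x_i\in\mm$ annihilates $\Tor_i^S(K,\KK)$, is exactly their $x_i$-splitting mechanism; and the degree count on the $J$-side --- $J$ generated in a single degree $d$ with $\beta_{i,j}(J)=0$ unless $j=i+d$, while $J\cap K$ is generated by monomials $\lcm(u,v)$ with $u\in\mathcal{G}(J)$, $v\in\mathcal{G}(K)$ neither dividing the other, hence in degrees $\geq d+1$, forcing $\beta_{i,j}(J\cap K)=0$ for $j\leq i+d$ --- is precisely how the linear-resolution hypothesis is used there. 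Your derivation of $\reg(I)=\max\{\reg(J),\reg(K),\reg(J\cap K)-1\}$ from the additivity of graded Betti numbers, with the shift $j-i=(j-(i-1))-1$ producing the $-1$, is also correct. Two cosmetic blemishes, neither affecting validity: the maps $\psi_i$ you call ``connecting maps'' are the inclusion-induced maps in the long exact sequence (the connecting homomorphisms are the maps $\Tor_i^S(I,\KK)\to\Tor_{i-1}^S(J\cap K,\KK)$), and the symbol $\beta_i$ is used simultaneously for a graded Betti number and for the component map $\Tor_i^S(J\cap K,\KK)\to\Tor_i^S(K,\KK)$; you should rename one of them.
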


\begin{Definition}  {\em (\cite[Definition 2.1]{F})}\label{polarization}
Let $I\subset S$ be a monomial ideal with $\mathcal{G}(I)=\{u_1,\ldots,u_m\}$, where $u_i=\prod\limits_{j=1}^n x_j^{a_{ij}}$ for $i=1,\ldots,m$.
The polarization of $I$, denoted by $I^{\calP}$, is a squarefree monomial ideal in the polynomial ring $S^{\calP}$
$$I^{\calP}=(\calP(u_1),\ldots,\calP(u_m))$$
where $\calP(u_i)=\prod\limits_{j=1}^n \prod\limits_{k=1}^{a_{ij}} x_{jk}$ is a squarefree monomial  in $S^{\calP}=\KK[x_{j1},\ldots,x_{ja_j}\mid j=1,\ldots,n]$ and $a_j=\max\{a_{ij}| i=1,\ldots,m\}$ for  $1\leq j\leq n$.
\end{Definition}

A monomial ideal  and its polarization  share many homological and
algebraic properties.  The following is a  useful property of the polarization.

\begin{Lemma}{\em (\cite[Corollary 1.6.3]{HH})}\label{polar}
 Let $I\subset S$ be a monomial ideal and $I^{\calP}\subset S^{\calP}$ be its polarization.
Then
\[
\beta_{ij}(I)=\beta_{ij}(I^{\calP})
\]
 for all $i$ and $j$. In particular, $\reg(I)=\reg(I^{\calP})$.
\end{Lemma}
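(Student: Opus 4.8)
My plan is to deduce the statement from two ingredients: a reduction principle saying that factoring out a linear nonzerodivisor changes neither the graded Betti numbers nor the regularity, and the combinatorial fact that the ``depolarizing'' differences of variables form a regular sequence on $S^{\calP}/I^{\calP}$ whose quotient is $S/I$. I would realize the full polarization as a composition of \emph{single steps}. In one step we pick a variable $x$ whose top exponent $t$ among the generators of a monomial ideal $J\subset R$ satisfies $t\ge 2$, adjoin a new variable $y$, and replace every generator of $x$-degree exactly $t$, written $x^{t}s$ with $s$ coprime to $x$, by $x^{t-1}ys$, leaving the remaining generators (those of $x$-degree at most $t-1$) unchanged; call the result $\widetilde J\subset R[y]$. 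Iterating until every exponent has dropped to $1$ produces exactly $S^{\calP}$ and $I^{\calP}$, so it suffices to prove that each single step preserves Betti numbers and that setting $y=x$ recovers $J$ from $\widetilde J$. The second point is immediate, since $x^{t-1}ys\mapsto x^{t}s$ under $y\mapsto x$.

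For the reduction principle I would record the following: if $\theta$ is a linear form that is a nonzerodivisor on a finitely generated graded $R$-module $M$, then $\beta_{ij}^{R}(M)=\beta_{ij}^{R/(\theta)}(M/\theta M)$ for all $i,j$. The reason is that the length-one Koszul complex $0\to R(-1)\xrightarrow{\theta}R\to R/(\theta)\to 0$ shows $\Tor_{i}^{R}(M,R/(\theta))=0$ for $i>0$ whenever $\theta$ is regular on $M$; hence tensoring the minimal graded free resolution of $M$ with $R/(\theta)$ yields an acyclic complex resolving $M/\theta M$, and it remains minimal because every entry of its differentials still lies in the graded maximal ideal. Applying this at each single step, with $\theta=y-x$ and $M=R[y]/\widetilde J$, gives $\beta_{ij}(R[y]/\widetilde J)=\beta_{ij}(R/J)$; composing over all steps yields $\beta_{ij}(S^{\calP}/I^{\calP})=\beta_{ij}(S/I)$. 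Since the resolution of an ideal is a single homological shift of that of its quotient, this is equivalent to $\beta_{ij}(I^{\calP})=\beta_{ij}(I)$, and because $\reg$ is read off from the set of pairs $(i,j)$ with $\beta_{ij}\ne 0$, the equality $\reg(I)=\reg(I^{\calP})$ follows at once.

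The crux, and the step I expect to cost the most care, is to show that $y-x$ is a nonzerodivisor on $R[y]/\widetilde J$ at a single step. Here I would use that every associated prime of a monomial ideal is a monomial prime, so that $y-x$ lies in an associated prime $\pp$ precisely when both $x$ and $y$ do; it therefore suffices to exclude simultaneous membership $x,y\in\pp$. The decisive structural feature, forced by $t\ge 2$, is \emph{domination}: the new variable $y$ occurs only in the generators $x^{t-1}ys$, each of which also contains $x$ to the positive power $t-1$, whereas the unchanged generators have $x$-degree at most $t-1$. Writing $\pp=(\widetilde J:w)$ for a monomial $w\notin\widetilde J$ with $x,y\in\pp$, the relation $yw\in\widetilde J$ must be witnessed by a generator carrying $y$, which forces $w$ to be free of $y$ and divisible by $x^{t-1}$; but then $xw\in\widetilde J$ must be witnessed by an unchanged generator, whose $x$-degree is at most $t-1$, forcing the $x$-degree of $w$ to be at most $t-2$ -- a contradiction. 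Hence no associated prime contains both $x$ and $y$, and $y-x$ is regular.

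Assembling the steps is then routine: domination persists after each step, since the freshly introduced variable is again tied to a positive power of the split variable as long as the top exponent still exceeds $1$, so the single-step lemma reapplies to the partially polarized ideals; the composite of the reductions identifies $S^{\calP}/I^{\calP}$ modulo the regular sequence of variable differences with $S/I$ while preserving every $\beta_{ij}$. The main obstacles I anticipate are purely bookkeeping: verifying the monomial-prime and colon computation that powers the single-step nonzerodivisor claim, and checking that the domination hypothesis is genuinely maintained through the entire chain of steps so that the argument can be iterated all the way to squarefreeness.
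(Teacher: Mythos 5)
The paper offers no proof of Lemma \ref{polar}: it is quoted directly from \cite[Corollary 1.6.3]{HH}, whose proof is exactly the argument you give — the depolarization differences form a regular sequence on $S^{\calP}/I^{\calP}$ whose quotient is $S/I$, and factoring out a linear nonzerodivisor preserves all graded Betti numbers. Your write-up is correct (including the single-step colon/associated-prime argument for the nonzerodivisor claim, and the observation that each partial polarization is again a monomial ideal so the step can be iterated), so it coincides in substance with the approach of the cited source.
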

		
\begin{Definition}{\em (\cite[Definition 1.4.1]{HH})}\label{integrallyclosed}
 Let  $I$  be an ideal in a ring $R$.
    An element $f \in R$ is  said to be {\em integral} over $I$ if there exists an equation
    \[
    f^k+c_1f^{k-1}+\dots+c_{k-1}f+c_k=0 \text{\ \ with\ \ }c_i \in I^i.
    \]
   The set $\overline{I}$ of elements in $R$  that are integral over $I$  is the \emph{integral closure} of $I$.
If $I=\overline{I}$, then $I$  is said to be  {\em integrally closed}.
    An edge-weighted graph $G_{\bf w}$ is said to be  {\em integrally closed} if its edge ideal $I(G_{\bf w})$ is integrally closed.
    \end{Definition}

According to \cite[Theorem 1.4.6]{HH}, every trivial graph $G_{\bf w}$ is integrally closed. The following lemma gives a complete characterization of a non-trivial graph that is integrally closed.

 \begin{Lemma}{\em (\cite[Theorem 3.6]{DZCL})}\label{integral}
 If  $G_{\bf w}$ is a  non-trivial  graph, then $I(G_{\bf w})$ is  integrally closed if and only if  $G_{\bf w}$  does not contain  any of the following three graphs as induced subgraphs.
    \begin{enumerate}
    \item  A path  $P_{\bf w}^3$ of length $2$ where  all edges have non-trivial weights.
    \item The  disjoint union $P_{\bf w}^2\sqcup P_{\bf w}^2$ of two paths $P_{\bf w}^2$  of length $1$ where all edges have non-trivial weights.
    \item A $3$-cycle $C_{\bf w}^3$ where all edges have non-trivial weights.
\end{enumerate}
\end{Lemma}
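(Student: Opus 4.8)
The plan is to reduce everything to the combinatorial description of the integral closure of a monomial ideal. Writing $\mathrm{NP}(I)=\conv\{\mathbf{b}: x^{\mathbf{b}}\in \mathcal{G}(I)\}+\RR_{\ge 0}^n$ for the Newton polyhedron of $I$, a monomial $x^{\alpha}$ lies in $\overline{I}$ exactly when $\alpha\in \mathrm{NP}(I)\cap\ZZ^n$ (see \cite[Section 1.4]{HH}). Thus $I(G_{\bf w})$ is integrally closed if and only if every lattice point of $\mathrm{NP}(I(G_{\bf w}))$ is divisible by some edge generator $x_i^{{\bf w}(ij)}x_j^{{\bf w}(ij)}$, i.e.\ dominates the exponent vector of a generator. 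I would prove the two implications separately using this criterion.

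For necessity I would argue by contraposition and lift witnesses along induced subgraphs. If $H_{\bf w}=G_{\bf w}[A]$ is an induced subgraph, then the generators of $I(H_{\bf w})$ are precisely the edge generators of $I(G_{\bf w})$ supported on $A$, so $\mathrm{NP}(I(H_{\bf w}))\subseteq \mathrm{NP}(I(G_{\bf w}))$; moreover a monomial $x^{\alpha}$ with $\supp(x^{\alpha})\subseteq A$ can only lie in $I(G_{\bf w})$ through a generator whose two endpoints lie in $A$, hence through a generator of $I(H_{\bf w})$. Consequently any witness $x^{\alpha}$ for $\overline{I(H_{\bf w})}\ne I(H_{\bf w})$ is automatically a witness for $\overline{I(G_{\bf w})}\ne I(G_{\bf w})$, and it suffices to show each of the three small ideals is not integrally closed. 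For $(x^ay^a,y^bz^b)$ and $(x^ay^a,u^bv^b)$ with $a,b\ge 2$, one checks that an interior lattice point of the segment joining the two generator exponents works (for instance $x^{a-1}y^{\max(a,b)}z^{b-1}$, respectively $x^{a-1}y^{a}u^{b-1}v^{b}$), since it is divisible by neither generator yet dominates a convex combination. For the triangle $(x^ay^a,y^bz^b,x^cz^c)$ one applies the same idea to a pair of edges sharing a vertex; a short case analysis shows at least one of the three pairs yields a witness avoiding the third generator, because the degeneracy conditions $c<\min(a,b)$, $b<\min(a,c)$, $a<\min(b,c)$ cannot hold simultaneously.

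For sufficiency, assume $G_{\bf w}$ contains none of the three configurations and let $\alpha\in \mathrm{NP}(I(G_{\bf w}))\cap\ZZ^n$; the goal is to find an edge generator dividing $x^{\alpha}$. Fix a representation $\sum_{e}c_e\,\mathbf{b}_e\le \alpha$ with $c_e\ge 0$, $\sum_e c_e=1$, where $M_e=x^{\mathbf b_e}$ is the generator of edge $e$, and set $F=\{e:c_e>0\}$. The driving observation is that $\alpha_v\ge \sum_{e\ni v}c_e\,{\bf w}(e)$, so $\alpha_v\ge 1$ whenever $v$ is an endpoint of some edge in $F$; hence any trivial edge of $G$ whose two endpoints are covered by $F$ already divides $x^{\alpha}$. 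Now distinguish cases on $F$. If $F$ contains a trivial edge, or consists of a single edge, we are done immediately. If two edges of $F$ share a vertex, avoiding configurations (1) and (3) forces the third side to be present and trivial, with both endpoints covered by $F$, and that weight-one chord divides $x^{\alpha}$. If $F$ is a matching of at least two non-trivial edges, avoiding configuration (2) forces an extra edge among the four endpoints; if it is trivial we finish as before, and if it is non-trivial we feed it together with an adjacent edge of $F$ back into the configuration-(1)/(3) analysis to manufacture a trivial chord covered by $F$. In every case some edge divides $x^{\alpha}$, so $x^{\alpha}\in I(G_{\bf w})$.

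The main obstacle I anticipate is this sufficiency argument, and within it precisely the matching case arising from configuration (2): the connecting edge guaranteed by forbidding the induced $P_{\bf w}^2\sqcup P_{\bf w}^2$ may itself be non-trivial, so one cannot conclude directly and must recycle it through the $P_{\bf w}^3$/triangle analysis to produce a genuinely weight-one chord. The conceptual content is that forbidding the three configurations is exactly what prevents the non-trivial edges from being ``spread out'': every local obstruction to membership is repaired by a trivial chord, and trivial edges impose only the condition $\alpha_i,\alpha_j\ge 1$, which is met by any lattice point of the Newton polyhedron meeting both endpoints.
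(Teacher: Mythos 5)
There is nothing to compare against inside this paper: the statement is Lemma \ref{integral}, which the authors import verbatim from \cite{DZCL} (Theorem 3.6) without reproducing a proof. Judged on its own merits, your argument is correct and complete in outline. The Newton-polyhedron criterion is the standard monomial description of integral closure; the lifting step in your necessity argument is valid precisely because induced weighted subgraphs inherit weights, so any generator of $I(G_{\bf w})$ dividing a monomial supported on $A$ is forced to be a generator of $I(G_{\bf w}[A])$; and your explicit witnesses check out: $x^{a-1}y^{\max(a,b)}z^{b-1}$ dominates the midpoint of the two generator exponents exactly because $a,b\ge 2$, and in the triangle case the three degeneracy conditions $c<\min(a,b)$, $b<\min(a,c)$, $a<\min(b,c)$ are already pairwise contradictory, so some pair of edges always yields a witness. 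For sufficiency, the case analysis on $F=\{e: c_e>0\}$ is exhaustive (trivial edge in $F$, single edge, two adjacent edges, matching), and the key mechanism is sound: avoiding (1) and (3) converts any two adjacent non-trivial edges into a triangle whose third side is present and trivial, and that side divides $x^{\alpha}$ because its endpoints are endpoints of edges of $F$, hence have $\alpha_v\ge 1$; the matching case reduces to this after the cross edge forced by avoiding (2) is examined. One point you should make explicit in a final write-up: the trivial chord produced in the adjacent-edge and matching cases need not itself belong to $F$, and its divisibility of $x^{\alpha}$ uses only the covering observation $\alpha_v\ge 1$, not membership in the convex representation — your text gestures at this but it is the one place a referee would ask for a sentence of justification.
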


From the  above lemma, we can derive

\begin{Corollary} \label{cycleinteg}
Let $C_{\bf w}^n$ be a  non-trivial integrally closed cycle  with $n$ vertices, then if $n=6$, it can have  at most three edges with non-trivial weights; otherwise, it can have up to two edges with non-trivial weights.
\end{Corollary}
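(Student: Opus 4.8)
The plan is to translate the three forbidden induced subgraphs in Lemma~\ref{integral} into combinatorial conditions on the cyclic positions of the non-trivial edges, and then to solve the resulting modular counting problem. First I would fix $V(C_{\bf w}^n)=\{x_1,\dots,x_n\}$, write $e_i=x_ix_{i+1}$ with indices read modulo $n$ (so $e_n=x_nx_1$), and let $T$ denote the set of edges of weight $\ge 2$; since $C_{\bf w}^n$ is non-trivial, $T\neq\emptyset$, and the task is to bound $|T|$. I treat $n\ge 4$ first. In a cycle with $n\ge 4$ the only induced copies of $P_3$ are triples of consecutive vertices, so an induced $P_{\bf w}^3$ with both edges non-trivial is exactly a pair of adjacent edges of $T$; hence forbidding configuration (1) is equivalent to requiring that no two indices of $T$ be consecutive modulo $n$.

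Next I would analyze configuration (2). For two vertex-disjoint edges $e_i,e_j$, the induced subgraph on their four endpoints is a $P_{\bf w}^2\sqcup P_{\bf w}^2$ unless some further cycle edge joins them; a direct check of the only possible extra adjacencies, namely $x_{i+1}x_j$ and $x_{j+1}x_i$, shows that such an edge exists if and only if $j-i\equiv\pm 2\pmod n$. Therefore forbidding (2) forces every pair of vertex-disjoint edges of $T$ to satisfy $j-i\equiv\pm 2\pmod n$, and since $\pm 2\not\equiv\pm 1\pmod n$ for $n\ge 4$ this already precludes adjacency. The combined effect of (1) and (2) is thus the clean statement: for $n\ge 4$, $C_{\bf w}^n$ is integrally closed if and only if any two distinct edges of $T$ have indices differing by $\pm 2\pmod n$ (configuration (3) is irrelevant here, as $C_n$ has no induced triangle for $n\ge 4$).

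With this reformulation the bound becomes purely arithmetic. Assuming $|T|\ge 3$, I would pick three distinct indices and translate one of them to $0$; the other two must both lie in $\{2,-2\}$ and be distinct, which forces them to equal $2$ and $-2$ (in particular $n\neq 4$, where $2\equiv -2$). Their mutual difference is then $\pm 4$, and imposing $\pm 4\equiv\pm 2\pmod n$ reduces to $n\mid 6$, whence $n=6$. Conversely, the set $\{e_1,e_3,e_5\}$ has all pairwise index differences in $\{2,4\}\equiv\{\pm 2\}\pmod 6$ and is therefore admissible, while adjoining any further edge creates consecutive indices. This yields $|T|\le 3$ with equality attainable when $n=6$, and $|T|\le 2$ (attained by $\{e_1,e_3\}$) for every other $n\ge 4$.

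Finally I would settle $n=3$ directly from Lemma~\ref{integral}: a triangle contains neither an induced $P_3$ nor an induced $P_{\bf w}^2\sqcup P_{\bf w}^2$, so only configuration (3) applies, and it forbids exactly the case in which all three edges are non-trivial; hence $|T|\le 2$ there too, matching the claim. The step I expect to be the main obstacle is the equivalence underlying configuration (2): cyclic wrap-around can make the induced subgraph on the four endpoints of $e_i,e_j$ degenerate or acquire unexpected edges for small index gaps and small $n$ — most strikingly at $n=4$, where two opposite edges induce a $C_4$ rather than a $2K_2$ — so the equivalence that a $2K_2$ is avoided precisely when $j-i\equiv\pm 2\pmod n$ must be verified by an explicit adjacency analysis rather than by a naive cyclic-distance count.
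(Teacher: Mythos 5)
Your proof is correct and follows exactly the route the paper intends: the paper states this corollary without proof as a direct consequence of Lemma \ref{integral}, and your argument is precisely the detailed verification of that derivation, translating the three forbidden induced subgraphs into the condition that the indices of non-trivial edges pairwise differ by $\pm 2 \pmod n$ and then solving the resulting arithmetic (with the correct special handling of $n=3$, $n=4$, and $n=6$). Nothing is missing; your explicit adjacency check for configuration (2), including the $n=4$ wrap-around degeneracy, is the only delicate point and you treat it properly.
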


The following lemmas provide some exact formulas for the regularity of the powers of the edge ideal of a non-trivial integrally closed path.
\begin{Lemma}{\em (\cite[Theorems 4.8 and 4.9]{ZDCL}}{\em )}
\label{nontrivialpath1}
Let $P_{\bf w}^n$  be a non-trivial integrally closed  path  with  $V(P_{\bf w}^n)=\{x_1,\ldots,x_n\}$. Let $\omega_i=\max\{{\bf w}_t\mid{\bf w}_t={\bf w}(e_t)\text{\ and }  e_t=x_tx_{t+1}\text{\ for}\\
	\text{each } 1\le t\le n-1\}$, with   $\omega_i\ge 2$ and $\omega_i\ge {\bf w}_{i+2}$ if $e_{i+2}\in E({P_{\bf w}^n})$. Then,
 \begin{itemize}
\item[(1)] if $n\le 4$, then  $\reg (S/I(P_{\bf w}^n))=2\omega_i-1$;
\item[(2)]  if $n\ge 5$, then  $\reg (S/I(P_{\bf w}^n))=\max \{2\omega_i+\lfloor \frac{i-1}{3} \rfloor+\lfloor \frac{n-(i+1)}{3} \rfloor, 2{\bf w}_{i+2}+\lfloor \frac{i-2}{3} \rfloor+\lfloor \frac{n-i}{3} \rfloor\}-1$.
 \end{itemize}
\end{Lemma}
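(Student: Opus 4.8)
The plan is to first use \Cref{integral} to pin down the shape of $P_{\bf w}^n$. A non-trivial integrally closed path can contain neither two adjacent non-trivial edges (forbidden subgraph $P_{\bf w}^3$) nor two non-trivial edges at distance $\ge 3$ (forbidden subgraph $P_{\bf w}^2\sqcup P_{\bf w}^2$), so any two non-trivial edges lie at distance exactly $2$; hence there are at most two of them, and after reflecting the path we may assume the maximal-weight edge is $e_i=x_ix_{i+1}$ (weight $\omega_i$) and the only other possibly non-trivial edge is $e_{i+2}=x_{i+2}x_{i+3}$ (weight ${\bf w}_{i+2}\le\omega_i$), all remaining edges being trivial. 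Write $T_1=2\omega_i+\lfloor\frac{i-1}{3}\rfloor+\lfloor\frac{n-(i+1)}{3}\rfloor$ and $T_2=2{\bf w}_{i+2}+\lfloor\frac{i-2}{3}\rfloor+\lfloor\frac{n-i}{3}\rfloor$, so the claim is $\reg(S/I(P_{\bf w}^n))=\max\{T_1,T_2\}-1$. The cases $n\le 4$ are base cases, where the structure collapses to essentially one dominant generator $x_i^{\omega_i}x_{i+1}^{\omega_i}$ and a direct computation of the (polarized) minimal free resolution gives $\reg=2\omega_i-1$. For $n\ge 5$ I would induct on $n$, the engine being the single short exact sequence attached to the trivial-edge vertex $x_{i+1}$ lying between the two potential non-trivial edges,
\[
0\longrightarrow (S/(I:x_{i+1}))(-1)\xrightarrow{\ \cdot x_{i+1}\ } S/I\longrightarrow S/(I,x_{i+1})\longrightarrow 0 ,
\]
the point being that the colon retains the contribution of $e_i$ while the quotient retains that of $e_{i+2}$, so both terms $T_1,T_2$ are produced from one sequence.

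For the quotient, setting $x_{i+1}=0$ kills $e_i$ and $e_{i+1}$, so $(I,x_{i+1})$ is the sum, in disjoint variable sets, of $(x_{i+1})$, the trivial path $I(P_{\bf w}^i)$ on $x_1,\dots,x_i$, and a shorter non-trivial integrally closed path $P'$ on $x_{i+2},\dots,x_n$ whose non-trivial (first) edge is $e_{i+2}$. By \Cref{sum1}, \Cref{path} and the inductive hypothesis applied to $P'$ one gets $\reg(S/(I,x_{i+1}))=\lfloor\frac{i+1}{3}\rfloor+\reg(S/I(P'))=T_2-1$; the floor identities $\lfloor\frac{i+1}{3}\rfloor=\lfloor\frac{i-2}{3}\rfloor+1$ and $\lfloor\frac{n-i}{3}\rfloor=\lfloor\frac{n-i-3}{3}\rfloor+1$ convert the two shapes into each other, and the same computation covers the degenerate case ${\bf w}_{i+2}=1$ (only one non-trivial edge).

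For the colon, dividing the generators by $x_{i+1}$ gives, again in disjoint variable sets, the isolated variable $x_{i+2}$, the trivial path on $x_{i+3},\dots,x_n$, and the \emph{unbalanced} ideal $J=(e_1,\dots,e_{i-1},\,x_i^{\omega_i}x_{i+1}^{\omega_i-1})$ on $x_1,\dots,x_{i+1}$. The key sub-computation is $\reg(S/J)$: I would strip the exponent of $x_{i+1}$ one unit at a time through the sequences $(J^{(k)}:x_{i+1})=J^{(k-1)}$, where each $(J^{(k)},x_{i+1})$ is again a trivial path, reducing to $J^{(0)}=(e_1,\dots,e_{i-1},x_i^{\omega_i})$; one further exact sequence at $x_i$ exposes the pure power and yields $\reg(S/J^{(0)})=\omega_i-1+\lfloor\frac{i-1}{3}\rfloor$, whence $\reg(S/J)=2\omega_i-2+\lfloor\frac{i-1}{3}\rfloor$. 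Combining via \Cref{sum1} gives $\reg(S/(I:x_{i+1}))=T_1-2$, so the shifted left-hand module has regularity $T_1-1$. This exponent-stripping step is precisely what manufactures the $2\omega_i$ in the formula.

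Finally, \Cref{exact}(1) gives $\reg(S/I)\le\max\{T_1-1,\,T_2-1\}=\max\{T_1,T_2\}-1$, with equality whenever $T_2-1\ne(T_1-1)-1$, i.e.\ whenever $T_1\ne T_2+1$; here the floor-function comparison of $T_1$ and $T_2$ must be carried out. I expect the main obstacles to be twofold: the unbalanced ideal $J$, handled by the secondary induction on the exponent above, and the borderline case $T_1=T_2+1$, where equality in \Cref{exact} is not automatic and one must instead secure the lower bound $\reg(S/I)\ge T_1-1$ directly---for instance by exhibiting a nonzero extremal Betti number coming from the colon module, or by running the sequence from the opposite endpoint. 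The small cases in which $i$ or $n-i$ is tiny should likewise be checked by hand against the $n\le 4$ base cases.
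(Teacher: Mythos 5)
First, a framing remark: the paper you were given never proves this lemma at all --- it is imported verbatim from \cite{ZDCL} (Theorems 4.8 and 4.9 there), so there is no internal proof to compare against, and your proposal must stand on its own. On its own terms, your skeleton is the right one and matches the technique this paper uses elsewhere (Theorems \ref{smalln} and \ref{bign}): Lemma \ref{integral} does force any two non-trivial edges of an integrally closed path to sit at distance exactly two, hence at most two of them; the exact sequence at $x_{i+1}$, the disjoint-support decompositions via Lemma \ref{sum1} and Lemma \ref{path}, and your floor-function bookkeeping (I checked the identities $\lfloor\frac{i+1}{3}\rfloor=\lfloor\frac{i-2}{3}\rfloor+1$, the values $\reg(S/(I,x_{i+1}))=T_2-1$ and $\reg(S/(I:x_{i+1}))=T_1-2$, and the formula $\reg(S_1/J)=2\omega_i-2+\lfloor\frac{i-1}{3}\rfloor$ on examples) are all consistent.

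The genuine gap is that Lemma \ref{exact}(1) only gives an upper bound in the borderline cases, and those cases actually occur --- so as written your argument proves only $\reg(S/I(P_{\bf w}^n))\leq\max\{T_1,T_2\}-1$. Concretely: (a) the case $T_1=T_2+1$ that you flagged is non-vacuous; it happens exactly when $\omega_i={\bf w}_{i+2}$ and the two floor increments differ, e.g.\ $n=8$, $i=4$, ${\bf w}_4={\bf w}_6=2$, all other weights $1$, where $T_1=6$ and $T_2=5$. Your suggested fixes do not obviously close it: running the sequence at the opposite vertex $x_{i+3}$ replaces $\lfloor\frac{n-i-1}{3}\rfloor$ by $\lfloor\frac{n-i-2}{3}\rfloor$ in the quotient term (giving only $T_1-2$ in this example) and pushes the difficulty into an even messier colon ideal. (b) More seriously, the same borderline failure occurs, unflagged, inside your secondary induction on the unbalanced ideal $J$: take $\omega_i=2$ and $i\equiv 0\pmod 3$, say $i=3$, so $J=(x_1x_2,x_2x_3,x_3^2x_4)$ and $J^{(0)}=(x_1x_2,x_2x_3,x_3^2)$. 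Then $\reg(S/J^{(0)})=1$ (its polarization is the edge ideal of a $4$-vertex path) while $\reg(S/(J^{(1)},x_4))=1$ as well, so $\reg(P)=\reg(M)-1$ exactly and Lemma \ref{exact}(1) certifies nothing; the claimed value $\reg(S/J)=2$ is in fact correct, but your induction cannot establish the lower bound. What is missing in both places is a systematic device for lower bounds when the exact-sequence lemma degenerates --- for instance polarization (Lemma \ref{polar}) combined with the restriction/induced-subcomplex lower bound coming from Hochster's formula, or a Betti splitting in the sense of Lemma \ref{spliting}. Without such a tool the proof is incomplete precisely at the hard spots.
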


\begin{Lemma}{\em (\cite[Theorem 4.8]{ZDCL}}{\em )}
\label{nontrivialpath2}
Let $P_{\bf w}^n$  be a non-trivial integrally closed  path  with  $V(P_{\bf w}^n)=\{x_1,\ldots,x_n\}$. Let $\omega_i=\max\{{\bf w}_s\mid{\bf w}_s={\bf w}(e_s)\text{\ and }  e_s=x_sx_{s+1}\text{\ for each }
1\le s\le n-1\}$, with  $\omega_i\ge 2$ and $\omega_i\ge {\bf w}_{i+2}$ if $e_{i+2}\in E({P_{\bf w}^n})$. Then
$\reg(S/I(P_{\bf w}^n)^t)=2(t-1)\omega_i+\reg(S/I(P_{\bf w}^n))$  for all $t\ge 1$.
\end{Lemma}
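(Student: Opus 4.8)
The plan is to fix the non-trivial integrally closed path $P_{\bf w}^n$ and induct on $t$; the case $t=1$ is vacuous (it reads $\reg(S/I(P_{\bf w}^n))=\reg(S/I(P_{\bf w}^n))$), so all the content lies in the inductive step. Write $I=I(P_{\bf w}^n)$ and let $e_i=x_ix_{i+1}$ be the edge of maximal weight $\omega=\omega_i\ge 2$, so that $f:=x_i^{\omega}x_{i+1}^{\omega}$ is the minimal generator of $I$ of largest degree $2\omega$. The integrally closed hypothesis, via Lemma \ref{integral}, rigidly constrains the weights: a non-trivial integrally closed path has at most two non-trivial edges, and if there are two they are forced to be $e_i$ and $e_{i+2}$; in particular the edges $e_{i-1}$ and $e_{i+1}$ neighbouring $e_i$ are trivial. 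This rigidity is exactly what makes the colon and quotient ideals appearing below tractable.

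The heart of the argument is the short exact sequence
\[
0\longrightarrow \bigl(S/(I^t:f)\bigr)(-2\omega)\xrightarrow{\ \cdot f\ } S/I^t\longrightarrow S/(I^t,f)\longrightarrow 0 .
\]
I would prove the two estimates $\reg\bigl(S/(I^t:f)\bigr)=2(t-2)\omega+\reg(S/I)$ and $\reg\bigl(S/(I^t,f)\bigr)\le 2(t-1)\omega+\reg(S/I)-2$. Granting these, the induction hypothesis for the exponent $t-1$ identifies the shifted left-hand term as having regularity $2(t-1)\omega+\reg(S/I)=:R$, while the right-hand term has regularity at most $R-2\neq R-1$; Lemma \ref{exact}(1) then yields the exact equality $\reg(S/I^t)=R$. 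It is worth stressing that this single invocation of Lemma \ref{exact}(1), through its equality clause, produces simultaneously the upper and the lower bound, so no independent lower-bound computation is required.

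For the quotient $S/(I^t,f)$, the mechanism is that adjoining $f$ kills every minimal generator of $I^t$ that is a multiple of $x_i^{\omega}x_{i+1}^{\omega}$; deleting the edge $e_i$ splits the path into two trivial subpaths on $\{x_1,\dots,x_i\}$ and $\{x_{i+1},\dots,x_n\}$, which live in disjoint sets of variables, and the surviving generators are governed by the powers of their edge ideals. I would make this precise and then apply Lemma \ref{sum1} to separate the two sides and Lemma \ref{path} to evaluate the regularity of powers of the two trivial paths; the bound $R-2$ then emerges from the arithmetic of the floor terms that make up $\reg(S/I)$. The genuinely hard step, and the main obstacle, is the colon computation. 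One has only a proper containment $I^{t-1}\subsetneq I^t:f$: already in the smallest example $I=(x_1^2x_2^2,x_2x_3)$, $t=2$, one finds $x_3^2\in I^2:f$ although $x_3^2\notin\overline I=I$, so $I^t:f$ acquires genuinely new generators. The task is to describe $\mathcal G(I^t:f)$ combinatorially, to see that the extra generators form a low ``tail'' supported away from the homological top, and to conclude via a Betti splitting (Lemma \ref{spliting}) or an auxiliary short exact sequence that $\reg(S/(I^t:f))$ equals $\reg(S/I^{t-1})=2(t-2)\omega+\reg(S/I)$.

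The case of two non-trivial edges $e_i,e_{i+2}$ runs along the same lines, still peeling off the top generator $f=x_i^{\omega}x_{i+1}^{\omega}$, where the hypothesis $\omega\ge {\bf w}_{i+2}$ guarantees that $f$ genuinely has maximal degree. Here the colon $I^t:f$ still carries the second non-trivial edge, so the inductive bookkeeping must track both weights simultaneously; I expect this mixed-weight colon computation to be the most delicate and error-prone part of the write-up, even though it is structurally parallel to the single-edge case.
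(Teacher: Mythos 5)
This lemma is never proved in the paper: it is imported verbatim from \cite[Theorem 4.8]{ZDCL}, so there is no internal proof to compare your attempt against, and your proposal has to stand on its own. It does not: it is a plan whose two load-bearing steps are left as acknowledged gaps. The skeleton — induction on $t$ together with the exact sequence
\[
0\longrightarrow \bigl(S/(I^t:f)\bigr)(-2\omega)\longrightarrow S/I^t\longrightarrow S/(I^t,f)\longrightarrow 0
\]
and the equality clause of Lemma \ref{exact}(1) — is sound, but both inputs to it are unestablished. For the colon, you yourself observe that $I^{t-1}\subsetneq I^t:f$ (your example $I=(x_1^2x_2^2,x_2x_3)$, $f=x_1^2x_2^2$, $x_3^2\in I^2:f$), so the inductive hypothesis says nothing about $\reg(S/(I^t:f))$; the assertion that the extra generators form a ``low tail'' and that a Betti splitting will force $\reg(S/(I^t:f))=\reg(S/I^{t-1})$ is precisely what would have to be proved, and no argument is offered. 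For the quotient, $(I^t,f)=(f)+J^t$ with $J$ generated by the remaining edges, but $f=x_i^{\omega}x_{i+1}^{\omega}$ shares its variables with the two subpaths, so Lemma \ref{sum1} cannot be applied to separate $(f)$ from $J^t$; the bound $\reg(S/(I^t,f))\le 2(t-1)\omega+\reg(S/I)-2$, which is exactly what is needed to rule out the forbidden value $2(t-1)\omega+\reg(S/I)-1$ in Lemma \ref{exact}(1), is again asserted, not derived.

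The two-non-trivial-edge case is worse than ``structurally parallel'': your description of the quotient step there is incorrect, since deleting $e_i$ does \emph{not} split $P_{\bf w}^n$ into two trivial subpaths — the subpath on $\{x_{i+1},\ldots,x_n\}$ still contains the non-trivial edge $e_{i+2}$ — so Lemma \ref{path} is unavailable and one would need Lemma \ref{nontrivialpath1} plus the very statement being proved for a shorter path, i.e., a double induction on $(n,t)$ that your outline never sets up. It is also worth noting that the techniques this paper actually uses for the analogous cycle results deliberately avoid coloning by a whole generator: either one colons by products of \emph{variables}, where an exact identity $I^t:x_{i+1}x_{i+2}=I^{t-1}$ is available (Lemma \ref{cyclecolon2}, used in Theorem \ref{n-cycle2}), or one totally orders $\mathcal{G}(I^t)$ and peels off generators one at a time with explicitly computed colon ideals (Theorems \ref{Li}, \ref{Ji}, \ref{n-cycle3}). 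Your choice of coloning by $f$ is what produces the intractable ideal $I^t:f$, and nothing in the proposal overcomes that obstacle.
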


The following lemma provides some exact formulas for the regularity of the powers of the edge ideal of a trivial integrally closed cycle.
\begin{Lemma}{\em (\cite[Theorem 5.2]{BHT}}{\em )}
\label{trivialcycle}
Let $C_{\bf w}^n$ be a trivial integrally closed cycle  with $n$ vertices, then
 \begin{itemize}
   \item[(1)]   $\reg(S/I(C_{\bf w}^n))=\lfloor \frac{n+1}{3} \rfloor$.
 \item[(2)]   $\reg(S/I(C^n_{\bf w})^t)=2(t-1)+\reg(S/I(C^n_{\bf w}))$ for all $t\ge 1$.
 \end{itemize}
\end{Lemma}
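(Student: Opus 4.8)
The plan is to establish part (1) first and then bootstrap to the power formula in part (2) by induction on $t$, using the short exact sequence of Lemma \ref{exact} together with the path formula of Lemma \ref{path} as the two workhorses. Since $C^n_{\bf w}$ is trivial, $I(C^n_{\bf w})$ is the ordinary edge ideal $(x_1x_2,\ldots,x_{n-1}x_n,x_nx_1)$ of the cycle $C_n$, so throughout I may argue purely in terms of the combinatorics of $C_n$; write $I=I(C_n)$ and $S'$ for the polynomial ring on the surviving variables after a deletion.

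For part (1), fix the vertex $x_1$, whose neighbours in $C_n$ are $x_2$ and $x_n$, and consider the short exact sequence
$$0\longrightarrow (S/(I:x_1))(-1)\xrightarrow{\ \cdot x_1\ } S/I\longrightarrow S/(I,x_1)\longrightarrow 0.$$
Deleting $x_1$ from $C_n$ leaves the path $x_2-x_3-\cdots-x_n$, so $(I,x_1)$ is, modulo the dead variable $x_1$, the edge ideal of $P_{n-1}$; hence $\reg(S/(I,x_1))=\reg(S/I(P_{n-1}))=\lfloor n/3\rfloor$ by Lemma \ref{path}. Likewise $(I:x_1)=(x_2,x_n)+I(P_{n-1})$, and killing $x_2,x_n$ collapses that path to $x_3-\cdots-x_{n-1}$, giving $\reg(S/(I:x_1))=\reg(S/I(P_{n-3}))=\lfloor (n-2)/3\rfloor$. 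Feeding these into Lemma \ref{exact}(1) yields $\reg(S/I)\le\max\{\lfloor (n-2)/3\rfloor+1,\lfloor n/3\rfloor\}$, and a case check on $n\bmod 3$ shows this maximum equals $\lfloor (n+1)/3\rfloor$. When $n\not\equiv 2\pmod 3$ the two numbers differ, so the equality clause of Lemma \ref{exact}(1) forces $\reg(S/I)=\lfloor (n+1)/3\rfloor$; the induced-matching lower bound $\reg(S/I)\ge\nu(C_n)=\lfloor n/3\rfloor$ also suffices here. The delicate case is $n\equiv 2\pmod 3$, where $\lfloor(n-2)/3\rfloor+1=\lfloor n/3\rfloor$ leaves the equality clause silent while the matching bound falls exactly one short of the target $(n+1)/3$. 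To supply the missing lower bound I would compute a single nonzero Betti number by Hochster's formula: over the full vertex set $\beta_{i,n}(S/I)=\dim_k\tilde H_{\,n-i-1}(\Ind(C_n))$, where $\Ind(C_n)$ is the independence complex. Its homotopy type is classical (a wedge of spheres), and for $n=3k+2$ one has $\Ind(C_n)\simeq S^{k}$; hence $\tilde H_k\neq 0$ produces a nonzero $\beta_{n-1-k,\,n}$, contributing $n-(n-1-k)=k+1=(n+1)/3$ to the regularity. This pins down $\reg(S/I)\ge(n+1)/3$ and completes part (1).

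For part (2) I would induct on $t$, the case $t=1$ being part (1). For the lower bound, deleting $x_n$ exhibits $P_{n-1}$ as an induced subgraph of $C_n$, and reducing modulo $x_n$ combined with Lemma \ref{var} gives the vertex-deletion monotonicity $\reg(S/I^t)\ge\reg(S'/I(P_{n-1})^t)=2(t-1)+\lfloor n/3\rfloor$ via Lemma \ref{path}; for $n\equiv 2\pmod 3$ the remaining $+1$ is recovered from part (1) together with the general lower bound that for an edge ideal regularity rises by at least two per power, $\reg(S/I^t)\ge\reg(S/I)+2(t-1)$. For the upper bound I would run the colon-ideal recursion for powers of edge ideals: ordering the generators of $I^{t-1}$ and using the associated short exact sequences, Lemma \ref{exact} reduces $\reg(S/I^t)$ to the maximum of $\reg(S/I^{t-1})$ (controlled by induction) and the quantities $\reg(S/(I^{t}:m))+2(t-1)$ as $m$ runs over $\G(I^{t-1})$. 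By the even-connection description of these colon ideals, each $I^{t}:m$ is again the edge ideal of a graph obtained from $C_n$ by adjoining chords and whiskers, whose regularity is bounded by that of the underlying paths through Lemma \ref{path}; a bookkeeping argument then shows every term is at most $2(t-1)+\lfloor(n+1)/3\rfloor$, matching the lower bound.

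The main obstacle is twofold, and both parts live in the residue $n\equiv 2\pmod 3$. First, the extra $+1$ in the base regularity is invisible to induced matchings and to induced paths and is genuinely a cyclic phenomenon, so the lower bound really requires the topology of $\Ind(C_n)$ (or an equivalent homological input). Second, in the inductive step the identification and regularity control of the colon ideals $I^{t}:m$ --- showing that the even-connected graphs they define never exceed the path bound of Lemma \ref{path} --- is where the combinatorial bookkeeping concentrates, and propagating the $n\equiv 2$ anomaly correctly through the induction is the part demanding the most care.
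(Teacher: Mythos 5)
You are in an unusual situation: the paper contains no proof for your attempt to be compared with, since Lemma \ref{trivialcycle} is simply quoted from \cite[Theorem 5.2]{BHT}. Measured against that source, however, part (2) of the statement is misquoted, and is in fact false for $n\equiv 2 \pmod 3$ and $t\ge 2$. The actual theorem of Beyarslan--H\`a--Trung is $\reg(I(C_n)^t)=2t+\lfloor n/3\rfloor-1$ for all $t\ge 2$, i.e.\ $\reg(S/I(C_n)^t)=2(t-1)+\lfloor n/3\rfloor$: the ``$+1$'' anomaly in $\reg(S/I(C_n))=\lfloor (n+1)/3\rfloor$ when $n\equiv 2\pmod 3$ occurs only at $t=1$ and does not persist to higher powers. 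Concretely, $C_5$ is gap-free and (being triangle-free) cricket-free, so Banerjee's theorem \cite{Ba} gives $\reg(I(C_5)^t)=2t$ for $t\ge 2$; alternatively, the colon recursion you yourself invoke shows $I(C_5)^2:e=I(C_5)+(x_3x_5)$ for $e=x_1x_2$, the edge ideal of a co-chordal graph with regularity $2$, whence $\reg(I(C_5)^2)\le\max\{\reg(I(C_5)),\,2+2\}=4$, matching the induced-matching lower bound $2\cdot 2+\nu-1=4$. Thus $\reg(S/I(C_5)^2)=3$, while the Lemma's formula predicts $2(2-1)+2=4$. Tellingly, the paper's own main theorem, specialized to weight $\omega=1$, reads $2t+\lfloor n/3\rfloor-2=2(t-1)+\lfloor n/3\rfloor$, which agrees with \cite{BHT} and not with the Lemma; the slip apparently comes from pattern-matching the path case, Lemma \ref{path}, where no anomaly exists because $\reg(S/I(P_n))$ equals the induced matching number.

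Your attempt is correct exactly where the statement is correct and breaks exactly where it is false. Part (1) is sound and classical: the deletion/colon sequence with Lemma \ref{exact}, plus Hochster's formula and the fact that $\Ind(C_{3k+2})$ is homotopy equivalent to the $k$-sphere, is the standard argument (one small slip: the condition silencing the equality clause of Lemma \ref{exact}(1) should be $\lfloor (n-2)/3\rfloor=\lfloor n/3\rfloor$, i.e.\ $\reg(S/(I:x_1))=\reg(S/(I,x_1))$, not ``$\lfloor (n-2)/3\rfloor+1=\lfloor n/3\rfloor$''). Part (2) also works for $n\not\equiv 2\pmod 3$, where your induced-subgraph lower bound $2(t-1)+\lfloor n/3\rfloor$, obtained from $(I^t,x_n)=(I(P_{n-1})^t,x_n)$ and Lemma \ref{var}, meets the Banerjee-type upper bound. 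But for $n\equiv 2\pmod 3$ you close the gap by invoking ``the general lower bound that for an edge ideal regularity rises by at least two per power, $\reg(S/I^t)\ge\reg(S/I)+2(t-1)$.'' No such theorem exists --- even monotonicity of $\reg(I^t)$ in $t$ is open for edge ideals in general --- and it is refuted by $C_5$ itself, as computed above. This is not patchable: carrying out the bookkeeping of your own upper-bound step, the even-connected colon ideals $I^t:m$ arising from a cycle are edge ideals of $C_n$ with chords added, of ideal regularity at most $\lfloor n/3\rfloor+1$, so the recursion yields $\reg(S/I^t)\le 2(t-1)+\lfloor n/3\rfloor$ for $t\ge 2$, strictly below the target $2(t-1)+\lfloor (n+1)/3\rfloor$ when $n\equiv 2\pmod 3$. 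The correct move is to flag the misquotation and restate part (2) as $\reg(S/I(C^n_{\bf w})^t)=2(t-1)+\lfloor n/3\rfloor$ for all $t\ge 2$ (fortunately, the paper's later arguments use Lemma \ref{path} for paths rather than this lemma, so the error does not propagate).
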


Thus, we will now consider a non-trivial integrally closed cycle that satisfies the following conditions.
\begin{Remark}
    \label{n-cycle}
    Let  $n\ge 3$ be an integer and $C_{\bf w}^n$  be a non-trivial integrally closed  $n$-cycle  with  $V(C_{\bf w}^n)=\{x_1,\ldots,x_n\}$ and  $E(C_{\bf w}^n)=\{e_1,\ldots,e_{n}\}$, where $e_i=x_ix_{i+1}$ with $x_{n+1}=x_1$ and ${\bf w}_i={\bf w}(e_i)$ for all $i \in [n]$.
\end{Remark}

We first compute the regularity  of  the edge ideal of a cycle $C_{\bf w}^n$  for $3\leq n\leq 6$.

\begin{Theorem}
\label{smalln} Let $C_{\bf w}^n$ be an $n$-cycle as in Remark \ref{n-cycle}, where $3\leq n\leq 6$. Let $\omega=\max\{{\bf w}_1,\ldots,{\bf w}_{n}\}$. Then
\[
\reg (S/I(C_{\bf w}^n))=2\omega+\lfloor \frac{n}{3} \rfloor-2.
\]
\end{Theorem}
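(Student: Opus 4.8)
The plan is to break the cycle at a single vertex and compare the regularity of $S/I(C_{\bf w}^n)$ with that of a path, which is already understood. After relabelling the vertices cyclically (an automorphism of $C_{\bf w}^n$, hence harmless for $\reg$), I may assume the maximal weight $\omega$ is attained on $e_1=x_1x_2$. Writing $I=I(C_{\bf w}^n)$, the engine is the short exact sequence
\[
0\longrightarrow (S/(I:x_n))(-1)\xrightarrow{\ \cdot x_n\ } S/I\longrightarrow S/(I,x_n)\longrightarrow 0 .
\]
I will show that the quotient term realizes the claimed value $2\omega+\lfloor n/3\rfloor-2$ while the colon term has strictly smaller regularity. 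Indeed, if $\reg(S/(I:x_n))+1\le \reg(S/(I,x_n))$, then $\reg(S/(I,x_n))\neq \reg\bigl((S/(I:x_n))(-1)\bigr)-1$, so Lemma~\ref{exact}(1) forces $\reg(S/I)=\reg(S/(I,x_n))$, which is exactly what we want.

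For the quotient term, passing to $S/(I,x_n)$ deletes the two edges $e_{n-1}$ and $e_n$ meeting $x_n$, leaving the edge ideal of the path $x_1-x_2-\cdots-x_{n-1}$ together with the free variable $x_n$; by Lemma~\ref{sum1}(1) this variable contributes nothing, so $\reg(S/(I,x_n))$ equals the regularity of this path $P^{n-1}_{\bf w}$. Since $e_1$ survives, the path is non-trivial with maximal weight $\omega$ on its first edge. Moreover, by Corollary~\ref{cycleinteg} together with the forbidden-subgraph description of Lemma~\ref{integral}, any further non-trivial edge lies at cyclic distance at least two from $e_1$, so after deleting $x_n$ the surviving non-trivial edges occupy position $1$ and, only when $n=6$, position $3$. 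Feeding this into Lemma~\ref{nontrivialpath1} (or Lemma~\ref{path} in the trivial sub-cases) gives $2\omega-1$ for $3\le n\le 5$ and $2\omega$ for $n=6$, i.e. precisely $2\omega+\lfloor n/3\rfloor-2$.

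The substantive step is bounding the colon term by $2\omega+\lfloor n/3\rfloor-3$. For $n\ge 4$ the edge $e_n=x_nx_1$ is adjacent to the non-trivial edge $e_1$, hence trivial by Lemma~\ref{integral}(1); therefore $x_1\in(I:x_n)$, and splitting off this variable via Lemma~\ref{sum1}(1) reduces $(I:x_n)$ to an ideal $J=J_0+(g)$ in $\KK[x_2,\dots,x_n]$, where $J_0$ is the edge ideal of the shorter path $x_2-\cdots-x_{n-1}$ and $g=x_{n-1}^{{\bf w}_{n-1}}x_n^{{\bf w}_{n-1}-1}$ is the single ``partial'' generator coming from $e_{n-1}$. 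Applying the short exact sequence attached to $J=J_0+(g)$, a short computation shows $\reg(S/(J_0:g))\le \lfloor n/3\rfloor-1$, and since $\deg(g)=2{\bf w}_{n-1}-1$, Lemma~\ref{exact}(2) yields $\reg(S/J)\le\max\{\reg(S/(J_0:g))+\deg(g)-1,\ \reg(S/J_0)\}\le 2\omega+\lfloor n/3\rfloor-3$, one less than the target. The case $n=3$ is treated by hand, since there $(I:x_3)$ collapses to $(x_1,x_2)$ or to a variable together with a principal ideal. Combining the two previous paragraphs through Lemma~\ref{exact}(1) then finishes the proof.

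I expect the colon estimate to be the main obstacle, and within it the tie case for $n=6$: when a second non-trivial edge also carries weight $\omega$, comparing $J$ with the full weighted-path ideal $J_0+(x_{n-1}^{{\bf w}_{n-1}}x_n^{{\bf w}_{n-1}})$ would only give the bound $2\omega$, which is not strict. The resolution is to keep the genuine partial generator $g$, of odd degree $2{\bf w}_{n-1}-1$; the extra $-1$ arising in Lemma~\ref{exact}(2) is exactly what lowers the estimate to $\reg(S/(I:x_n))\le 2\omega-1$, producing the strict drop needed to conclude.
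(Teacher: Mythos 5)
Your skeleton is the same as the paper's: split off $x_n$ via the short exact sequence, identify $S/(I,x_n)$ with a weighted path plus a free variable whose regularity is the target value, and force the conclusion through Lemma~\ref{exact}(1) by showing the colon term drops strictly. Your quotient computation is fine, and your colon estimate does go through for $n=3,4,6$. The genuine gap is at $n=5$, exactly where your relabelling claim is wrong. Lemma~\ref{integral} does \emph{not} prevent a second non-trivial edge from surviving in the path when $n=5$: the cycle $C_{\bf w}^5$ with ${\bf w}_1={\bf w}_3=\omega\geq 2$ and ${\bf w}_2={\bf w}_4={\bf w}_5=1$ is integrally closed, because every four vertices of $C^5$ induce a path, never $P_{\bf w}^2\sqcup P_{\bf w}^2$; so your assertion that a non-trivial edge can occupy position $3$ ``only when $n=6$'' is false (it even contradicts your own remark that cyclic distance two from $e_1$ is allowed). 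In this configuration your $J_0=(x_2x_3,\ x_3^{{\bf w}_3}x_4^{{\bf w}_3})$ has $\reg(S/J_0)=2{\bf w}_3-1=2\omega-1$ by Lemma~\ref{nontrivialpath1}(1), whereas the bound you must prove is $2\omega+\lfloor\frac{5}{3}\rfloor-3=2\omega-2$. Since the Lemma~\ref{exact}(2) estimate is $\reg(S/J)\leq\max\{\reg(S/(J_0:g))+\deg(g)-1,\ \reg(S/J_0)\}$ and the second entry of the maximum is already $2\omega-1$, your argument cannot deliver the strict drop. Your auxiliary claim also fails here: since ${\bf w}_4=1$ one has $g=x_4$, so $(J_0:g)=(x_2x_3,\ x_3^{{\bf w}_3}x_4^{{\bf w}_3-1})$ and $\reg(S/(J_0:g))=2{\bf w}_3-2\geq 2$, not $\leq\lfloor\frac{5}{3}\rfloor-1=0$.

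The inequality you want is nevertheless true; your second exact sequence is simply too coarse to see it. Because ${\bf w}_4=1$, the colon ideal contains the variable $x_4$, which makes the generator $x_3^{{\bf w}_3}x_4^{{\bf w}_3}$ redundant: in fact $(I:x_5)=(x_1,\ x_4,\ x_2x_3)$, so $\reg(S/(I:x_5))=1\leq 2\omega-2$. This is precisely how the paper proceeds for $n=5$ (and, in its two-step analysis via $x_5$, for $n=6$): it first reduces each colon ideal to its minimal generators, so that the weight ${\bf w}_3$ disappears, and then reads off the regularity from Lemma~\ref{sum1}; it never runs an exact sequence that retains the full path ideal $J_0$, which still ``remembers'' ${\bf w}_3$. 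To repair your proof, either simplify $J=J_0+(g)$ to $(x_2x_3,x_4)$ before estimating, or use the reflection of $C_{\bf w}^5$ fixing $e_1$ to place the second non-trivial edge at position $4$, where it feeds into $g$ rather than into $J_0$; with either correction the rest of your argument goes through.
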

\begin{proof}  Let $I=I(C_{\bf w}^n)$ and $\omega={\bf w}_1$  by symmetry. If $n=3$, then ${\bf w}_2=1$ or ${\bf w}_3=1$ by Lemma \ref{integral}. We can assume $\bf w_3=1$ due to symmetry. So $I:x_3=(x_2^{{\bf w}_2}x_3^{{\bf w}_2-1},x_1)$ and $(I,x_3)=(x_1^{{\bf w}_1}x_2^{{\bf w}_1},x_3)$. It follows from Lemma \ref{sum1} that
		$\reg(S/(I:x_3))=2{\bf w}_2-2$ and  $\reg(S/(I,x_3))=2{\bf w}_1-1$. Applying  Lemma \ref{exact} to the following short exact sequence with $i=3$
\begin{gather}\label{eqn: equality1}
\begin{matrix}
 0 & \rightarrow & \frac{S}{I : x_{i}}(-1)  & \stackrel{ \cdot x_i} \longrightarrow  & \frac{S}{I} & \rightarrow & \frac{S}{(I,x_{i})} & \rightarrow & 0,
 \end{matrix}
\end{gather}
we can observe that $\reg (S/I)=2{\bf w}_1-1$.

If $n=4$, then ${\bf w}_2={\bf w}_4=1$  by Lemma \ref{integral}. Thus $(I : x_4)=(x_2x_3,x_3^{{\bf w}_3}x_4^{{\bf w}_3-1},x_1)$ and  $(I,x_4)=(x_1^{{\bf w}_1}x_2^{{\bf w}_1},x_2x_3,x_4)$. Let
$(I : x_4)^{\calP}$ and $(I,x_4)^{\calP}$ be  the polarizations of ideals $I : x_4$ and $(I,x_4)$, respectively. By Lemmas \ref{sum1}, \ref{spliting} and \ref{polar},
$\reg(S/(I : x_4))=\reg(S^{\calP}/(I : x_4)^{\calP})=2{\bf w}_3-2$ and $\reg(S/(I,x_4))=\reg(S^{\calP}/(I,x_4)^{\calP})=2{\bf w}_1-1$.
 Using  Lemma \ref{exact} and  the  short exact sequence (\ref{eqn: equality1}) with $i=4$,
we can see that $\reg (S/I)=2{\bf w}_1-1$.

If $n=5$, then ${\bf w}_2={\bf w}_4={\bf w}_5=1$  by symmetry and Lemma \ref{integral}. Thus $(I : x_5)=(x_2x_3,x_1,x_4)$ and  $(I,x_5)=(x_1^{{\bf w}_1}x_2^{{\bf w}_1},x_2x_3,x_3^{{\bf w}_3}x_4^{{\bf w}_3},x_5)$. It follows from Lemmas \ref{sum1} and  \ref{nontrivialpath1} that $\reg(S/(I : x_5))=1$ and $\reg(S/(I,x_5))=2{\bf w}_1-1$. This implies that   $\reg (S/I)=2{\bf w}_1-1$ due to
 Lemma \ref{exact} and  the  short exact sequence (\ref{eqn: equality1}) with $i=5$.

If $n=6$,  then ${\bf w}_1\ge {\bf w}_3\ge {\bf w}_5$ and ${\bf w}_2={\bf w}_4={\bf w}_6=1$ by symmetry and Lemma \ref{integral}. Thus $(I,x_6)=(x_1^{{\bf w}_1}x_2^{{\bf w}_1},x_2x_3,x_3^{{\bf w}_3}x_4^{{\bf w}_3},x_4x_5,x_6)$ and  $(I : x_6)=(x_2x_3,x_3^{{\bf w}_3}x_4^{{\bf w}_3},$ $x_4x_5,x_5^{{\bf w}_5}x_6^{{\bf w}_5-1},x_1)$. By Lemma \ref{nontrivialpath1}, $\reg(S/(I,x_6))=2{\bf w}_1$. To compute $\reg(S/(I : x_6))$, we distinguish
between the following two cases:

(i) If ${\bf w}_5=1$, then $(I:x_6)=(x_2x_3,x_3^{{\bf w}_3}x_4^{{\bf w}_3},x_5,x_1)$. It follows from Lemmas \ref{sum1} and \ref{nontrivialpath1} that $\reg(S/(I : x_6))=2{\bf w}_3-1$.

(ii) If ${\bf w}_5\ge 2$, then $((I:x_6):x_5)=(x_2x_3,x_5^{{\bf w}_5-1}x_6^{{\bf w}_5-1},x_4,x_1)$ and $((I:x_6),x_5)=(x_2x_3,x_3^{{\bf w}_3}x_4^{{\bf w}_3},x_5,x_1)$. It follows from Lemmas \ref{sum1} and \ref{nontrivialpath1} that $\reg(S/((I:x_6):x_5))=2{\bf w}_5-2$ and $\reg(S/((I:x_6),x_5))=2{\bf w}_3-1$. Thus, by  Lemma \ref{exact} and  the following short exact sequence
\begin{gather*}
\begin{matrix}
 0 & \rightarrow & \frac{S}{(I:x_{6}) : x_{5}}(-1)  & \stackrel{ \cdot x_5} \longrightarrow  & \frac{S}{I:x_{6}} & \rightarrow & \frac{S}{((I:x_{6}),x_{5})} & \rightarrow & 0,
 \end{matrix}
\end{gather*}
we get  $\reg (S/(I : x_6))=2{\bf w}_3-1$.

In any case, again using Lemma \ref{exact} and  the exact sequence (\ref{eqn: equality1}) with $i=6$,
we can determine that $\reg (S/I)=2{\bf w}_1$.
\end{proof}

\begin{Theorem}
\label{bign} Let $n\geq 7$ be an integer and let $C_{\bf w}^n$ be an $n$-cycle as in Remark \ref{n-cycle}.  Then
\[
	\reg (S/I(C_{\bf w}^n))=2{\omega}+\lfloor \frac{n}{3} \rfloor-2
	\]
where  $\omega=\max\{{\bf w}_1,\ldots,{\bf w}_{n}\}$.
\end{Theorem}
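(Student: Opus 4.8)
The plan is to reduce the computation to paths via the short exact sequence (\ref{eqn: equality1}) and Lemma \ref{nontrivialpath1}, after first locating the non-trivial edges. By Corollary \ref{cycleinteg}, a non-trivial integrally closed $n$-cycle with $n\ge 7$ has at most two non-trivial edges. If it has exactly one, I place it at $e_1$ using the rotational symmetry of the cycle, so that $\omega={\bf w}_1$. If it has two, then by Lemma \ref{integral} they can neither share a vertex (this would give an induced $P_{\bf w}^3$ with both edges non-trivial) nor lie at cyclic distance at least $3$ (the four endpoints would then induce a forbidden $P_{\bf w}^2\sqcup P_{\bf w}^2$); hence they sit at cyclic distance exactly $2$, and after a rotation together with the reflection $x_k\mapsto x_{5-k}$, which interchanges $e_1$ and $e_3$, I may take them to be $e_1,e_3$ with $\omega={\bf w}_1\ge{\bf w}_3$. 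In either case the vertex $x_n$ meets only trivial edges.

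First I would apply (\ref{eqn: equality1}) with $i=n$. Deleting $x_n$ breaks the cycle into the path on $x_1,\dots,x_{n-1}$, so $(I,x_n)=(x_n)+I(P_{\bf w}^{n-1})$, where $P_{\bf w}^{n-1}$ is an induced subpath (hence again integrally closed by Lemma \ref{integral}) carrying the non-trivial edge $e_1$ of weight $\omega$ at position $1$, and, in the two-edge case, the edge $e_3$ at position $3$. Since $n-1\ge 6$, Lemma \ref{nontrivialpath1}(2) applies with $\omega_1=\omega\ge{\bf w}_3={\bf w}_{1+2}$; a short floor computation shows the first entry of its maximum dominates, giving $\reg(S/(I,x_n))=2\omega+\lfloor (n-3)/3\rfloor=2\omega+\lfloor n/3\rfloor-2$, which is already the target value.

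Next I would treat $(I:x_n)$. As $e_{n-1}=x_{n-1}x_n$ and $e_n=x_nx_1$ are trivial, passing to the colon introduces the variables $x_1,x_{n-1}$, and modulo these the surviving generators form a path on $x_2,\dots,x_{n-2}$; thus $(I:x_n)=(x_1,x_{n-1})+I(P')$ with $P'$ a path on $n-3$ vertices. In the one-edge case $P'$ is trivial and Lemma \ref{path} gives $\reg(S/(I:x_n))=\lfloor (n-2)/3\rfloor$, while in the two-edge case $P'$ carries $e_3$ at position $2$ and Lemma \ref{nontrivialpath1} together with Lemma \ref{sum1} (to absorb $x_1,x_{n-1}$) gives $\reg(S/(I:x_n))=2{\bf w}_3+\lfloor n/3\rfloor-3$ for $n\ge 8$; the boundary value $n=7$ makes $P'$ a $P_{\bf w}^4$, handled by Lemma \ref{nontrivialpath1}(1) and consistent with the same formula. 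In all cases $\reg\!\big(\tfrac{S}{I:x_n}(-1)\big)=\reg(S/(I:x_n))+1\le 2\omega+\lfloor n/3\rfloor-2=\reg(S/(I,x_n))$, and a gap check gives $\reg(S/(I,x_n))\ne\reg\!\big(\tfrac{S}{I:x_n}(-1)\big)-1$; hence Lemma \ref{exact}(1) applies with equality and yields $\reg(S/I)=2\omega+\lfloor n/3\rfloor-2$.

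The difficulty here is bookkeeping rather than conceptual. The crux is to verify, through the floor arithmetic, that the asserted entry of the maximum in Lemma \ref{nontrivialpath1} always wins (using $\omega\ge 2$ and $\omega\ge{\bf w}_3$), that after the degree shift the colon module never reaches $\reg(S/(I,x_n))-1$ so that the strict inequalities required by Lemma \ref{exact}(1) hold, and that the short paths arising at the extremal value $n=7$ are governed by the correct clause of Lemma \ref{nontrivialpath1}. I would organize the write-up as the two cases — one non-trivial edge, or two at cyclic distance two — running the identical exact sequence at $x_n$ in both.
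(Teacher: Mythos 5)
Your proposal is correct and follows essentially the same route as the paper: the same short exact sequence at $x_n$, the same identifications $(I,x_n)=(x_n)+I(P_{\bf w}^{n-1})$ and $(I:x_n)=(x_1,x_{n-1})+I(P_{\bf w}^{n-3})$, and the same appeals to Lemmas \ref{path}, \ref{nontrivialpath1}, \ref{sum1} and \ref{exact} — in fact you are more careful than the paper about the WLOG placement of the non-trivial edges, the $n=7$ boundary case, and the equality condition in Lemma \ref{exact}(1). One cosmetic slip: the intermediate expression for $\reg(S/(I,x_n))$ should read $2\omega+\lfloor (n-3)/3\rfloor-1$ (you dropped the $-1$ from Lemma \ref{nontrivialpath1}(2)), which does equal your stated final value $2\omega+\lfloor n/3\rfloor-2$.
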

\begin{proof}  Let $I=I(C_{\bf w}^n)$ and  $\omega={\bf w}_1$,  then we can assume  ${\bf w}_1\ge {\bf w}_3$ and other ${\bf w}_i=1$ by symmetry and Lemma \ref{integral}.
		Thus $(I : x_n)=I(P_{\bf w}^{n-3})+(x_1,x_{n-1})$ and  $(I,x_n)=I(P_{\bf w}^{n-1})+(x_n)$, where $P_{\bf w}^{n-3}$ and $P_{\bf w}^{n-1}$ are  induced subgraphs of $C_{\bf w}^n$ on the sets $\{x_2,\ldots,x_{n-2}\}$
and $\{x_1,\ldots,x_{n-1}\}$, respectively.
It follows from Lemmas  \ref{sum1} and \ref{nontrivialpath1} that
\begin{eqnarray*}
\reg(S/(I,x_n))&=&\max \{2{\bf w}_1+\lfloor \frac{1-1}{3} \rfloor+\lfloor \frac{n-1-(1+1)}{3} \rfloor, 2{\bf w}_{1+2}+\lfloor \frac{1-2}{3} \rfloor\\
&+&\lfloor \frac{n-1-1}{3} \rfloor\}-1\\
&=&\max \{2{\bf w}_1+\lfloor \frac{n}{3} \rfloor-1, 2{\bf w}_{3}+\lfloor \frac{n-2}{3} \rfloor-1\}-1\\
&=&2{\bf w}_1+\lfloor \frac{n}{3} \rfloor-2.
\end{eqnarray*}
On the other hand, if ${\bf w}_3=1$, then $\reg(S/(I:x_n))=\lfloor \frac{n-2}{3} \rfloor$ by Lemma \ref{path}. Otherwise, by Lemmas  \ref{sum1} and \ref{nontrivialpath1}, we get that
\begin{eqnarray*}
\reg(S/(I:x_n))&=&\max \{2{\bf w}_3+\lfloor \frac{2-1}{3} \rfloor+\lfloor \frac{n-3-(2+1)}{3} \rfloor, 2+\lfloor \frac{2-2}{3} \rfloor\\
&+&\lfloor \frac{n-3-2}{3} \rfloor\}-1\\
&=&\max \{2{\bf w}_3+\lfloor \frac{n}{3} \rfloor-2, \lfloor \frac{n-2}{3} \rfloor+1\}-1\\
&=&2{\bf w}_3+\lfloor \frac{n}{3} \rfloor-3.
\end{eqnarray*}
Applying Lemma \ref{exact}  to the  short exact sequence (\ref{eqn: equality1}) with $i=n$, we can determine that $\reg (S/I)=2{\bf w}_1+\lfloor \frac{n}{3} \rfloor-2$.
\end{proof}

\section{The regularity of powers of the edge ideal}
\label{sec:power}
In this section, we will give precise formulas for the  regularity of the powers of the edge ideal of  a non-trivial integrally closed  $n$-cycle.

We will now  analyze the regularity  of  $I(C_{\bf w}^n)^t$ for all $t\ge 2$.
\begin{Lemma}\label{cyclecolon}
Let $C_{\bf w}^n$ be an $n$-cycle as in Remark \ref{n-cycle}. If ${\bf w}_{i}=1$, then
\[
((I(C^n_{\bf w})^t: x_i),x_{i+1})=((I(C^n_{\bf w} \setminus {x_{i+1}})^t: x_i),x_{i+1})
\]
for all $t\ge 2$.
\end{Lemma}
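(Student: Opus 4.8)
The plan is to prove the equality of these two monomial ideals by comparing their minimal monomial generators, working one variable at a time. Write $I = I(C^n_{\bf w})$ and $J = I(C^n_{\bf w}\setminus x_{i+1})$, and set $A = ((I^t:x_i),x_{i+1})$ and $B = ((J^t:x_i),x_{i+1})$. The first thing I would record is a clean description of $\mathcal{G}(J)$. Deleting the vertex $x_{i+1}$ removes exactly the two edges incident to it, namely $e_i = x_ix_{i+1}$ and $e_{i+1}=x_{i+1}x_{i+2}$, while leaving every other edge (and its weight) unchanged. Hence the minimal generators of $J$ are precisely the generators $x_j^{{\bf w}_j}x_{j+1}^{{\bf w}_j}$ of $I$ with $j\neq i,i+1$, that is, \emph{exactly the minimal generators of $I$ in whose support $x_{i+1}$ does not occur}. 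The hypothesis ${\bf w}_i=1$ simply makes the deleted generator $g_i$ equal to $x_ix_{i+1}$; what actually matters is only that $g_i$ and $g_{i+1}$ are the two generators of $I$ divisible by $x_{i+1}$.

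The inclusion $B\subseteq A$ is immediate: from $J\subseteq I$ we get $J^t\subseteq I^t$, hence $(J^t:x_i)\subseteq (I^t:x_i)$ and therefore $B\subseteq A$. For the reverse inclusion $A\subseteq B$ I would argue on minimal monomial generators. Every minimal monomial generator of $A$ is either $x_{i+1}$ itself, which lies in $B$, or a minimal monomial generator $u$ of $(I^t:x_i)$ with $x_{i+1}\nmid u$. So it suffices to show that any monomial $u$ satisfying $x_{i+1}\nmid u$ and $x_iu\in I^t$ already satisfies $x_iu\in J^t$, i.e.\ $u\in(J^t:x_i)\subseteq B$.

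This reduces everything to the core step. Since $x_iu\in I^t$ and $I^t$ is a monomial ideal generated by the length-$t$ products $g_{j_1}\cdots g_{j_t}$ of generators of $I$, there is a factorization $x_iu = m\,g_{j_1}\cdots g_{j_t}$ for some monomial $m$ and some $g_{j_k}\in\mathcal{G}(I)$. Because $x_{i+1}\nmid u$ and $x_{i+1}\neq x_i$, the variable $x_{i+1}$ does not divide $x_iu$, so it divides none of the factors $g_{j_k}$; in particular neither $g_i$ nor $g_{i+1}$ can appear. By the description of $\mathcal{G}(J)$ above, each $g_{j_k}$ then lies in $\mathcal{G}(J)$, whence $g_{j_1}\cdots g_{j_t}\in J^t$ and $x_iu = m\,g_{j_1}\cdots g_{j_t}\in J^t$. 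This yields $u\in(J^t:x_i)\subseteq B$, completing $A\subseteq B$ and hence the desired equality.

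The only genuinely delicate point is the identification in the first paragraph, namely that removing the vertex $x_{i+1}$ deletes precisely the generators of $I$ divisible by $x_{i+1}$ and preserves all the others with the same weights; once this is in place, the argument is just the standard fact that a monomial lies in a power of a monomial ideal exactly when it is a multiple of a length-$t$ product of generators, combined with tracking the single variable $x_{i+1}$. I do not expect any further obstruction, since the whole point is that a generator product dividing an $x_{i+1}$-free monomial can only involve $x_{i+1}$-free generators.
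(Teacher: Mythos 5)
Your proof is correct and takes essentially the same route as the paper's: the containment $((I(C^n_{\bf w}\setminus x_{i+1})^t:x_i),x_{i+1})\subseteq((I(C^n_{\bf w})^t:x_i),x_{i+1})$ is immediate from the inclusion of ideals, and for the reverse you reduce to minimal generators $u$ with $x_{i+1}\nmid u$ and track the variable $x_{i+1}$ through a factorization of $ux_i$ into $t$ generators of $I(C^n_{\bf w})$, concluding that only generators avoiding $x_{i+1}$, i.e.\ generators of $I(C^n_{\bf w}\setminus x_{i+1})$, can occur. You additionally make explicit the step the paper leaves implicit (why the factors may be taken in $\mathcal{G}(I(C^n_{\bf w}\setminus x_{i+1}))$), and your observation that the hypothesis ${\bf w}_i=1$ is never actually used in the argument is accurate.
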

\begin{proof} It is clear that $((I(C^n_{\bf w} \setminus {x_{i+1}})^t: x_i),x_{i+1})\subseteq ((I(C^n_{\bf w})^t: x_i),x_{i+1})$.  For any monomial $u \in \mathcal{G}(((I(C^n_{\bf w})^t: x_i),x_{i+1}))$, we have $u \in ((I(C^n_{\bf w} \setminus {x_{i+1}})^t: x_i),x_{i+1})$  if
 $x_{i+1}|u$. Otherwise,  $ux_i \in I(C_{\bf w}^n)^t$, so we  can write $ux_i$ as
\[
ux_i=u_{\ell 1} \cdots u_{\ell t}v
\]
  where each $u_{\ell j} \in \mathcal{G}(I(C^n_{\bf w}\setminus {x_{i+1}}))$ and $v$ is a monomial, which means that $u \in \mathcal{G}((I(C^n_{\bf w} \setminus {x_{i+1}})^t: x_i))$.
\end{proof}

\begin{Theorem}
\label{3-cycle} Let $C_{\bf w}^3$ be a $3$-cycle as in Remark \ref{n-cycle}.  Then, for all $t\ge 1$,
\[
\reg (S/I(C_{\bf w}^3)^t)=2\omega t-1
\]
where $\omega=\max\{{\bf w}_1,{\bf w}_2,{\bf w}_3\}$.
\end{Theorem}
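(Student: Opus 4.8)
The plan is to induct on $t$, taking the base case $t=1$ from Theorem~\ref{smalln}, which gives $\reg(S/I(C_{\bf w}^3))=2\omega-1$. Write $I=I(C_{\bf w}^3)$. By Lemma~\ref{integral} a non-trivial integrally closed $3$-cycle must have a trivial edge, so after relabelling (by symmetry) I may assume $\omega={\bf w}_1\ge{\bf w}_2$ and ${\bf w}_3=1$, i.e.\ $I=(f_1,f_2,f_3)$ with $f_1=x_1^{\omega}x_2^{\omega}$, $f_2=x_2^{{\bf w}_2}x_3^{{\bf w}_2}$, $f_3=x_1x_3$.

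For the lower bound I would isolate the maximal-weight edge via the variable $x_3$. Reducing modulo $x_3$ kills the two generators divisible by $x_3$ and leaves only $f_1^t$, so $(I^t,x_3)=(x_1^{\omega t}x_2^{\omega t},x_3)$, whence $\reg(S/(I^t,x_3))=2\omega t-1$ by Lemma~\ref{sum1}(1). Applying Lemma~\ref{var} to the monomial ideal $I^t$ and the variable $x_3$ gives $\reg((I^t,x_3))\le\reg(I^t)$, and therefore $\reg(S/I^t)\ge\reg(S/(I^t,x_3))=2\omega t-1$.

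For the matching upper bound I would run the exact sequence (\ref{eqn: equality1}) with $i=3$. Since $\reg(S/(I^t,x_3))=2\omega t-1$, Lemma~\ref{exact}(1) reduces the claim to the estimate $\reg(S/(I^t:x_3))\le 2\omega t-2$. To control this colon I feed it into (\ref{eqn: equality1}) a second time, now with $x_1$. Because ${\bf w}_3=1$, Lemma~\ref{cyclecolon} identifies $((I^t:x_3),x_1)=((I(C_{\bf w}^3\setminus x_1)^t:x_3),x_1)$, and $C_{\bf w}^3\setminus x_1$ is the single weighted edge $x_2x_3$, so this ideal is $(x_2^{{\bf w}_2t}x_3^{{\bf w}_2t-1},x_1)$ with regularity $2{\bf w}_2t-2\le 2\omega t-2$ by Lemma~\ref{sum1}. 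Applying Lemma~\ref{exact}(1) once more, the entire problem collapses to the single estimate $\reg(S/(I^t:x_1x_3))\le 2\omega t-3$, in which $x_1x_3=f_3$ is a generator of $I$.

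This last colon is the heart of the matter and the step I expect to be the main obstacle. Splitting the generators $f_1^af_2^bf_3^c$ of $I^t$ according to whether $c\ge 1$ yields $I^t:x_1x_3=I^{t-1}+\big((f_1,f_2)^t:x_1x_3\big)$, where the first summand has regularity $2\omega(t-1)-1$ by induction. The delicate point is that the second summand $B_t:=(f_1,f_2)^t:x_1x_3$ carries, when ${\bf w}_2\ge 2$, a minimal generator $x_1^{\omega t-1}x_2^{\omega t}$ of degree $2\omega t-1$; since this monomial is divisible by $f_1^{t-1}\in I^{t-1}$ it is absorbed in the sum and is \emph{not} a minimal generator of $I^t:x_1x_3$, so a naive Mayer--Vietoris bound through $\reg(S/B_t)$ is hopelessly weak. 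The real work is to pin down the genuinely new minimal generators of $I^t:x_1x_3$ — the ``mixed'' monomials $x_1^{\max\{a\omega-1,0\}}x_2^{a\omega+b{\bf w}_2}x_3^{\max\{b{\bf w}_2-1,0\}}$ with $a+b=t$ that survive reduction modulo $I^{t-1}$ — and to show the resulting ideal has regularity at most $2\omega t-3$. I would do this by induction on $t$ using the special ordering of the minimal generators of the powers announced for this section, aiming to relate $\reg(S/(I^t:x_1x_3))$ back to $\reg(S/(I^{t-1}:x_3))$ plus a controlled contribution of size $2\omega$ from the extra factor of $f_1$. One cannot shortcut this by the path results: the weighted path $(f_1,f_2)$ need not be integrally closed when both weights exceed $1$, so Lemmas~\ref{nontrivialpath1} and \ref{nontrivialpath2} do not apply and the exponents must be tracked by hand.
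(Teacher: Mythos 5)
Your reduction is correct as far as it goes, and its skeleton coincides with the paper's argument for the case ${\bf w}_2\ge 2$: the lower bound via Lemma \ref{var}, the two exact sequences coloning by $x_3$ and then by $x_1$, and the identification $((I^t:x_3),x_1)=(x_2^{{\bf w}_2t}x_3^{{\bf w}_2t-1},x_1)$ via Lemma \ref{cyclecolon} are all sound. But the proof stops exactly where you say it does: the decisive estimate $\reg(S/(I^t:x_1x_3))\le 2\omega t-3$ is never established, only a plan is gestured at, so there is a genuine gap. Moreover, your diagnosis of where the difficulty lies is inverted. When ${\bf w}_2\ge 2$ there are \emph{no} surviving mixed generators: since $\omega,{\bf w}_2\ge 2$ one has $f_1f_2/(x_1x_3)=(x_1x_3)\,x_1^{\omega-2}x_2^{\omega+{\bf w}_2}x_3^{{\bf w}_2-2}\in I$, hence $f_1^af_2^b/(x_1x_3)\in f_1^{a-1}f_2^{b-1}f_3\,S\subseteq I^{t-1}$ for $a,b\ge 1$, and together with the trivial cases this gives $I^t:x_1x_3=I^{t-1}$ exactly (this short computation is precisely the paper's case ${\bf w}_2\ge 2$; compare Lemma \ref{cyclecolon2}, which the paper re-proves inline for $n=3$). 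Induction then yields $\reg(S/(I^t:x_1x_3))=2\omega(t-1)-1\le 2\omega t-5$, and your argument closes with no combinatorics of minimal generators at all.

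The genuinely hard case is the opposite one, ${\bf w}_2=1$, which your sketch does not resolve: there $f_1f_2/(x_1x_3)=x_1^{\omega-1}x_2^{\omega+1}\notin I$, the mixed monomials $x_1^{a\omega-1}x_2^{a\omega+b}x_3^{b-1}$ (with $a+b=t$, $a,b\ge 1$) really are new minimal generators of $I^t:x_1x_3$, and no induction hypothesis on lower powers of $I$ applies to this colon. The paper abandons the $x_1x_3$-colon route entirely in this case: it writes $I=((x_1x_2)^{\omega})+Jx_3$ with $J=(x_1,x_2)$, filters by $P_j=I^t:x_3^j$ and $Q_j=P_j+(x_3)$, observes $P_t=J^t$ and $Q_j=(x_1x_2)^{\omega(t-j)}J^j+(x_3)$, bounds $\reg(S/Q_j)\le 2\omega t-j-1$ using Lemma \ref{product} (note $\dim(S/J^j)\le 1$) together with Lemma \ref{sum1}, handles $P_t=J^t$ by Lemma \ref{regular}, and then runs $t$ exact sequences down the filtration. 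Your proposed fallback, the ``special ordering'' machinery, cannot be invoked here: Setting \ref{1-edge-weighted} and Theorems \ref{unique}--\ref{Ji} are developed only for $n\ge 4$ with a single non-trivial edge, and they feed Theorem \ref{n-cycle3}, not the $3$-cycle. So to complete your proof you must either prove the ${\bf w}_2=1$ estimate by hand (for instance by the paper's filtration) or adopt the paper's case split; as written, the argument is missing precisely in the case you planned to reach by the ordering technique, and the case you flagged as delicate is in fact immediate.
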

\begin{proof} Let $I=I(C_{\bf w}^3)$ and  $\omega={\bf w}_1$,  then ${\bf w}_2=1$ or ${\bf w}_3=1$ by Lemma \ref{integral}. We can assume ${\bf w}_3=1$ due to symmetry.
 We distinguish between the following two cases:

(i) If ${\bf w}_2=1$,  then $I^t=((x_1x_2)^{{\bf w}_1}+Jx_3)^{t}=((x_1x_2)^{t{\bf w}_1})+\sum\limits_{i=1}^t(x_1x_2)^{{\bf w}_1(t-i)}(Jx_3)^i$, where $J=(x_1,x_{2})$.
	In this case, we set  $P_0=I^t$, $P_j=I^t:x_3^j$ and $Q_{j-1}=P_{j-1}+(x_3)$ for all $j\in [t]$, then $P_t=J^t$, $P_j=(x_1x_2)^{{\bf w}_1t}+\sum\limits_{i=1}^{j-1}(x_1x_2)^{{\bf w}_1(t-i)}J^i+(x_1x_2)^{{\bf w}_1 (t-j)}J^j+\sum\limits_{i=j+1}^t(x_1x_2)^{{\bf w}_1(t-i)}J^ix_3^{i-j}=(x_1x_2)^{{\bf w}_1 (t-j)}J^j+\sum\limits_{i=j+1}^t(x_1x_2)^{{\bf w}_1(t-i)}J^ix_3^{i-j}$
		for all $j\in [t-1]$, and $Q_{j-1}=(x_1x_2)^{{\bf w}_1 (t-j+1)}J^{j-1}+(x_3)$ for all $j\in [t]$. Thus $\reg(S/Q_0)=\reg(Q_0)-1=2t{\bf w}_1-1$   and $\reg(S/P_t)=\reg(S/J^t)=t-1$ by Lemma \ref{regular}.
Note that $\dim(S/J^j)=1$	for all $j\in[t-1]$. Then, by Lemmas \ref{product}, and  \ref{sum1}(1), we have
	\begin{align*}
		\reg(S/Q_j) &=\reg((x_1x_2)^{{\bf w}_1 (t-j)}J^{j})-1\\
		&\leq\reg((x_1x_2)^{{\bf w}_1 (t-j)})+\reg(J^j)-1\\
		&=2{\bf w}_1(t-j)+j-1\\
		&\leq 2{\bf w}_1t-j-1.
	\end{align*}
	 Using  Lemma \ref{exact} to the following exact sequences
	\begin{gather*}
		\hspace{1.0cm}\begin{matrix}
			0 & \longrightarrow & \frac{S}{P_1}(-1)  & \stackrel{ \cdot x_3} \longrightarrow  &  \frac{S}{I^t} & \longrightarrow &\frac{S}{Q_0} & \longrightarrow & 0,\\
			0 & \longrightarrow & \frac{S}{P_2}(-1) & \stackrel{ \cdot x_3} \longrightarrow & \frac{S}{P_1} &\longrightarrow & \frac{S}{Q_1} & \longrightarrow & 0,  \\
			&  &\vdots&  &\vdots&  &\vdots&  &\\
			0 & \longrightarrow & \frac{S}{P_{t-1}}(-1) & \stackrel{ \cdot x_3} \longrightarrow & \frac{S}{P_{t-2}} &\longrightarrow & \frac{S}{Q_{t-2}} & \longrightarrow & 0,  \\
			0 & \longrightarrow & \frac{S}{P_t}(-1) & \stackrel{ \cdot x_3} \longrightarrow & \frac{S}{P_{t-1}} &\longrightarrow & \frac{S}{Q_{t-1}} & \longrightarrow & 0,
		\end{matrix}
	\end{gather*}
	we get the desired results.

(ii) If ${\bf w}_2\geq 2$, then $(I^t: x_1x_3)=I^{t-1}$. In fact,  for any monomial $u \in \mathcal{G}(I^t: x_1x_3)$, we have $ux_1x_3 \in I^t$. We can write $ux_1x_3$ as $ux_1x_3=(x_1^{{\bf w}_1}x_2^{{\bf w}_1})^i(x_2^{{\bf w}_2}x_3^{{\bf w}_2})^j(x_1x_3)^{k}h$,
where $i+j+k=t$. If $k\geq 1$, then $u=(x_1^{{\bf w}_1}x_2^{{\bf w}_1})^i(x_2^{{\bf w}_2}x_3^{{\bf w}_2})^j(x_1x_3)^{k-1}h\in I^{t-1}$.  Otherwise, $i+j=t$. It is obvious that  $u\in I^{t-1}$ if $i=0$ or $j=0$. We can suppose
 $i,j\geq 1$. In this case, $u=(x_1^{{\bf w}_1}x_2^{{\bf w}_1})^{i-1}(x_2^{{\bf w}_2}x_3^{{\bf w}_2})^{j-1}(x_1x_3)(x_1^{{\bf w}_1-2}x_2^{{\bf w}_1})(x_2^{{\bf w}_2}x_3^{{\bf w}_2-2})h\in I^{t-1}$ since ${\bf w}_1\geq 2$ and ${\bf w}_2\geq 2$.

Now we prove  $\reg(S/I^t)=2{\bf w}_1t-1$ by induction on $t$. The case where $t=1$ is verified by  Theorem \ref{smalln}. In the following, we assume that $t\geq 2$. By Lemma \ref{cyclecolon}, we have  $((I^t : x_3),x_1)=((x_2x_3)^{{\bf w}_2}:x_3)+(x_1)=(x_2^{{\bf w}_2}x_3^{{\bf w}_2-1})+(x_1)$ and $(I^t,x_3)=((x_1x_2)^{{\bf w}_1t},x_3)$, thus $\reg(S/((I^t : x_3),x_1))=2{\bf w}_2t-2$, $\reg(S/(I^t ,x_3))=2{\bf w}_1t-1$ and $\reg(S/(I^t : x_1x_3))=2{\bf w}_1(t-1)-1$ by the inductive hypothesis.
	Using
 Lemma \ref{exact} to the following short exact sequences
\begin{gather*}
\hspace{1cm}\begin{matrix}
 0 & \longrightarrow & \frac{S}{I^t : x_3}(-1)  & \stackrel{ \cdot x_3} \longrightarrow  & \frac{S}{I^t} & \longrightarrow & \frac{S}{(I^t,x_3)} & \longrightarrow & 0,\\
 0 & \longrightarrow & \frac{S}{I^t : x_1x_3}(-1)  & \stackrel{ \cdot x_1} \longrightarrow  & \frac{S}{I^t : x_3} & \longrightarrow & \frac{S}{((I^t : x_3),x_1)} & \longrightarrow & 0,
 \end{matrix}
\end{gather*}
we can determine that $\reg (S/I^t)=2{\bf w}_1t-1$.
\end{proof}

Next, we consider the case where $n\geq 4$.

\begin{Lemma}\label{cyclecolon2}
Let $n\geq 4$ be an integer and  $C_{\bf w}^n$  be an $n$-cycle as in Remark \ref{n-cycle}. If ${\bf w}_{i}, {\bf w}_{i+2}\geq 2$ and ${\bf w}_{i+1}=1$ for some $i$, then
\[
I(C^n_{\bf w})^t: x_{i+1}x_{i+2}=I(C^n_{\bf w})^{t-1}
\]
 for all $t\ge 2$.
  \end{Lemma}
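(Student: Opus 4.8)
The plan is to prove the two inclusions separately, modeling the argument on the computation $(I^t : x_1x_3)=I^{t-1}$ carried out in case (ii) of the proof of Theorem \ref{3-cycle}. Write $I=I(C_{\bf w}^n)$ and set $\alpha={\bf w}_i$ and $\beta={\bf w}_{i+2}$, both at least $2$. First I would record that the only generators of $I$ meeting $x_{i+1}$ or $x_{i+2}$ are $x_i^{\alpha}x_{i+1}^{\alpha}$ (edge $e_i$), the trivial generator $x_{i+1}x_{i+2}$ (edge $e_{i+1}$), and $x_{i+2}^{\beta}x_{i+3}^{\beta}$ (edge $e_{i+2}$), since in a cycle each of $x_{i+1},x_{i+2}$ lies on exactly two edges. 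The inclusion $I^{t-1}\subseteq I^t:x_{i+1}x_{i+2}$ is immediate: as $x_{i+1}x_{i+2}\in\mathcal{G}(I)$, every $v\in I^{t-1}$ satisfies $v\,x_{i+1}x_{i+2}\in I^{t-1}I=I^t$.

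For the reverse inclusion I would take a monomial $u\in\mathcal{G}(I^t:x_{i+1}x_{i+2})$, so that $u\,x_{i+1}x_{i+2}\in I^t$, and fix a factorization $u\,x_{i+1}x_{i+2}=g_1\cdots g_t\,h$ with each $g_k\in\mathcal{G}(I)$ and $h$ a monomial. Let $a,b,c$ count the factors $g_k$ equal to $x_i^{\alpha}x_{i+1}^{\alpha}$, $x_{i+1}x_{i+2}$, and $x_{i+2}^{\beta}x_{i+3}^{\beta}$, respectively. If $b\ge 1$, one factor $x_{i+1}x_{i+2}$ cancels against the $x_{i+1}x_{i+2}$ on the left, exhibiting $u$ as a product of $t-1$ generators times $h$, so $u\in I^{t-1}$. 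The heart of the argument is the case $b=0$, where the decisive tool is the identity
\[
x_i^{\alpha}x_{i+1}^{\alpha}\cdot x_{i+2}^{\beta}x_{i+3}^{\beta}=(x_{i+1}x_{i+2})\,(x_{i+1}x_{i+2})\,\bigl(x_i^{\alpha}x_{i+1}^{\alpha-2}x_{i+2}^{\beta-2}x_{i+3}^{\beta}\bigr),
\]
whose last factor is a genuine monomial precisely because $\alpha,\beta\ge 2$; this is exactly where the hypothesis ${\bf w}_i,{\bf w}_{i+2}\ge 2$ is used. When $a,c\ge1$ I substitute this identity for one occurrence of the product of the two nontrivial generators: the first $x_{i+1}x_{i+2}$ cancels the colon, the second is a fresh copy of the trivial generator, and the result is a product of $t-1$ generators, so $u\in I^{t-1}$.

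The remaining sub-cases of $b=0$ are $a=0$ and $c=0$. If $a=0$, then since no generator supplies $x_{i+1}$, the factor $x_{i+1}$ must divide $h$; I would then either cancel $x_{i+1}x_{i+2}$ directly against $h$ (when $x_{i+2}\mid h$ too), landing in $I^t\subseteq I^{t-1}$, or, when $x_{i+2}$ instead comes from a copy of $x_{i+2}^{\beta}x_{i+3}^{\beta}$, borrow its $x_{i+2}$ and write the remainder $x_{i+2}^{\beta-1}x_{i+3}^{\beta}$ (a monomial since $\beta\ge1$), again leaving $t-1$ generators. The case $c=0$ is symmetric, borrowing an $x_{i+1}$ from a copy of $x_i^{\alpha}x_{i+1}^{\alpha}$. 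Since $b=0$ forces $a=0$, $c=0$, or $a,c\ge1$, these possibilities are exhaustive, which completes the inclusion $I^t:x_{i+1}x_{i+2}\subseteq I^{t-1}$.

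I expect the degenerate sub-cases $a=0$ and $c=0$ to be the main obstacle to a clean write-up, since there one cannot invoke the symmetric displayed identity and must instead track whether each of $x_{i+1},x_{i+2}$ is supplied by $h$ or by a generator, verifying in every configuration that exactly $t-1$ generators remain and that all extracted exponents stay nonnegative. Organizing this bookkeeping so that every $b=0$ configuration is covered without overlap, while consistently using $\alpha,\beta\ge2$ to guarantee the leftover monomials are legitimate, is the one delicate point; the generic case $a,c\ge1$ is governed entirely by the displayed identity and is routine.
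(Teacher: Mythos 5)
Your proposal is correct and follows essentially the same route as the paper's proof: factor $u\,x_{i+1}x_{i+2}$ into $t$ generators times a monomial, analyze where $x_{i+1}$ and $x_{i+2}$ are supplied (from $h$, from a copy of the trivial generator, or from the two nontrivial generators), and in the decisive case use the identity rewriting $x_i^{\alpha}x_{i+1}^{\alpha}x_{i+2}^{\beta}x_{i+3}^{\beta}/(x_{i+1}x_{i+2})$ as $(x_{i+1}x_{i+2})$ times a monomial, valid exactly because ${\bf w}_i,{\bf w}_{i+2}\ge 2$. Your bookkeeping via the counts $a,b,c$ is just a reorganization of the paper's case split (divisor of $h$ / same generator / distinct generators), so the two arguments coincide in substance.
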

\begin{proof}It is clear that $I(C^n_{\bf w})^{t-1}\subseteq I(C^n_{\bf w})^{t}:x_{i+1}x_{i+2}$.  For any monomial $u \in \mathcal{G}(I(C^n_{\bf w})^t: x_{i+1}x_{i+2})$, we have  $ux_{i+1}x_{i+2} \in I(C^n_{\bf w})^t$. Let
\begin{gather}\label{eqn: equality2}
ux_{i+1}x_{i+2}=u_{\ell 1} \cdots u_{\ell t}h
\end{gather}
  where each $u_{\ell j} \in \mathcal{G}(I(C^n_{\bf w}))$  and $h$ is a  monomial. If $x_{i+1}| h$ or $x_{i+2}| h$, then $u\in I(C^n_{\bf w})^{t-1}$ by Eq.(\ref{eqn: equality2}). Otherwise, $x_{i+1}| u_{\ell j_1}$ and $x_{i+2}| u_{\ell j_2}$ for some  $j_1,j_2\in[t]$. If $j_1=j_2$, then $u\in I(C^n_{\bf w})^{t-1}$ by Eq.(\ref{eqn: equality2}).  Otherwise, we have $u_{\ell j_1}=(x_{i}x_{i+1})^{{\bf w}_{i}}$ and $u_{\ell j_2}=(x_{i+2}x_{i+3})^{{\bf w}_{i+2}}$ since $N_G(x_{i+1})=\{x_{i},x_{i+2}\}$ and $N_G(x_{i+2})=\{x_{i+1},x_{i+3}\}$. It follows that $(u_{\ell j_1}u_{\ell j_2})/(x_{i+1}x_{i+2})=(x_{i+1}x_{i+2})(x_{i}^{{\bf w}_{i}}x_{i+1}^{{\bf w}_{i}-2})(x_{i+2}^{{\bf w}_{i+2}-2}x_{i+3}^{{\bf w}_{i+2}})\in I(C^n_{\bf w})$ since ${\bf w}_{i},{\bf w}_{i+2}\geq 2$. This implies that
$u\in I(C^n_{\bf w})^{t-1}$ by Eq.(\ref{eqn: equality2}).
\end{proof}

\begin{Theorem}
\label{n-cycle2} Let $n\geq 4$ be an integer and  $C_{\bf w}^n$  be an $n$-cycle as in Remark \ref{n-cycle}.
If there exist at least two edges with non-trivial weights, then, for all $t\ge 1$,
\[
\reg (S/I(C_{\bf w}^n)^t)=2\omega t+\lfloor \frac{n}{3} \rfloor-2
\]
where $\omega=\max\{{\bf w}_1,\ldots,{\bf w}_n\}$.
\end{Theorem}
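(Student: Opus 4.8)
The plan is to induct on $t$, after first fixing the shape of the weight function. By Lemma \ref{integral} and Corollary \ref{cycleinteg}, a non-trivial integrally closed cycle with at least two heavy edges must have its heavy edges pairwise at distance two: two adjacent heavy edges produce a forbidden $P_{\bf w}^3$, while two heavy edges whose four endpoints span no further edge produce a forbidden $P_{\bf w}^2\sqcup P_{\bf w}^2$. So, up to the cyclic symmetry, I may assume $\omega={\bf w}_1$ and that the heavy edges are exactly $e_1,e_3$ with ${\bf w}_1\ge {\bf w}_3\ge 2$ (together with $e_5$ in the single exceptional case $n=6$), all other edges being trivial. Write $I=I(C_{\bf w}^n)$. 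The base case $t=1$ is precisely Theorems \ref{smalln} and \ref{bign}, so I assume $t\ge 2$ and that the formula holds for $t-1$.

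The engine is the short exact sequence \eqref{eqn: equality1} with $i=3$,
\[
0 \to \frac{S}{I^t : x_3}(-1) \xrightarrow{\cdot x_3} \frac{S}{I^t} \to \frac{S}{(I^t,x_3)} \to 0.
\]
Deleting the vertex $x_3$ breaks the cycle into the path $P$ on $\{x_4,\dots,x_n,x_1,x_2\}$ on which $e_1$ (weight $\omega$) is the unique surviving heavy edge, so $(I^t,x_3)=(I(P)^t,x_3)$ and Lemmas \ref{nontrivialpath1} and \ref{nontrivialpath2} give $\reg(S/(I^t,x_3))=2(t-1)\omega+\reg(S_P/I(P))=2\omega t+\lfloor \frac{n}{3}\rfloor-2$, exactly the claimed value. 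By Lemma \ref{exact}(1) it therefore suffices to establish the strict domination $\reg(S/(I^t:x_3))\le 2\omega t+\lfloor \frac{n}{3}\rfloor-3$, since then the two outer terms differ and equality is forced onto $\reg(S/I^t)$.

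To bound the colon I split once more, on $x_2$:
\[
0 \to \frac{S}{I^t : x_2x_3}(-1) \xrightarrow{\cdot x_2} \frac{S}{I^t : x_3} \to \frac{S}{((I^t:x_3),x_2)} \to 0.
\]
Here $I^t:x_2x_3=I^{t-1}$ by Lemma \ref{cyclecolon2} (the heavy edges $e_1,e_3$ flank the trivial edge $e_2$), so the kernel has regularity $2\omega(t-1)+\lfloor \frac{n}{3}\rfloor-1$ by the inductive hypothesis, safely below the target. For the cokernel, the symmetric form of Lemma \ref{cyclecolon} gives $((I^t:x_3),x_2)=((I(C_{\bf w}^n\setminus x_2)^t:x_3),x_2)$; as $x_2$ is now an extraneous variable, this reduces matters to $\reg(S_Q/(I(Q)^t:x_3))$, where $Q=C_{\bf w}^n\setminus x_2$ is a path having $x_3$ as an endpoint joined to the rest only through $e_3=(x_3x_4)^{{\bf w}_3}$. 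Lemma \ref{exact}(1) applied to the $x_2$-sequence then delivers $\reg(S/(I^t:x_3))\le 2\omega t+\lfloor \frac{n}{3}\rfloor-3$ as soon as this path colon is controlled.

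The one genuinely new computation — and the step I expect to be the main obstacle — is thus the regularity of the colon of a path power by an endpoint variable. Because $x_3$ meets $Q$ only in $e_3$, one checks that $I(Q)^t:x_3=I(Q')^t+x_3^{{\bf w}_3-1}x_4^{{\bf w}_3}\,I(Q)^{t-1}$ with $Q'=Q\setminus x_3$ trivial; the second summand, being $I(Q)^{t-1}$ shifted by $2{\bf w}_3-1$, dominates, and Lemmas \ref{nontrivialpath1}--\ref{path} put the trivial term well below it. Cleaner still is to bypass this intersection analysis using the sequence $0\to \frac{S_Q}{I(Q)^t:x_3}(-1)\xrightarrow{\cdot x_3}\frac{S_Q}{I(Q)^t}\to \frac{S_Q}{(I(Q)^t,x_3)}\to 0$ together with the standard bound $\reg(\ker)\le\max\{\reg(\mathrm{mid}),\reg(\mathrm{coker})+1\}$: since $\reg(S_Q/I(Q)^t)=2t{\bf w}_3+\lfloor \frac{n}{3}\rfloor-2$ dominates the quotient modulo $x_3$, one gets $\reg(S_Q/(I(Q)^t:x_3))\le 2t{\bf w}_3+\lfloor \frac{n}{3}\rfloor-3\le 2\omega t+\lfloor \frac{n}{3}\rfloor-3$ via ${\bf w}_3\le\omega$. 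Feeding this back through the two sequences and Lemma \ref{exact}(1) yields $\reg(S/I^t)=2\omega t+\lfloor \frac{n}{3}\rfloor-2$. The three-heavy-edge configuration at $n=6$ runs verbatim, the only change being that $P$ and $Q$ now each carry two heavy edges, for which the path lemmas still return the same values with $\omega$ (respectively ${\bf w}_3$) as the governing weight.
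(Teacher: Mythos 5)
Your proposal is correct and takes essentially the same route as the paper's own proof: induction on $t$ with base case Theorems \ref{smalln} and \ref{bign}, the two nested short exact sequences (colon by $x_3$, then by $x_2$), Lemmas \ref{cyclecolon2} and \ref{cyclecolon} to identify $I^t:x_2x_3=I^{t-1}$ and to pass to the path $C_{\bf w}^n\setminus x_2$, and the same auxiliary exact sequence on that path with the standard kernel bound $\reg(\ker)\le\max\{\reg(\mathrm{mid}),\reg(\mathrm{coker})+1\}$. The only differences are cosmetic: you spell out the heavy-edge placement via Corollary \ref{cycleinteg} and the three-heavy-edge case $n=6$, which the paper leaves implicit.
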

 \begin{proof} Let $I=I(C_{\bf w}^n)$ and  $\omega={\bf w}_1$, then we can suppose ${\bf w}_1\ge {\bf w}_3\ge 2$ and  ${\bf w}_2=1$. We prove the statement by induction on $t$.  The case where $t=1$ is verified by Theorems \ref{smalln} and  \ref{bign}. Now, we assume that $t\geq 2$. Since $(I^t,x_3)=(I(C_{\bf w}^n\setminus x_3)^t,x_3)$ and $(I^t : x_2x_3)=I^{t-1}$ by Lemma \ref{cyclecolon2},
 it follows from Lemmas \ref{nontrivialpath1}, \ref{nontrivialpath2} and the inductive hypothesis that
  \begin{align*}
 \reg(S/(I^t,x_3))&=2{\bf w}_1t+\lfloor \frac{n}{3} \rfloor-2,\\
 \reg(S/(I^t : x_2x_3))&=2{\bf w}_1(t-1)+\lfloor \frac{n}{3} \rfloor-1.
 \end{align*}
On the other hand,  by Lemma \ref{cyclecolon}, we have  $((I^t : x_3),x_2)=(I(P_{\bf w}^{n-1})^t:x_3)+(x_2)$ and $(I(P_{\bf w}^{n-1})^t,x_3)=I(P_{\bf w}^{n-2})^t+(x_3)$, where $P_{\bf w}^{n-1}$ and $P_{\bf w}^{n-2}$
 are  induced graphs of $C_{\bf w}^n$ on the sets  $\{x_1,\hat{x}_2,x_3,\ldots,x_n\}$ and  $\{x_1,\hat{x}_2,\hat{x}_3,x_4,\ldots,x_n\}$, respectively, where $\hat{x}_2$ denotes the element $x_2$ which is omitted from the set $\{x_1,x_2,\ldots,x_n\}$. Thus $\reg(S/(I(P_{\bf w}^{n-1})^t,x_3))=2(t-1)+\lfloor \frac{n-1}{3} \rfloor$  and $\reg(S/I(P_{\bf w}^{n-1})^t)=2{\bf w}_3t+\lfloor \frac{n}{3} \rfloor-2$ by Lemmas \ref{path} and  \ref{nontrivialpath2}.  By Lemma \ref{exact} and the short exact sequence
 \begin{gather*}
\hspace{1cm}\begin{matrix}
 0 & \longrightarrow & \frac{S}{I(P_{\bf w}^{n-1})^t : x_3}(-1)  & \stackrel{ \cdot x_3} \longrightarrow  & \frac{S}{I(P_{\bf w}^{n-1})^t} & \longrightarrow & \frac{S}{(I(P_{\bf w}^{n-1})^t,x_3)} & \longrightarrow & 0,
 \end{matrix}
\end{gather*}
 we obtain that
 \begin{align*}
\reg(S/((I^t : x_3),x_2))&=\reg(S/(I(P_{\bf w}^{n-1})^t:x_3)+(x_2)))=\reg(S/(I(P_{\bf w}^{n-1})^t:x_3))\\
&\leq \max \{2{\bf w}_3t+\lfloor \frac{n}{3} \rfloor-2, 2(t-1)+\lfloor \frac{n-1}{3} \rfloor+1\}-1\\
&=2{\bf w}_3t+\lfloor \frac{n}{3} \rfloor-3.
\end{align*}
Again, apply  Lemma \ref{exact} to the following short exact sequences
\begin{gather*}
\hspace{1cm}\begin{matrix}
 0 & \longrightarrow & \frac{S}{I^t : x_3}(-1)  & \stackrel{ \cdot x_3} \longrightarrow  & \frac{S}{I^t} & \longrightarrow & \frac{S}{(I^t,x_3)} & \longrightarrow & 0,\\
 0 & \longrightarrow & \frac{S}{I^t : x_3x_2}(-1)  & \stackrel{ \cdot x_2} \longrightarrow  & \frac{S}{I^t : x_3} & \longrightarrow & \frac{S}{((I^t : x_3),x_2)} & \longrightarrow & 0,
 \end{matrix}
\end{gather*}
we can determine that $\reg (S/I^t)=2{\bf w}_1 t+\lfloor \frac{n}{3} \rfloor-2$.
\end{proof}

\medskip
 We now consider the case where the weight of only one edge is non-trivial. We will follow the technique of \cite{Ba,WZX}. In order to prove Theorem \ref{n-cycle3}, we will define a specific order with a nice property in the set $\mathcal{G}(I(C^n_{\bf w})^t$ of minimal monomial generators of powers of the edge ideal of $C_{{\bf w}}^n$.
 Using this order, we obtain two main results for computing the regularity of powers of the edge ideal of $C_{{\bf w}}^n$.
\begin{Setting}
    \label{1-edge-weighted}
    Let $n\geq 4$ be an integer and  $C_{\bf w}^n$ be  an $n$-cycle as in Remark \ref{n-cycle}. We can write $L_i=(x_{i}x_{i+1})^{{\bf w}_i}$ for $i=1,2,\ldots,n$, where  $x_j=x_i$ if $j\equiv i\  (\text{mod\ }n)$. We always assume by symmetry that ${\bf w}_1 \geq 2$ and ${\bf w}_i=1$ for each $i=2,3,\ldots,n$.  We stipulate an order $L_1>\cdots>L_n$ on the set $\mathcal{G}(I(C_{\bf w}^n))$. For each integer $t \geq 1$, we define an order on the set $\mathcal{G}(I(C_{\bf w}^n)^t)$ as follows:  Let $M, N \in \mathcal{G}(I(C_{\bf w}^n)^t)$ and we write $M, N$ as $M=L_1^{a_1}L_2^{a_2}\cdots L_n^{a_n}$, $N=L_1^{b_1}L_2^{b_2}\cdots L_n^{b_n}$ with $\sum\limits_{i=1}^{n}a_{i}=\sum\limits_{i=1}^{n}b_{i}=t$. We say $M>N$ if and only if  $({a_1},a_2,\ldots,{a_n})>_{lex}({b_1},b_2,\ldots,{b_n})$.  We denote by $L^{(t)}$ the totally ordered set of  $\mathcal{G}(I(C_{\bf w}^n)^t)$, ordered in the way discussed above,  and by $L_k^{(t)}$ the $k$-th element of the set $L^{(t)}$. Sometimes we denote by $L_k$ for $L_k^{(1)}$. It follows from Theorem \ref{unique} that this order is well-defined.
\end{Setting}

\begin{Theorem}\label{unique}
Under the assumptions of Setting \ref{1-edge-weighted}, for each   positive integer $t$ and $M \in \mathcal{G}(I(C_{\bf w}^n)^t)$, then  there exist unique ${a_1},a_2,\ldots,{a_n}$ such that  $M=L_{1}^{a_{1}}L_{2}^{a_{2}}\cdots L_{n}^{a_n}$ with $\sum\limits_{i=1}^{n}a_{i}=t$, where $a_i\geq 0$ for all $1\leq i\leq n$.
\end{Theorem}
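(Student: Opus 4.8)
The plan is to separate existence from uniqueness; existence is immediate and all the content sits in uniqueness. Since $I(C_{\bf w}^n)^t$ is generated by the $t$-fold products of the elements of $\mathcal{G}(I(C_{\bf w}^n))=\{L_1,\ldots,L_n\}$, every minimal generator $M$ equals some product $L_{i_1}\cdots L_{i_t}$; collecting equal factors writes $M=L_1^{a_1}\cdots L_n^{a_n}$ with $a_i\ge 0$ and $\sum_{i=1}^n a_i=t$. So I only need to show that the exponent vector $(a_1,\ldots,a_n)$ is uniquely determined by $M$, which I would do by reading off the exponent of each variable.

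The key computation uses the explicit shape of the generators: because ${\bf w}_1\ge 2$ and ${\bf w}_i=1$ for $i\ge 2$, we have $L_1=x_1^{{\bf w}_1}x_2^{{\bf w}_1}$, $L_n=x_nx_1$, and $L_j=x_jx_{j+1}$ for $2\le j\le n-1$. Hence for $M=L_1^{a_1}\cdots L_n^{a_n}$ the variable exponents are
\begin{align*}
\deg_{x_1}(M) &= {\bf w}_1 a_1 + a_n, \\
\deg_{x_2}(M) &= {\bf w}_1 a_1 + a_2, \\
\deg_{x_j}(M) &= a_{j-1} + a_j \qquad (3\le j\le n).
\end{align*}
Suppose $M=L_1^{a_1}\cdots L_n^{a_n}=L_1^{b_1}\cdots L_n^{b_n}$ with both exponent sums equal to $t$, and set $c_i=a_i-b_i$. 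Then $\sum_{i=1}^n c_i=0$, and comparing the exponents above yields ${\bf w}_1 c_1+c_2=0$ and $c_{j-1}+c_j=0$ for $3\le j\le n$ (the relation ${\bf w}_1 c_1+c_n=0$ coming from $x_1$ is an extra constraint I will not need). Solving the recursion starting from $x_2$ expresses every difference through $c_1$: one gets $c_j=(-1)^{j-1}{\bf w}_1 c_1$ for $2\le j\le n$.

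To finish I would substitute into $\sum_{i=1}^n c_i=0$. A short alternating-sum evaluation gives $\sum_{j=2}^n(-1)^{j-1}=0$ when $n$ is odd and $=-1$ when $n$ is even, so $\sum_{i=1}^n c_i=c_1$ for $n$ odd and $\sum_{i=1}^n c_i=(1-{\bf w}_1)c_1$ for $n$ even. In either case the coefficient of $c_1$ is nonzero, forcing $c_1=0$ and therefore $c_j=0$ for all $j$, i.e. $a_i=b_i$. I expect the only genuine subtlety to be the wrap-around of the cycle: the per-variable relations collapse everything to the single free parameter $c_1$, and it is precisely the degree-sum constraint together with ${\bf w}_1\ne 1$ that eliminates it. This is exactly where the hypothesis ${\bf w}_1\ge 2$ is indispensable — for a trivial even cycle the coefficient $1-{\bf w}_1$ vanishes and uniqueness genuinely fails (for instance $L_1L_3=L_2L_4$ in $C_4$), so the non-triviality assumption cannot be dropped.
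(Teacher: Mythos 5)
Your proof is correct and takes essentially the same route as the paper: both arguments compare the exponent of each variable to obtain the same linear system in the differences $a_i-b_i$, and then invoke the constraint $\sum_{i=1}^{n}a_i=\sum_{i=1}^{n}b_i=t$ together with ${\bf w}_1\ge 2$ to force all differences to vanish. The only cosmetic differences are that you parametrize the solution space by $c_1$ and treat odd and even $n$ uniformly through the sum constraint (discarding the $x_1$-relation), whereas the paper parametrizes by $a_n-b_n$ and settles odd $n$ by observing that the coefficient matrix has nonzero determinant; your remark that $L_1L_3=L_2L_4$ in a trivial $C_4$ nicely confirms that the hypothesis ${\bf w}_1\ge 2$ cannot be dropped.
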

\begin{proof}
Let $M=L_{1}^{a_{1}}L_{2}^{a_{2}}\cdots L_{n}^{a_n}=L_1^{b_1}L_2^{b_2}\cdots L_n^{b_n}$ be two expressions of $M$ with  $\sum\limits_{i=1}^{n}a_{i}=\sum\limits_{i=1}^{n}b_{i}=t$ and $a_i,b_i\geq 0$ for all $1\leq i\leq n$.
By comparing the degree of each variable in the expressions of $M$, we can conclude that
\[
\left\{
\begin{array}{ccccc}
{\bf w}_1(a_1-b_1)&+&(a_n-b_n)&=&0\\
{\bf w}_1(a_1-b_1)&+& (a_2-b_2)&=&0\\
 (a_2-b_2)&+& (a_3-b_3)&=&0\\
  &\vdots& &&\vdots\\
(a_{n-1}-b_{n-1})&+&(a_n-b_n)&=&0
\end{array}
	\right.
\]
Let \[
	\mathbf{A}=
	\begin{bmatrix}
		{\bf w}_1& 0 &0&\ldots&0&0&1\\
		{\bf w}_1&1&0&\ldots&0&0&0\\
		0&1&1&\ldots &0&0&0\\
		\vdots & \vdots & \vdots& \ddots& \vdots& \vdots& \vdots\\
		0&0&0&\ldots &1&1&0\\
		0&0&0&\ldots &0&1&1\\
	\end{bmatrix},
	\]
then it is easy to see that its determinant $|A|\neq 0$ if $n$ is odd. If $n$ is even, then the solution of the above system of equations with $A$ as the coefficient matrix is
 $a_1-b_1=\frac{-1}{{\bf w}_1}(a_n-b_n)$ and $a_i-b_i=(-1)^{i}(a_n-b_n)$ for each $2\le i\le n$. Thus $\sum\limits_{i=1}^{n}(a_i-b_i)=\frac{-1}{{\bf w}_1}(a_n-b_n)+\sum\limits_{i=2}^{n}(-1)^{i}(a_n-b_n)=(1-\frac{1}{{\bf w}_1})(a_n-b_n)$.
 Since $\sum\limits_{i=1}^{n}a_{i}=\sum\limits_{i=1}^{n}b_{i}$, we have $a_n=b_n$, it follows that $a_i=b_i$ for all $i\in [n]$, since $a_1-b_1=\frac{-1}{{\bf w}_1}(a_n-b_n)$ and $a_i-b_i=(-1)^{i}(a_n-b_n)$ for all $2\le i\le n$.
\end{proof}

\begin{Definition}\label{edgedivide}
	Let $M_1 \in \mathcal{G}(I(C_{{\bf w}}^n)^k)$, $M_2 \in \mathcal{G}(I(C_{{\bf w}}^n)^t)$ with $1 \leq k \leq t$. We say that $M_1$ divides $M_2$ as an edge, denoted by $M_1|^{edge}M_2$, if  there exists an element $M_3 \in \mathcal{G}(I(C_{{\bf w}}^n)^{t-k})$ such that $M_2=M_1M_3$. Otherwise, we  denote by $M_1\nmid\!^{edge}M_2$.
\end{Definition}

For a monomial $u=x_1^{a_1}\cdots x_n^{a_n}$ with $a_i\ge 0$ for all $i\in [n]$, we set $\deg_{x_i}(u)=a_i$ and $\supp(u)=\{x_i: a_i>0\ \text{for\ }i\in [n]\}$. The following two theorems are the most important technical results in the following section.

\begin{Lemma}\label{t=2}
Under the assumptions of Setting \ref{1-edge-weighted},	 we consider the case where $t=2$. If $L_{i}^{(2)}, L_{j}^{(2)}\in L^{(2)}$ with $L_{i}^{(2)}>L_{j}^{(2)}$ and $L_1|^{edge}L_{i}^{(2)}$, then there exists some $L_{k}^{(2)}\in L^{(2)}$ such that  $L_{i}^{(2)}>L_{k}^{(2)}$ with $(L_{j}^{(2)}:L_{i}^{(2)})\subseteq (L_{k}^{(2)}:L_{i}^{(2)})$ and $(L_{k}^{(2)}:L_{i}^{(2)})$ has one of the following two forms:
	\begin{itemize}
		\item[(1)] $(L_{k}^{(2)}:L_{i}^{(2)})=(L_{\ell_2}:L_{\ell_1})$ with $L_{\ell_1}>L_{\ell_2}$,  $L_{\ell_2}|^{edge}L_{k}^{(2)}$ and $L_{\ell_1}|^{edge}L_{i}^{(2)}$;
		\item[(2)] $(L_{k}^{(2)}:L_{i}^{(2)})=(x_{n-2})$ with $L_{n-2}|^{edge}L_{k}^{(2)}$, $L_n|^{edge}L_{k}^{(2)}$, $L_1|^{edge}L_{i}^{(2)}$ and $L_{n-1}|^{edge}L_{i}^{(2)}$.
	\end{itemize}
\end{Lemma}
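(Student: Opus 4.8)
The plan is to recast the statement as a divisibility problem among monomials. Using $L_1|^{edge}L_i^{(2)}$ I would first write $L_i^{(2)}=L_1L_p$ for the unique index $p$ supplied by Theorem \ref{unique}, and recall that for monomials the colon $(L_j^{(2)}:L_i^{(2)})$ is the principal ideal generated by $w:=L_j^{(2)}/\gcd(L_i^{(2)},L_j^{(2)})$. Then the containment $(L_j^{(2)}:L_i^{(2)})\subseteq(L_k^{(2)}:L_i^{(2)})$ is exactly the assertion that the generator of $(L_k^{(2)}:L_i^{(2)})$ divides $w$, so the whole task is to exhibit one $L_k^{(2)}<L_i^{(2)}$ whose colon generator divides $w$ and has the prescribed shape. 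Two elementary facts drive everything: first, $\gcd(L_1M,L_1N)=L_1\gcd(M,N)$, so a common $L_1$-factor cancels inside a colon; and second, since ${\bf w}_1\ge 2$ while every $L_s$ with $s\ge 2$ is trivial, the variables $x_1,x_2$ can never survive in $w$, i.e. $\supp(w)\subseteq\{x_3,\ldots,x_n\}$. I would also record the order facts $L_cL_p<L_1L_p$ for $c\ge 2$ and $L_1L_{\ell_2}<L_1L_p\iff\ell_2>p$, both immediate from comparing exponent vectors lexicographically.

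The easy branch is $L_1|^{edge}L_j^{(2)}$. Writing $L_j^{(2)}=L_1L_q$, the inequality $L_i^{(2)}>L_j^{(2)}$ forces $p<q$ (hence $L_p>L_q$), and the cancellation fact gives $(L_j^{(2)}:L_i^{(2)})=(L_q:L_p)$ exactly; taking $L_k^{(2)}=L_j^{(2)}$ lands directly in form (1) with $\ell_1=p$, $\ell_2=q$ and common factor $L_1$. The substance lies in the branch $L_1\nmid^{edge}L_j^{(2)}$, where I write $L_j^{(2)}=L_aL_b$ with $a,b\ge 2$ and read off $\supp(w)$ from the position of $\{x_a,x_{a+1},x_b,x_{b+1}\}$ relative to the absorbing set $\{x_1,x_2,x_p,x_{p+1}\}$ of $L_i^{(2)}$. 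For a surviving vertex $x_s$ I would produce a special colon dividing $w$ as follows: if a whole edge $L_s$ survives ($3\le s\le n-1$), then $(L_sL_p:L_1L_p)=(L_s:L_1)=(x_sx_{s+1})$ divides $w$, giving form (1) with $\ell_1=1$ (so the order constraint $L_{\ell_1}>L_{\ell_2}$ is automatic); a degree-two survivor $x_{p-1}x_p$ is likewise caught by the $\ell_1=1$ colon $(L_{p-1}:L_1)=(x_{p-1}x_p)$. If only a single vertex survives with its absorbed neighbour on the $\{x_1,x_2\}$-side or on the far side $x_{p+1}$ of $L_p$, then the one-variable colons $(L_2:L_1)=(x_3)$, $(L_n:L_1)=(x_n)$, or $(L_{p+1}:L_p)=(x_{p+2})$ (the last with $\ell_2=p+1>p$) finish the job, again via form (1).

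The one configuration resisting this scheme is a single surviving vertex $x_{p-1}$ whose absorbed neighbour is $x_p$: the only natural colon is $(L_{p-1}:L_p)=(x_{p-1})$, where $\ell_2=p-1<p$ violates $L_{\ell_1}>L_{\ell_2}$, so form (1) is genuinely unavailable, and because $w=x_{p-1}$ is a single variable there is no alternative vertex to target. I expect this to be the main obstacle, and the crux is to show it forces the wrap-around position. For $w$ to equal the pure variable $x_{p-1}$ the partner edge must be \emph{entirely} absorbed by $\{x_1,x_2,x_p,x_{p+1}\}$, and the only edges with both endpoints in this set besides $L_1$ (excluded) and $L_p$ are the ``bridge'' edges, which exist solely when $p=3$ or $p=n-1$; the weighting ${\bf w}_1\ge 2$ eliminates the $p=3$ option, since there the would-be survivor is $x_2\notin\supp(w)$. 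This leaves exactly $p=n-1$ with $L_j^{(2)}=L_{n-2}L_n$, where $L_n=x_nx_1$ is swallowed by $L_1$ together with $L_{n-1}$, so that $(L_{n-2}L_n:L_1L_{n-1})=(x_{n-2})$ and $L_k^{(2)}=L_{n-2}L_n$ realises form (2); the remaining bookkeeping ($L_k^{(2)}<L_i^{(2)}$ and the four edge-divisibilities) is immediate. To close, I would verify that the vertex-by-vertex split is exhaustive—every nonempty $w$ offers a targetable vertex, degree-two survivors are always covered by an $\ell_1=1$ edge colon $(L_c:L_1)$—so that outside the single wrap-around case form (1) always applies.
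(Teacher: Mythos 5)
Your proof is correct, and at its core it is the same kind of argument as the paper's: an explicit case analysis that, for each pair $L_i^{(2)}>L_j^{(2)}$, exhibits a smaller generator $L_k^{(2)}$ realizing one of a short list of single-edge colons, and that isolates exactly the same exceptional pair $L_i^{(2)}=L_1L_{n-1}$, $L_j^{(2)}=L_{n-2}L_n$ as the unique source of form (2). The genuine difference is the axis of the case split: the paper indexes its cases by divisibility data (whether some edge of $L_i^{(2)}$ edge-divides $L_j^{(2)}$, whether $\supp(L_{j_\ell})$ meets $\supp(L_i^{(2)})$, and whether $\deg_{x_{j_\ell+1}}(L_j^{(2)})>\deg_{x_{j_\ell+1}}(L_i^{(2)})$), whereas you index them by the support of the colon generator $w=L_j^{(2)}/\gcd(L_i^{(2)},L_j^{(2)})$, i.e.\ by which vertices ``survive''. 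Your bookkeeping makes exhaustiveness more transparent (a nonempty $w$ with $\supp(w)\subseteq\{x_3,\dots,x_n\}$ must contain a whole trivial edge, or one of $x_3$, $x_n$, $x_{p+2}$, or only $x_{p-1}$, and the last possibility is forced into the wrap-around position) and it pinpoints where ${\bf w}_1\ge 2$ enters (it deletes $x_1,x_2$ from $w$, which is what kills the $p=3$ configuration); the paper's bookkeeping avoids enumerating survivor patterns but leaves the no-missing-case check more opaque. Two repairs are needed in a final write-up. First, your blanket claim $\supp(w)\subseteq\{x_3,\dots,x_n\}$ is false in the easy branch $L_1|^{edge}L_j^{(2)}$ (e.g.\ $L_i^{(2)}=L_1L_2$, $L_j^{(2)}=L_1L_n$ gives $w=x_nx_1$); it holds, and is only used, in the branch $L_1\nmid^{edge}L_j^{(2)}$, where $x_1$ and $x_2$ each have degree at most $2\le {\bf w}_1$ in $L_j^{(2)}$, so state it there. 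Second, for $n=4$ and $p=2$ your far-side choice $L_k^{(2)}=L_1L_3$ lies outside $L^{(2)}$ (it is divisible by $L_2L_4$, hence not a minimal generator of $I(C_{\bf w}^4)^2$); in that corner $x_{p+2}=x_n$, so one must instead invoke your $(L_n:L_1)=(x_n)$ option with $L_k^{(2)}=L_nL_p$ — a blind spot the paper's own choices share (e.g.\ its case (I) with $L_i^{(2)}=L_1^2$ and $n=4$ also selects $L_1L_3$), so this is a shared blemish rather than a defect of your approach.
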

\begin{proof}
Let $L_{j}^{(2)}=L_{j_1}L_{j_2}$ and
$L_{i}^{(2)}= L_{i_1}L_{i_2}$ with $i_1=1$,  where $1 \leq  j_1\leq j_{2} \leq n$ and $1 \leq  i_2\leq n$. If  $L_{i_a}|^{edge}L_j^{(t)}$ for some $a\in [2]$,  then we choose $L_k^{(2)}=L_{j}^{(2)}$.
Thus $(L_j^{(2)}:L_i^{(2)})=(L_{k}^{(2)}:L_{i}^{(2)})=(\frac{L_{j}^{(2)}}{L_{i_a}}:\frac{L_{i}^{(2)}}{L_{i_a}})$ with $\frac{L_{i}^{(2)}}{L_{i_a}}>\frac{L_{j}^{(2)}}{L_{i_a}}$.
 If $L_{i_a}\!\nmid^{edge}\! L_j^{(2)}$ for any $a\in[2]$, then $j_1,j_2\ge 2$. We consider the following two cases:

 (I) If $\supp(L_{j_\ell})\cap \supp(L_{i}^{(2)})=\emptyset$ for some $\ell\in [2]$, then we choose $L_k^{(2)}=\frac{L_{i}^{(2)}}{L_{i_1}} L_{j_\ell}$.  Thus $(L_j^{(2)}:L_i^{(2)})\subseteq L_{j_\ell}$ and $(L_{k}^{(2)}:L_{i}^{(2)})=(L_{j_\ell}:L_{i_1})=L_{j_\ell}$ with $L_{i_1}>L_{j_\ell}$.

 (II) If $\supp(L_{j_\ell})\cap \supp(L_{i}^{(2)})\ne \emptyset$ for all $\ell\in [2]$. The case where  $L_{j_\ell}|^{edge}L_{i}^{(2)}$ for some $\ell\in [2]$ follows from the condition that $L_{i_a}|^{edge}L_j^{(t)}$ for some $a\in [2]$. Now we assume that $L_{j_\ell}\nmid^{edge}L_{i}^{(2)}$ for all $\ell\in [2]$.
We have two subcases:

(i) If $\deg_{x_{j_{\ell}+1}}(L_j^{(2)})>\deg_{x_{j_{\ell}+1}}(L_i^{(2)})$ for  some $\ell\in[2]$, then  $L_{j_{\ell}'}|^{edge}L_{i}^{(2)}$ for some $j_{\ell}'\in\{j_{\ell}-1,j_{\ell}+1\}$ since $\supp(L_{j_{\ell}})\cap \supp(L_{i}^{(2)})\neq\emptyset$ for all ${\ell}\in[2]$. We also have two subcases:

(a) If $L_{j_{\ell}-1}|^{edge}L_{i}^{(2)}$, then we choose $L_k^{(2)}=\frac{L_{i}^{(2)}}{L_{j_{\ell}-1}}L_{j_{\ell}}$. Thus  $(L_j^{(2)}:L_i^{(2)})\subseteq (x_{j_{\ell}+1})$ and $(L_{k}^{(2)}:L_{i}^{(2)})=(L_{j_{\ell}}:L_{j_{\ell}-1})=(x_{j_{\ell}+1})$.

(b) If $L_{j_{\ell}-1}\!\nmid^{edge}\!L_{i}^{(2)}$ and  $L_{j_{\ell}+1}|^{edge}L_{i}^{(t)}$, then we choose $L_k^{(2)}=L_{j_{\ell}}L_{j_{\ell}+1}$. Thus  $(L_j^{(2)}:L_i^{(2)})\subseteq L_{j_{\ell}}$ and $(L_{k}^{(2)}:L_{i}^{(2)})=(L_{j_{\ell}}:L_{i_1})=L_{j_{\ell}}$.

(ii) If $\deg_{x_{j_{\ell}+1}}(L_j^{(2)})\leq\deg_{x_{j_{\ell}+1}}(L_i^{(2)})$ for all ${\ell}\in[2]$,  then  $L_{j_{\ell}+1}|^{edge}L_{i}^{(t)}$. Claim:  $j_{2}=n$.
In fact, if $j_{2}\ne n$, then $j_{2}+1\not\equiv 1\  (\text{mod\ }n)$, that is $L_{j_{2}+1}\neq L_{i_1}$
and $L_{j_{1}+1}\neq L_{i_1}$ since $2 \leq  j_1\leq j_{2}<n$, contradicting  the assumption  that $L_{i_1}|^{edge}L_{i}^{(2)}$. Therefore, $L_n|^{edge}L_{j}^{(2)}$ and $L_{i}^{(2)}=L_{j_{1}+1}L_{i_1}$.
 We also have two subcases:

(a) If  $j_{1}=n-2$, then  $L_{n-1}|^{edge}L_{i}^{(2)}$ and $L_{n-2}|^{edge}L_{j}^{(2)}$. In this case, we choose $L_k^{(2)}=L_{j}^{(2)}$, thus $(L_{k}^{(2)}:L_{i}^{(2)})=(x_{n-2})$ with  $L_n|^{edge}L_{k}^{(2)}$,  $L_{n-1}|^{edge}L_{i}^{(2)}$.

(b) If  $j_{1}\ne n-2$, then  $ j_{1}+1\ne n-1$, i.e., $L_{n-1}\nmid\!^{edge}L_{i}^{(2)}$. Note that $L_n\nmid\!^{edge}L_{i}^{(2)}$ since $j_2=n$,  $L_n|^{edge}L_{j}^{(2)}$ and the assumption that  $L_{j_\ell}\nmid^{edge}L_{i}^{(2)}$ for all $\ell\in [2]$. Therefore,
 $x_n\nmid\!L_{i}^{(t)}$, which forces $(L_j^{(2)}:L_i^{(2)})\subseteq (x_{n})$. In this case, we choose  $L_k^{(2)}=L_{n}\frac{L_{i}^{(2)}}{L_{i_1}}$, then $(L_{k}^{(2)}:L_{i}^{(2)})=(L_{n}:L_{i_1})=(x_{n})$.
\end{proof}

\begin{Theorem}\label{Li}
    Let $t$ be a positive integer and  let  $L_{i}^{(t)}, L_{j}^{(t)}\in L^{(t)}$ satisfying $L_1|^{edge}L_{i}^{(t)}$ and $L_{i}^{(t)}>L_{j}^{(t)}$. Then there exists some $L_{k}^{(t)}\in L^{(t)}$ with $L_{i}^{(t)}>L_{k}^{(t)}$ such that $(L_{j}^{(t)}:L_{i}^{(t)})\subseteq (L_{k}^{(t)}:L_{i}^{(t)})$ and $(L_{k}^{(t)}:L_{i}^{(t)})$ has one of the following two forms:
    \begin{itemize}
    \item[(1)] $(L_{k}^{(t)}:L_{i}^{(t)})=(L_{\ell_2}:L_{\ell_1})$ with $L_{\ell_1}>L_{\ell_2}$,  $L_{\ell_2}|^{edge}L_{k}^{(t)}$ and $L_{\ell_1}|^{edge}L_{i}^{(t)}$;
    \item[(2)] $(L_{k}^{(t)}:L_{i}^{(t)})=(x_{n-2d})$ for some $d$ with $L_{n-2s}|^{edge}L_{k}^{(t)}$ and $L_{n+1-2s}|^{edge}L_{i}^{(t)}$ for $s=0,1,\ldots, d$.
    \end{itemize}
\end{Theorem}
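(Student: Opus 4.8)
The plan is to induct on $t$, taking Lemma~\ref{t=2} (together with the immediate case $t=1$) as the base and reducing the inductive step to one combinatorial construction. Throughout I will use that, for monomials, $(L_j^{(t)}:L_i^{(t)})$ is the principal ideal generated by $w:=L_j^{(t)}/\gcd(L_i^{(t)},L_j^{(t)})$, so that the containment $(L_j^{(t)}:L_i^{(t)})\subseteq(L_k^{(t)}:L_i^{(t)})$ to be proved amounts to saying that the generator of the right-hand colon divides $w$. I will also record the degree formulas $\deg_{x_1}(M)={\bf w}_1c_1+c_n$, $\deg_{x_2}(M)={\bf w}_1c_1+c_2$, and $\deg_{x_s}(M)=c_{s-1}+c_s$ for $3\le s\le n$, where $M=L_1^{c_1}\cdots L_n^{c_n}$ (indices mod $n$); these turn every colon computation into bookkeeping on the exponent vectors produced by Theorem~\ref{unique}.

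First I would isolate the reduction engine. Write $a=(a_1,\dots,a_n)$ and $b=(b_1,\dots,b_n)$ for the exponent vectors of $L_i^{(t)}$ and $L_j^{(t)}$, so $a_1\ge 1$ and $a>_{lex}b$. Suppose $L_i^{(t)}$ and $L_j^{(t)}$ share an edge $L_p$ that may be cancelled without destroying the hypothesis, i.e.\ $p\neq 1$, or $p=1$ with $a_1\ge 2$. Set $M_i=L_i^{(t)}/L_p$ and $M_j=L_j^{(t)}/L_p$ in $\mathcal G(I(C_{\bf w}^n)^{t-1})$. Since lex order is invariant under subtracting $e_p$ from both vectors, $M_i>M_j$, and $L_1\mid^{edge}M_i$; the inductive hypothesis yields $M_k$ with $M_i>M_k$ and $(M_j:M_i)\subseteq(M_k:M_i)$ of form (1) or (2). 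Using $\gcd(L_pu,L_pv)=L_p\gcd(u,v)$ one has $(L_j^{(t)}:L_i^{(t)})=(M_j:M_i)$ and $(M_k:M_i)=(L_pM_k:L_pM_i)$, so $L_k^{(t)}:=L_pM_k$ works: $L_i^{(t)}>L_k^{(t)}$, the containment holds verbatim, and each divisibility condition $L_\ell\mid^{edge}(\cdot)$ in form (1)/(2) is inherited after multiplying by $L_p$.

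This leaves the residual situation in which the engine does not apply: $L_i^{(t)}$ and $L_j^{(t)}$ share no edge among $L_2,\dots,L_n$, and if they share $L_1$ then $a_1=1$. Here I would argue directly, generalizing the case analysis of Lemma~\ref{t=2}. Examining where $\supp(L_j^{(t)})$ meets $\supp(L_i^{(t)})$ and comparing degrees on the two monomials, I locate either a single swap of one edge $L_{\ell_1}$ for a smaller edge $L_{\ell_2}$ (with $\ell_1<\ell_2$ and $L_{\ell_1}\mid^{edge}L_i^{(t)}$), giving form (1) via $L_k^{(t)}=L_i^{(t)}L_{\ell_2}/L_{\ell_1}$; or a maximal alternating run of top edges $L_1,L_{n-1},L_{n-3},\dots,L_{n+1-2d}$ present in $L_i^{(t)}$ matched against $L_n,L_{n-2},\dots,L_{n-2d}$ in $L_j^{(t)}$, in which case I set
\[
L_k^{(t)}=L_i^{(t)}\cdot\frac{L_nL_{n-2}\cdots L_{n-2d}}{L_1L_{n-1}\cdots L_{n+1-2d}}.
\]
A direct degree count shows the differences $\deg_{x_s}(L_k^{(t)})-\deg_{x_s}(L_i^{(t)})$ telescope to $0$ along the run, leaving the single positive slot $\deg_{x_{n-2d}}=+1$, while the slot at $x_1$ equals $1-{\bf w}_1\le 0$ because ${\bf w}_1\ge 2$; thus $(L_k^{(t)}:L_i^{(t)})=(x_{n-2d})$ is of form (2). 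Moreover $L_k^{(t)}<_{lex}L_i^{(t)}$ since the lowest affected coordinate, index $1$, decreases, and one verifies $x_{n-2d}\mid w$.

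The main obstacle is precisely this residual construction. Pinning down the correct $d$ (the length of the maximal zig-zag) and checking both required properties—that $L_k^{(t)}$ is genuinely smaller than $L_i^{(t)}$ and that its colon generator divides $w$—demands careful, wrap-aware bookkeeping of the supports near $x_1,x_2,x_{n-1},x_n$, where the weighted edge $L_1$ interacts with $L_n$ and $L_2$. This is also the only step that actually uses $t\ge d+1$, so it cannot be produced from the $t=2$ base through the reduction engine alone; it is where form (2) with $d\ge 2$ is created. I therefore expect the bulk of the argument to be an organized enumeration of support-overlap patterns, parallel to but longer than cases (I) and (II) of Lemma~\ref{t=2}.
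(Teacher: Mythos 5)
Your proposal follows essentially the same route as the paper's proof: induction on $t$ with Lemma \ref{t=2} as the base, cancellation of a shared edge whenever this does not destroy the hypothesis $L_1|^{edge}L_{i}^{(t)}$ so that the inductive hypothesis applies (the paper's case (I)(i)), and, in the residual situation, a direct support/degree analysis producing either a single-edge swap giving form (1) or the alternating run in which $L_1,L_{n-1},\dots,L_{n+1-2d}$ inside $L_i^{(t)}$ is traded for $L_n,L_{n-2},\dots,L_{n-2d}$ giving form (2), which is exactly the paper's cases (I)(ii) and (II) with its iterative determination of $d$. Your explicit formula for $L_k^{(t)}$ and the telescoping degree count verifying $(L_k^{(t)}:L_i^{(t)})=(x_{n-2d})$ coincide with (and are if anything more explicit than) the paper's construction, so the two arguments are the same in substance.
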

\begin{proof}
We prove the statement  by induction on $t$. The case where $t=1$ is trivial, and the case where $t=2$ follows from Lemma \ref{t=2}.
Now  we assume that $t\geq 3$. Let $L_{j}^{(t)}=L_{j_1}^{a_{j_1}}L_{j_2}^{a_{j_2}}\cdots L_{j_{k_j}}^{a_{j_{k_j}}}$ and
$L_{i}^{(t)}= L_{i_1}^{b_{i_1}}L_{i_2}^{b_{i_2}}\cdots L_{i_{k_i}}^{b_{i_{k_i}}}$, where  $1 \leq  j_1< \cdots < j_{k_{j}} \leq n$, $i_1=1$ and $2 \leq  i_2< \cdots < i_{k_{i}} \leq n$, and $\sum\limits_{\alpha=1}^{k_j}a_{j_{\alpha}}=\sum\limits_{\beta=1}^{k_i}b_{i_{\beta}}=t$  with  $a_{j_{\alpha}}$, $b_{i_{\beta}}>0$ for all $\alpha \in[k_j]$, $\beta \in[k_i]$.
 We consider the following two cases:

(I) If $L_{i_a}|^{edge}L_j^{(t)}$ for some $a\in[k_i]$. We also have two subcases:

(i) If $L_{i_1}|^{edge}\frac{L_{i}^{(t)}}{L_{i_a}}$ for some $a\in[k_i]$, then
$(L_j^{(t)}:L_i^{(t)})=(L_{j'}^{(t-1)}:L_{i'}^{(t-1)})$,
where $L_{j'}^{(t-1)}=\frac{L_{j}^{(t)}}{L_{i_a}}$ and $L_{i'}^{(t-1)}=\frac{L_{i}^{(t)}}{L_{i_a}}$.  By the induction hypothesis,  $(L_{j'}^{(t-1)}:L_{i'}^{(t-1)})\subseteq (L_{k'}^{(t-1)}:L_{i'}^{(t-1)})$ for some $L_{k'}^{(t-1)}\in L^{(t-1)}$ such that $L_{i'}^{(t-1)}>L_{k'}^{(t-1)}$
 and $(L_{k'}^{(t-1)}:L_{i'}^{(t-1)})$ has one of the following two forms:

(1) $(L_{k'}^{(t-1)}\!:\!L_{i'}^{(t-1)})=(L_{\ell_2}:L_{\ell_1})$ with  $L_{\ell_1}>L_{\ell_2}$,  $L_{\ell_2}|^{edge}L_{k'}^{(t-1)}$ and $L_{\ell_1}|^{edge}L_{i'}^{(t-1)}$.

(2) $(L_{k'}^{(t-1)}\!:\!L_{i'}^{(t-1)})=(x_{n-2d})$ for some   $d$ with $L_{n-2s}|^{edge}L_{k}^{(t)}$ and $L_{n+1-2s}|^{edge}L_{i}^{(t)}$ for $s=0,1,\ldots, d$.
In this case, we choose $L_k^{(t)}=L_{k'}^{(t-1)}L_{i_{a}}$, we get that  $L_{i}^{(t)}>L_{k}^{(t)}$ and $(L_{k}^{(t)}:L_{i}^{(t)})= (L_{k'}^{(t-1)}:L_{i'}^{(t-1)})$, as desired.

(ii) If for all  $L_{i_a}|^{edge}L_j^{(t)}$, we always have  $L_{i_1}\nmid\!^{edge}\frac{L_{i}^{(t)}}{L_{i_a}}$, then $j_{1}=1$ and $b_{j_1}=1$. Thus $a_{j_1}=1$ since  $L_{i}^{(t)}>L_{j}^{(t)}$.
Thus $L_{j}^{(t)}=L_{i_1}L_{j_2}^{a_{j_2}}\cdots L_{j_{k_j}}^{a_{j_{k_j}}}$,
$L_{i}^{(t)}= L_{i_1}L_{i_2}^{b_{i_2}}\cdots L_{i_{k_i}}^{b_{i_{k_i}}}$ with  $i_2,j_2\ge 2$. In this case, we can assume that $L_{j_\alpha}\nmid\!^{edge} \frac{L_{i}^{(t)}}{L_{i_1}}$ for all $2\le \alpha\le k_j$ and
$L_{i_\beta}\nmid\!^{edge} \frac{L_{j}^{(t)}}{L_{j_1}}$ for all $2\le \beta\le k_i$. Otherwise,  we can obtain the desired result by the induction hypothesis.

 Claim: $\deg_{x_{j_{\ell}+1}}(L_j^{(t)})>\deg_{x_{j_{\ell}+1}}(L_i^{(t)})$ for  some $2\leq\ell\leq k_j$.

 In fact, if  $\deg_{x_{j_\ell+1}}(L_j^{(t)})\leq\deg_{x_{j_\ell+1}}(L_i^{(t)})$ for all $2\leq\ell\leq k_j$, then  $L_{j_\ell+1}^{a_{j_\ell}}|^{edge}\frac{L_{i}^{(t)}}{L_{i_1}}$,
since  $\frac{L_{j}^{(t)}}{L_{i_1}}=L_{j_2}^{a_{j_2}}\cdots L_{j_{k_j}}^{a_{j_{k_j}}}$. Thus $\frac{L_{i}^{(t)}}{L_{i_1}}=L_{j_2+1}^{a_{j_2}}\cdots L_{j_{k_j}+1}^{a_{j_{k_j}}}$, since $\sum\limits_{\ell=2}^{k_j}a_{j_\ell}=t-1$. This contradicts the assumption that $L_{i}^{(t)}>L_{j}^{(t)}$.

  Therefore,  $L_{j_{\ell}'}|^{edge}L_{i}^{(t)}$ for some $j_{\ell}'\in\{j_{\ell}-1,j_{\ell}+1\}$, since $\supp(L_{j_{\ell}})\cap \supp(L_{i}^{(t)})\neq\emptyset$.
If  $L_{j_\ell-1}|^{edge}L_{i}^{(t)}$, then we choose $L_k^{(t)}=\frac{L_{i}^{(t)}}{L_{j_\ell-1}}L_{j_\ell}$. Thus   $(L_{k}^{(t)}:L_{i}^{(t)})=(L_{j_\ell}:L_{j_\ell-1})=(x_{j_\ell+1})$ and $(L_j^{(t)}:L_i^{(t)})\subseteq (x_{j_\ell+1})$ since $\deg_{x_{j_{\ell}+1}}(L_j^{(t)})>\deg_{x_{j_{\ell}+1}}(L_i^{(t)})$.
Otherwise, we have $L_{j_\ell-1}\!\nmid^{edge}\!L_{i}^{(t)}$ and $L_{j_\ell+1}|^{edge}L_{i}^{(t)}$. This forces $(L_j^{(t)}:L_i^{(t)})\subseteq L_{j_\ell}$. In this case, we  choose
\[
L_k^{(t)}=\left\{\begin{array}{ll}
		L_{j_\ell}\frac{L_{i}^{(t)}}{L_{i_2}}\ \ &\text{if $j_\ell=n$},\\
		L_{j_\ell}\frac{L_{i}^{(t)}}{L_{i_1}}\ \ &
		\text{else}.
	\end{array}\right.
\]
Thus  \[
(L_{k}^{(t)}:L_{i}^{(t)})=\left\{\begin{array}{ll}
		(L_{j_\ell}:L_{i_2})\ \ &\text{if $j_\ell=n$},\\
		(L_{j_\ell}:L_{i_1})\ \ &
		\text{else}.
	\end{array}\right.
\]

(II) If $L_{i_a}\!\nmid^{edge}\! L_j^{(t)}$ for any $a\in[k_i]$, then $j_1\ge 2$. The case where  $L_{j_\ell}|^{edge}L_{i}^{(t)}$ for some $\ell\in [t]$ follows from the condition that $L_{i_a}|^{edge}L_j^{(t)}$ for some $a\in [t]$. Now we assume that $L_{j_\ell}\nmid^{edge}L_{i}^{(t)}$ for all $\ell\in [t]$.
We have two subcases:

(i) If $\deg_{x_{j_{\ell}+1}}(L_j^{(t)})>\deg_{x_{j_{\ell}+1}}(L_i^{(t)})$ for  some $\ell\in[k_j]$, then the desired result holds by similar arguments as in the  case (ii) above.

(ii) If $\deg_{x_{j_\ell+1}}(L_j^{(t)})\leq\deg_{x_{j_\ell+1}}(L_i^{(t)})$ for all $\ell\in[k_j]$,  then  $L_{j_{\ell}+1}|^{edge}L_{i}^{(t)}$. Thus, we can write $L_{i}^{(t)}$ as $L_{i}^{(t)}= L_{j_1+1}^{a_{j_1}'}L_{j_2+1}^{a_{j_2}'}\cdots L_{j_{k_j+1}}^{a_{j_{k_j}}'}Q_i$ with $a_{j_\ell}'\geq a_{j_\ell}$ for all $\ell\in[k_j-1]$, where $Q_i$ is a monomial.  We claim that $j_{k_j}=n$. In fact, if $j_{k_j}\ne n$, then
$j_{k_j}+1\not\equiv 1\  (\text{mod\ }n)$, it follows that   $L_{j_{k_j}+1}\neq L_{i_1}$  and $a_{j_{k_j}}'\geq a_{j_{k_j}}$.
Note that $L_{j_{\ell}+1}\neq L_{i_1}$  for all $\ell\in[k_j]$, we have  $L_{i_1}|^{edge}Q_i$. In this case, we have  $\sum\limits_{\ell=1}^{k_j}a_{j_\ell}'+1\ge \sum\limits_{\ell=1}^{k_j}a_{j_\ell}+1=t+1$. This implies  $L_{i}^{(t)}\in \mathcal{G}(I(C_{\bf w}^n)^{t+1})$, a contradiction.
Therefore,  $L_n|^{edge}L_{j}^{(t)}$.
We consider the following two subcases:

(a) If $(L_j^{(t)}:L_i^{(t)})\subseteq (x_n)$, then we choose $L_k^{(t)}=L_{n}\frac{L_{i}^{(t)}}{L_{1}}$. Then   $(L_{k}^{(t)}:L_{i}^{(t)})=(L_{n}:L_{1})=(x_n)$.

(b) If $(L_j^{(t)}:L_i^{(t)})\not\subseteq (x_n)$, then  $x_{j_{k_j}}^{a_{j_{k_j}}}|L_i^{(t)}$  since $x_{j_{k_j}}^{a_{j_{k_j}}}|L_j^{(t)}$.  Note that $L_{j_{k_j}}|^{edge}\! L_j^{(t)}$ and $L_{i_a}\!\nmid^{edge}\! L_j^{(t)}$ for each $a\in[k_i]$, we have  $L_{j_{k_j}}\nmid^{edge}\! L_i^{(t)}$. Therefore, $L_{j_{k_j}-1}^{a_{j_{k_j}}}|^{edge}L_i^{(t)}$.

Claim:  $L_{j_{k_j}-2}|^{edge}L_j^{(t)}$. In fact, if $L_{j_{k_j}-2}\nmid^{edge}L_j^{(t)}$, we also write   $L_i^{(t)}$ as $L_i^{(t)}=L_{i_1}(\prod\limits_{\ell=1}^{k_j-1}L_{j_\ell+1}^{a'_{j_\ell}})L_{j_{k_j}-1}^{a_{j_{k_j}}}Q_i'$, where $Q_i'$ is a monomial.
By comparing the number of edges in the expression $L_i^{(t)}$, we get $t\geq\sum\limits_{\ell=1}^{k_j-1}a_{j_\ell}'+a_{j_{k_j}}+1\ge\sum\limits_{\ell=1}^{k_j}a_{j_\ell}+1=t+1$, a contradiction.
Therefore, $L_{j_{k_j}-2}|^{edge}L_j^{(t)}$. We can repeat the above discussion depending on  whether or not $(L_j^{(t)}:L_i^{(t)})$ is contained in $(x_{n-2d})$, where $d=0,1,\ldots$, This process can be performed only a finite number of times. And this concludes the proof.
\end{proof}

\begin{Theorem}\label{Ji}
	Let $t$ be a positive integer and    $L^{(t)}=\{L_1^{(t)}, L_2^{(t)}, \ldots, L_r^{(t)}\}$ be a totally ordered set of all elements of $\mathcal{G}(I(C_{{\bf w}}^n)^t)$ such that $L_1^{(t)}>\cdots>L_r^{(t)}$ and  $C=\{L_i^{(t)}: L_1|^{edge}L_i^{(t)} \text{\ for all\ }i\in[r]\}$. For each $i\in [c]$ with $c=|C|$, we write $L_i^{(t)}$ as $L_i^{(t)}=L_{i_1}^{a_{i_1}}\cdots L_{i_{k_i}}^{a_{i_{k_i}}}$ with  $1= i_1<\cdots < i_{k_i}\leq n$,
	$\sum\limits_{j=1}^{{k_i}}a_{i_j}=t$ and $a_{i_j}>0$ for each $1\leq j\leq k_i$. Suppose also that  $J_i=(L_{i+1}^{(t)}, \ldots, L_r^{(t)})$, $K_i=((L_2,\ldots,L_n)\!:\!L_1)+\sum\limits_{j=2}^{p_i}(L_{i_j+1}\!:\!L_{i_j})$ for each $i\in [c]$, where
	$p_i=\begin{cases}
		k_i-1,& \text{if $i_{k_i}=n$,}\\
		k_i,& \text{otherwise.}\\
	\end{cases}$ Then
$(J_i:L_{i}^{(t)})=K_i+Q_i$  for each $i\in [c]$, where  $Q_{i}=\sum\limits_{j=0}^{q_i}(x_{n-2j})$ and $q_i=\max\{\ell_i:L_{n+1-2s}|^{edge}L_{i}^{(t)}$ for all $0\leq s\leq \ell_i\}$.
\end{Theorem}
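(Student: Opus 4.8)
The plan is to establish the two inclusions $(J_i:L_i^{(t)})\subseteq K_i+Q_i$ and $(J_i:L_i^{(t)})\supseteq K_i+Q_i$ separately, after recording three preliminary observations. Since the order on $\mathcal{G}(I(C_{\bf w}^n)^t)$ is the lexicographic order on the exponent vectors $(a_1,\dots,a_n)$ of Theorem \ref{unique}, and $L_1|^{edge}L_i^{(t)}$ is equivalent to $a_1\ge 1$, the set $C$ is exactly the initial segment $\{L_1^{(t)},\dots,L_c^{(t)}\}$; in particular every $L_i^{(t)}$ with $i\in[c]$ has $i_1=1$. Next, since $J_i$ is a monomial ideal, $(J_i:L_i^{(t)})=\sum_{j>i}(L_j^{(t)}:L_i^{(t)})$, a sum of principal colons. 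Finally, and most importantly, $L_1$ is the unique edge of degree $>2$: every length-$t$ edge product $N=L_1^{c_1}\cdots L_n^{c_n}$ has total degree $2({\bf w}_1-1)c_1+2t$, so any minimal generator $L_{j'}^{(t)}$ dividing $N$ satisfies $a_1(L_{j'}^{(t)})\le c_1$, with equality forcing $L_{j'}^{(t)}=N$. I will use this dichotomy repeatedly to guarantee that a constructed witness, or its minimal-generator divisor, lies strictly below $L_i^{(t)}$ in the order.

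For the inclusion $(J_i:L_i^{(t)})\subseteq K_i+Q_i$ I would invoke Theorem \ref{Li}: for each $j>i$ there is a $k>i$ with $(L_j^{(t)}:L_i^{(t)})\subseteq(L_k^{(t)}:L_i^{(t)})$ of form (1) or (2). A form-(2) colon $(x_{n-2d})$ carries the requirement $L_{n+1-2s}|^{edge}L_i^{(t)}$ for $0\le s\le d$, which by the very definition of $q_i$ forces $d\le q_i$, whence $(x_{n-2d})\subseteq Q_i$. For a form-(1) colon $(L_{\ell_2}:L_{\ell_1})$ with $L_{\ell_1}|^{edge}L_i^{(t)}$ one has $\ell_1\in\{i_1,\dots,i_{k_i}\}$ and $\ell_1<\ell_2$; here I use the elementary fact $(L_2,\dots,L_n)\subseteq((L_2,\dots,L_n):L_1)$. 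If $\ell_1=1$, then $(L_{\ell_2}:L_1)\subseteq((L_2,\dots,L_n):L_1)\subseteq K_i$. If $\ell_1=i_j\ge2$, then $(L_{\ell_2}:L_{i_j})$ is either the single variable $(x_{i_j+2})$ (when $\ell_2=i_j+1$, a summand of $K_i$, the case $i_j=n$ being impossible since $L_n$ is the smallest edge and could not equal $L_{\ell_1}$ with $\ell_1<\ell_2$) or else equals $(L_{\ell_2})\subseteq(L_2,\dots,L_n)\subseteq K_i$. This accounts for every generator of the colon.

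For the reverse inclusion I would produce, for each generator $g$ of $K_i+Q_i$, an explicit length-$t$ edge product $N_g$ with $N_g\mid gL_i^{(t)}$ and $(N_g:L_i^{(t)})=(g)$, and conclude via the degree observation that the minimal generator dividing $N_g$ lies in $J_i$. For a generator of $((L_2,\dots,L_n):L_1)$ indexed by $m\ge 2$ I take $N_g=L_m\,(L_i^{(t)}/L_1)$, which drops the $L_1$-exponent by one, so $c_1<a_1(L_i^{(t)})$ and $(N_g:L_i^{(t)})=(L_m:L_1)$. For $(x_{i_j+2})$ with $2\le j\le p_i$ I take $N_g=L_{i_j+1}\,(L_i^{(t)}/L_{i_j})$: this keeps $c_1=a_1(L_i^{(t)})$ but strictly decreases the $i_j$-th coordinate, so $N_g<_{\lex}L_i^{(t)}$, and any proper minimal divisor then has strictly smaller $L_1$-exponent while equality forces $N_g$ itself to be minimal. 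For $(x_{n-2s})$ with $0\le s\le q_i$ I take the chain replacement $N_g=L_i^{(t)}\,\prod_{s'=0}^{s}L_{n-2s'}\big/\prod_{s'=0}^{s}L_{n+1-2s'}$, legitimate because $L_{n+1-2s'}|^{edge}L_i^{(t)}$ for all $s'\le q_i$; a telescoping cancellation collapses the ratio to $x_{n-2s}/(x_1^{{\bf w}_1-1}x_2^{{\bf w}_1})$, giving $(N_g:L_i^{(t)})=(x_{n-2s})$ and again $c_1<a_1(L_i^{(t)})$.

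The main obstacle I anticipate is the reverse inclusion, through two intertwined points. The first is carrying out the telescoping identity for the $Q_i$-witnesses cleanly, tracking that the numerator $x_1\prod_{k=n-2s}^{n}x_k$ and the denominator $x_1^{{\bf w}_1}x_2^{{\bf w}_1}\prod_{k=n+1-2s}^{n}x_k$ cancel down to a single variable. The second, more delicate, is that the witnesses $N_g$ need not themselves be minimal generators of $I(C_{\bf w}^n)^t$. This is exactly where the degree observation is essential: because $L_1$ is the only edge of degree exceeding $2$, a minimal generator dividing $N_g$ can never have a larger $L_1$-exponent than $N_g$, and equality of $L_1$-exponents forces equality of monomials; this dichotomy lets me conclude in every case that the relevant minimal-generator divisor is strictly below $L_i^{(t)}$, hence in $J_i$. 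By contrast, the form-(1) bookkeeping in the forward direction, and the verification that the excluded index $i_{k_i}=n$ produces no spurious generator, are routine once the roles of $p_i$ and $q_i$ are pinned down.
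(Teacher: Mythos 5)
Your proposal is correct and follows essentially the same route as the paper's proof: the inclusion $(J_i:L_i^{(t)})\subseteq K_i+Q_i$ is deduced from Theorem \ref{Li} with the identical case analysis on its two forms, and the reverse inclusion uses exactly the paper's witnesses $\frac{L_i^{(t)}}{L_1}L_b$, $\frac{L_i^{(t)}}{L_{i_j}}L_{i_j+1}$ and the chain replacements $T_d$, with your degree dichotomy spelling out the membership in $J_i$ that the paper compresses into ``by the choice of $p_i$ and $q_i$''. One slip worth noting: your claim that the $Q_i$-witnesses satisfy $c_1<a_1(L_i^{(t)})$ fails when the chain wraps around so that $L_1$ is among the added edges (e.g.\ $n=5$, $t=3$, $L_i^{(t)}=L_1L_2L_4$, $d=2$ gives the witness $L_1L_3L_5$), but this is harmless since in that case the witness is still lex-smaller than $L_i^{(t)}$ (first difference at the second coordinate), so your dichotomy still places it in $J_i$.
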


\begin{proof} If $t=1$, then $i=k_1=1$, since $L_1|^{edge}L_i$. The result is trivial. Now suppose $t\geq 2$. We can set $M_{b}=\frac{L_{i}^{(t)}}{L_{1}}L_{b}$ for each $2\leq b\leq n$, $N_{h}=\frac{L_{i}^{(t)}}{L_{i_{h}}}L_{i_{h}+1}$ for each $2\leq h\leq p_i$ and $T_{d}=\frac{L_{i}^{(t)}}{\prod\limits_{s=0}^{d}L_{n+1-2s}}\prod\limits_{s=0}^{d}L_{n-2s}$ for each $0\leq d\leq q_i$.  By the choice
of $p_i$ and $q_i$, we obtain  $\{M_{2},M_{3}\ldots,M_{n},N_{2},\ldots,N_{p_i},$ $ T_0,T_1,\ldots, T_{q_i}\}\subseteq J_{i}$. It follows that
\begin{eqnarray*}
	K_i+Q_i&=&((L_{2},\ldots, L_n):L_{1})+\sum\limits_{h=2}^{p_i}(L_{i_h+1}:L_{i_h}) +\sum\limits_{d=0}^{q_i}(x_{n-2d})\\
    &=&((M_{2},\ldots,M_{n}):L_{i}^{(t)})+((N_{2},\ldots,N_{p_i}):L_{i}^{(t)})
	+\sum\limits_{d=0}^{q_i}(T_{d}:L_{i}^{(t)})\\
	&=&((M_{2},\ldots,M_{n},N_{2},\ldots,N_{p_i},T_0,T_1,\ldots, T_{q_i}):L_{i}^{(t)})\subseteq (J_{i}:L_i^{(t)}).
\end{eqnarray*}
Note that $(J_{i}:L_{i}^{(t)})=((L_{i+1}^{(t)}, \ldots, L_r^{(t)}):L_{i}^{(t)})=\sum\limits_{j=i+1}^{r}(L_j^{(t)}:L_{i}^{(t)})$.  In order to prove $(J_{i}:L_i^{(t)})\subseteq K_i+Q_i$, it  suffices to prove $(L_j^{(t)}:L_{i}^{(t)})\subseteq  K_i+Q_i$ for each $i+1 \leq j \leq r$.  By Theorem
\ref{Li}, $(J_{i}:L_i^{(t)})$ has one of the following two cases:

(1) $(L_{j}^{(t)}\!:\!L_{i}^{(t)})\subseteq (L_{\ell_2}\!:\!L_{\ell_1})$, where $L_{\ell_1}>L_{\ell_2}$,  $L_{\ell_2}|^{edge}L_{j}^{(t)}$ and $L_{\ell_1}|^{edge}L_{i}^{(t)}$.  In this case, we have $\ell_{1}\leq i_{p_i}$. Note that if $\mbox{supp} (L_{\ell_1})\cap \mbox{supp} (L_{\ell_2})\neq\emptyset$,  then $\ell_{1}=\ell_2-1$ or $\ell_{1}=1$ and $ \ell_2=n$ since $L_{\ell_{1}}>L_{\ell_2}$.  Therefore, $(L_{\ell_2}:L_{\ell_1})\subseteq K_i$ by
\[
(L_j^{(t)}:L_{i}^{(t)})\subseteq (L_{\ell_2}:L_{\ell_1})=\left\{\begin{array}{ll}
	(L_{\ell_2}:L_{\ell_1}) \ &\text{if $\ell_{1}=\ell_2-1$,}\\
	(L_{n}:L_1) \ &\text{if $\ell_{1}=1$,~$\ell_2=n$,}\\
	(L_{\ell_2}) \ &\text{if $\mbox{supp} (L_{\ell_1})\cap \mbox{supp} (L_{\ell_2})=\emptyset$.}
\end{array}\right.
\]

(2)  $(L_j^{(t)}:L_{i}^{(t)})\subseteq (x_{n-2d})$ for some $d$ with  $L_{n-2s}|^{edge}L_{k}^{(t)}$ and  $L_{n+1-2s}|^{edge}L_{i}^{(t)}$, $s=0,1,\ldots, d$.  By the choice of $q_i$, we get $q_i \geq d$, which implies $(L_j^{(t)}:L_{i}^{(t)})\subseteq Q_i$.
\end{proof}

\begin{Theorem}
\label{n-cycle3} Under the assumptions of Setting \ref{1-edge-weighted},
we also assume that there is only one edge with non-trivial weights. Then, for all $t\ge 1$,
\[
\reg (S/I(C_{{\bf w}}^n)^t)=2\omega t+\lfloor \frac{n}{3} \rfloor-2
\]
where $\omega=\max\{{\bf w}_1,\ldots,{\bf w}_n\}$.
\end{Theorem}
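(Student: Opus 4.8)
The plan is to run a single descending chain of short exact sequences along the total order $L^{(t)}$ of Setting \ref{1-edge-weighted} and to extract both bounds from it at once, with no induction on $t$ beyond what is already packed into Theorems \ref{Li} and \ref{Ji}. Write $L^{(t)}=\{L_1^{(t)}>\cdots>L_r^{(t)}\}$ and $C=\{L_i^{(t)}:L_1|^{edge}L_i^{(t)}\}$. Since the order is lexicographic in the exponent vector with the $L_1$-exponent first, the members of $C$ are exactly the first $c=|C|$ generators, so $J_i:=(L_{i+1}^{(t)},\dots,L_r^{(t)})$ satisfies $J_0=I(C_{\bf w}^n)^t$ and $J_c=I(P_{\bf w}^n)^t$, where $P_{\bf w}^n$ is the trivial path on $n$ vertices obtained by deleting the unique non-trivial edge $e_1$. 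Writing $M_i:=\frac{S}{(J_i:L_i^{(t)})}(-d_i)$ with $d_i:=\deg L_i^{(t)}$, for each $i\in[c]$ the sequence
\[
0\longrightarrow M_i\xrightarrow{\ \cdot L_i^{(t)}\ }\frac{S}{J_i}\longrightarrow \frac{S}{J_{i-1}}\longrightarrow 0
\]
together with Lemma \ref{exact}(2) gives $\reg(S/J_{i-1})\le\max\{\reg(M_i)-1,\reg(S/J_i)\}$, and iterating yields $\reg(S/I^t)\le\max\{\max_{1\le i\le c}(\reg(M_i)-1),\reg(S/J_c)\}$.

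Next I would evaluate each summand using Theorem \ref{Ji}. Writing $L_i^{(t)}=L_1^{a_{i_1}}\cdots$ (well defined by Theorem \ref{unique}), a degree count gives $d_i=2t+2(\omega-1)a_{i_1}$. By Theorem \ref{Ji} one has $(J_i:L_i^{(t)})=K_i+Q_i$, and since every $(L_{i_j+1}:L_{i_j})$ with $i_j\ge 2$ and every generator of $Q_i$ is a single variable, $K_i+Q_i$ equals $((L_2,\dots,L_n):L_1)$ with finitely many variables adjoined. A direct computation shows $((L_2,\dots,L_n):L_1)=(x_3,x_n)+I(P')$, where $P'$ is the trivial path on $x_4,\dots,x_{n-1}$; hence $\reg\bigl(S/((L_2,\dots,L_n):L_1)\bigr)=\lfloor (n-3)/3\rfloor=\lfloor n/3\rfloor-1$ by Lemma \ref{path}. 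Since adjoining a variable can only drop this quantity (Lemma \ref{var}), we get $\reg(S/(J_i:L_i^{(t)}))\le\lfloor n/3\rfloor-1$ for every $i$, with equality at $i=1$, where $L_1^{(t)}=L_1^{t}$, $Q_1=(x_n)$ and the sum in $K_1$ is empty.

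Finally I would show that the $i=1$ term strictly dominates, which simultaneously pins down the value and upgrades the bound to an equality. At $i=1$ we obtain $\reg(M_1)-1=(\lfloor n/3\rfloor-1)+2\omega t-1=2\omega t+\lfloor n/3\rfloor-2=:R$. For $i\ge 2$ the generator $L_i^{(t)}$ is not $L_1^{t}$, so $a_{i_1}\le t-1$, and the slack is $R-(\reg(M_i)-1)\ge 2(\omega-1)(t-a_{i_1})\ge 2(\omega-1)\ge 2$; moreover $\reg(S/J_c)=2(t-1)+\lfloor (n+1)/3\rfloor$ satisfies $R-\reg(S/J_c)\ge 2t(\omega-1)+\lfloor n/3\rfloor-\lfloor(n+1)/3\rfloor\ge 1$ because $\omega\ge 2$. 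Hence $\reg(S/J_1)\le\max\{\max_{i\ge 2}(\reg(M_i)-1),\reg(S/J_c)\}<R<R+1=\reg(M_1)$. In particular $\reg(M_1)\ne\reg(S/J_1)$, so the equality clause of Lemma \ref{exact}(2) applies to the $i=1$ sequence and gives $\reg(S/I^t)=\reg(S/J_0)=\reg(M_1)-1=2\omega t+\lfloor n/3\rfloor-2$.

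The main obstacle is entirely front-loaded into Theorem \ref{Ji}: once the colon ideals $(J_i:L_i^{(t)})$ are described as a single fixed ``path colon'' $((L_2,\dots,L_n):L_1)$ with extra variables adjoined, everything that remains is degree bookkeeping plus the monotonicity of regularity under adding variables. I expect the only delicate points to be the exact identification of $((L_2,\dots,L_n):L_1)$ and verifying the strict gaps $R-(\reg(M_i)-1)\ge 2$ for $i\ge 2$ and $R-\reg(S/J_c)\ge 1$; but the uniform estimate $2(\omega-1)\ge 2$ settles both, including the small values of $n$ allowed under Setting \ref{1-edge-weighted} and Corollary \ref{cycleinteg}. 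Note that the argument treats $t=1$ on the same footing, recovering Theorems \ref{smalln} and \ref{bign}.
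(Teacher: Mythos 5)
Your proof is correct and follows essentially the same route as the paper: the same descending chain of ideals $J_i$ indexed by the ordered generators divisible by $L_1$, the same identification of each colon $(J_i:L_i^{(t)})$ via Theorem \ref{Ji} as the fixed path-colon $((L_2,\dots,L_n):L_1)=(x_3,x_n)+(x_4x_5,\dots,x_{n-2}x_{n-1})$ with extra variables adjoined (handled by Lemmas \ref{path} and \ref{var}), and the same application of Lemma \ref{exact} to the resulting short exact sequences. Your explicit degree bookkeeping and the strict-gap argument that triggers the equality clause of Lemma \ref{exact}(2) at the $i=1$ sequence simply spell out what the paper compresses into its final sentence.
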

\begin{proof}
Let $I=I(C_{{\bf w}}^n)$ and $L^{(t)}=\{L_1^{(t)}, L_2^{(t)}, \ldots, L_r^{(t)}\}$ be a totally ordered set of all elements of $\mathcal{G}(I(C_{{\bf w}}^n)^t)$. Let $C=\{L_i^{(t)}: L_1|^{edge}L_i^{(t)}\ \text{for\ }i\in[r]\}$.
 For each $i\in [c]$ with $c=|C|$, we write $L_i^{(t)}$ as $L_i^{(t)}=L_{i_1}^{a_{i_1}}\cdots L_{i_{k_i}}^{a_{i_{k_i}}}$ with  $1= i_1<\cdots < i_{k_i}\leq n$,
	$\sum\limits_{j=1}^{{k_i}}a_{i_j}=t$ and $a_{i_j}>0$ for each $1\leq j\leq k_i$.  Suppose also that  $J_i=(L_{i+1}^{(t)}, \ldots, L_r^{(t)})$ for each $i\in [c]$,  Consider the exact sequences:
\begin{gather*}
	\begin{matrix}
		0 & \longrightarrow & \frac{S}{(J_1:L_{1}^{(t)})}(-s_1)  & \stackrel{ \cdot L_{1}^{(t)}} \longrightarrow  & \frac{S}{J_1} & \longrightarrow & \frac{S}{I^t} & \longrightarrow & 0 \\
		0 & \longrightarrow & \frac{S}{(J_2:L_{2}^{(t)})}(-s_2)  & \stackrel{ \cdot L_{2}^{(t)}} \longrightarrow  & \frac{S}{J_2} & \longrightarrow & \frac{S}{J_1} & \longrightarrow & 0  \\
		0 & \longrightarrow & \frac{S}{(J_3:L_{3}^{(t)})}(-s_3) & \stackrel{ \cdot L_{3}^{(t)}} \longrightarrow & \frac{S}{J_3} &\longrightarrow & \frac{S}{J_2} & \longrightarrow & 0 \\
		&  &\vdots&  &\vdots&  &\vdots&  \\
		0&  \longrightarrow & \frac{S}{(J_{c}:L_{c}^{(t)})}(-s_c) & \stackrel{ \cdot L_{c-1}^{(t)}} \longrightarrow & \frac{S}{J_{c}}& \longrightarrow & \frac{S}{J_{c-1}}& \longrightarrow & 0
	\end{matrix}
\end{gather*}
where $s_i$ is the degree of $L_{i}^{(t)}$ for each $i\in [c]$. Thus $s_1=2w_1t$ and $2\omega+2(t-1)\leq s_i\leq 2\omega(t-1)+2$ for each $2\leq i\leq c$.
Note that  $J_c=I(P_{{\bf w}}^n)^t$ and $I(P_{{\bf w}}^n)=(x_2x_3,x_3x_4,\ldots,x_{n-1}x_n,x_nx_1)$. For each  $i\in [c]$, by Theorem \ref{Ji}, we have $(J_i:L_{i}^{(t)})=(x_3,x_n)+(x_4x_5,x_5x_6,\ldots,x_{n-2}x_{n-1})+\sum\limits_{j=2}^{p_i}(x_{i_j+2})+\sum\limits_{j=0}^{q_i}(x_{n-2j})$, where  $q_i=\max\{\ell_i: L_{n+1-2s}|^{edge}L_{i}^{(t)} \text{ for all } 0\leq s\leq \ell_i\}$ and $p_i=\begin{cases}
	k_i-1,& \text{if $i_{k_i}=n$,}\\
	k_i,& \text{otherwise.}
\end{cases}$ \\
Note that if  $i=1$, then $p_1=k_1=1$ and $q_1=0$. Thus $(J_1:L_{1}^{(t)})=(x_3,x_n)+(x_4x_5,x_5x_6,\ldots,x_{n-2}x_{n-1})$. By Lemma  \ref{path}, we get $\reg(S/(J_1:L_{1}^{(t)}))=\lfloor\frac{n}{3}\rfloor-1$  and
$\reg(S/J_c)=\lfloor\frac{n+1}{3}\rfloor+2(t-1)\leq2\omega t+\lfloor\frac{n}{3}\rfloor-2$ since $J_c=(x_2x_3,x_3x_4,\ldots,x_{n-1}x_n,x_nx_1)^t$. On the other hand, we can see from  Lemma  \ref{var} that
 $\reg(S/(J_i:L_{i}^{(t)}))\leq \reg(S/(J_1:L_{1}^{(t)}))=\lfloor\frac{n}{3}\rfloor-1$ for all $i\in [c]$.  Thus, the desired results follow from Lemma \ref{exact} and the above exact sequences.
\end{proof}

\medskip
\hspace{-6mm} {\bf Acknowledgments}

 \vspace{3mm}
\hspace{-6mm}  This research is supported by the Natural Science Foundation of Jiangsu Province (No. BK20221353). The authors are grateful to the software systems \cite{Co} and \cite{GS}
 for providing us with a large number of examples.


\begin{thebibliography}{99}
	
\bibitem{Ba} A. Banerjee, The regularity of powers of edge ideals, {\it J. Algebr. Comb.}, 41 (2015), 303-321.



\bibitem{BHT} S. Beyarslan, H. T. H\`a and T. N. Trung, Regularity of powers of forests and cycles, {\it J. Algebr. Comb.}, 42 (2015),  1077-1095.


\bibitem{BH} W. Bruns, J. Herzog, {\it  Cohen-Macaulay rings}, Revised Edition, Cambridge University Press, 1998.


\bibitem{Co} CoCoATeam, CoCoA: a system for doing computations in Commutative Algebra, Avaible at http://cocoa.dima.unige.it.

\bibitem{Con} A. Conca, Regularity jumps for powers of ideals,   In Commutative algebra, volume 244 of  {\it Lect. Notes Pure Appl. Math.},  (2006), 21--32.

\bibitem{CH}A. Conca and  J. Herzog,  Castelnuovo-Mumford regularity of products of ideals,  {\it Collect. Math.}, 54 (2) (2003), 137-152.


\bibitem{CHT} S. D. Cutkosky, J. Herzog and N. V. Trung, Asymptotic behaviour of the Castelnuovo-Mumford regularity, {\it Compos. Math.} 118 (3) (1999), 243-261.

\bibitem{DMV}  L. T. K. Diem, N. C. Minh, and T. Vu, The sequentially Cohen-Macaulay property of edge ideals of edge-weighted graphs, arXiv:2308.05020.

\bibitem{DZCL}  S. Y. Duan, G. J. Zhu, Y. J. Cui and J. X. Li, Integral closure and normality of edge ideals of some edge-weighted graphs, arXiv:2308.06016.

\bibitem{FSTY} S. A. Seyed Fakhari, K. Shibata, N. Terai and  Siamak Yassemi, Cohen-Macaulay edge weighted edge ideals of very well-covered graphs,
 {\it Comm. Algebra} 49(10) (2021),  4249-4257.

\bibitem{F} S. Faridi, Monomial ideals via squarefree monomial ideals, {\it  Comm. Algebra}, 244 (2006), 85-114.

\bibitem{FM} L. Fouli and  S. Morey,  A lower bound for depths of powers of edge ideals, {\it J. Algebr Comb.}, 42 (2015), 829-848.



\bibitem{FHT} C. A. Francisco, H. T. H\`{a} and A. Van Tuyl,  Splittings of monomial ideals, {\it  Proc. Amer. Math.Soc.},  137, (2009), 3271-3282.



\bibitem{GS} D. R. Grayson and M. E. Stillman, {\it Macaulay2}, a software system for research in algebraic geometry. Available at https://math.uiuc.edu/Macaulay2/.







\bibitem{Hi}  T. T. Hien, Cohen-Macaulay edge-weighted graphs of girth 5 or greater, arXiv:2309.05056.

\bibitem{HH} J. Herzog and T. Hibi,  {\it  Monomial ideals}, New York, NY, USA: Springer-Verlag, 2011.



\bibitem{HT} L. T. Hoa and N. D. Tam, On some invariants of a mixed product of ideals, {\it Arch. Math}, 94 (4) (2010), 327-337.



\bibitem{LZD} J. X. Li, G. J. Zhu, S. Y. Duan, Powers of edge ideals of edge-weighted trees, arXiv:2403.03609.



\bibitem{K} V. Kodiyalam, Asymptotic behaviour of Castelnuovo-Mumford regularity, {\it Proc. Amer. Math. Soc.}, 128 (1999), 407-411.




\bibitem{MRW} W. F. Moore, M. Rogers  and S. Sather-Wagstaff,  Monomial ideals and their decompositions,  Universitext. Springer, Cham, 2018.




\bibitem{MV} S. Morey and R. H. Villarreal, {\it Edge ideals: algebraic and combinatorial properties}, in Progress
in Commutative Algebra, Combinatorics and Homology, Vol. 1 (C. Francisco, L. C. Klingler,
S. Sather-Wagstaff, and J. C. Vassilev, Eds.), De Gruyter, Berlin, 2012, 85-126.




\bibitem{M} A. Mousivand, Algebraic properties of product of graphs,  {\it Comm. Algebra}, 40 (11) (2012),  4177-4194.




\bibitem{PS} C. Paulsen and S. Sather-Wagstaff, Edge ideals of weighted graphs, {\it J. Algebra Appl.}, 12 (2013), 1250223-1-24.





\bibitem{WZX} H. wang, G. J. Zhu and L. Xu,  Algebraic properties of powers of edge ideals of vertex-weighted oriented cycles, {\it Algebra Colloq.}, 30(4), (2023) 649-666.


\bibitem{W} S.  Wei, Cohen-Macaulay weighted chordal graphs, arXiv:2309.02414.


\bibitem{Zh1}  G. J. Zhu, Projective dimension and regularity of the path ideal of the line graph, {\it J. Algebra Appl.}, 17(4), (2018), 1850068-1-15.


\bibitem{Zh2}  G. J. Zhu, Projective dimension and regularity of path ideals of cycles, {\it J. Algebra Appl.}, 17(10) (2018), 1850188-1-22.


\bibitem{ZDCL}  G. J. Zhu, S. Y. Duan, Y. J. Cui and J. X. Li, Edge ideals of some edge-weighted graphs, To appear in {\it Rocky MT J. Math.}, arXiv:2401.02111.





\end{thebibliography}
\end{document}